\renewcommand{\O}{\mbox{$\mathcal{O}$}}
\renewcommand{\sO}{\cO}
\newcommand{\oD}{\overline{D}}
\newcommand{\oX}{\overline{X}}
\newcommand{\Hint}[1]{\vskip 3pt \indent \emph{Hint:} {#1}}
\DeclareMathOperator{\coeff}{{coeff}}
\DeclareMathOperator{\can}{{can}}
\DeclareMathOperator{\depth}{{depth}}
\DeclareMathOperator{\tr}{{tr}}
\renewcommand\section{\@startsection {section}{1}{\z@}%
                                   {-3.5ex \@plus -1ex \@minus -.2ex}%
                                   {2.3ex \@plus.2ex}%
                                   {\centering\large\normalfont\bfseries\scshape}}
\renewcommand\subsubsection{\@startsection{subsubsection}{3}{\z@}%
                                     {-3.25ex\@plus -1ex \@minus -.2ex}%
                                     {1.5ex \@plus .2ex}%
                                     {\normalfont \scshape}}
\title{Positive Characteristic Algebraic Geometry}
\author{Zsolt Patakfalvi}
\address{EPFL SB MATHGEOM CAG\\ MA B3 444 (B\^atiment MA)\\ Station 8\\ CH-1015 Lausanne\\ Switzerland}
\email{zsolt.patakfalvi@epfl.ch}
\author{Karl Schwede}
\address{Department of Mathematics\\ The University of Utah\\ 155 S 1400 E Room 233 \\Salt Lake City, UT, 84112, USA}
\email{schwede@math.utah.edu}
\author{Kevin Tucker}
\address{Department of Mathematics, Statistics and Computer Science \\ University of Illinois at Chicago \\ 322 Science and Engineering Offices (M/C 249), 851 S. Morgan Street, Chicago, IL 60607-7045, USA}
\email{kftucker@uic.edu}
\thanks{The current version of these notes were written for the The Boot Camp for the 2015 Algebraic Geometry Summer Research Institute, which was supported by NSF DMS \# 1500652.    The first version of these notes arose from a ``Positive Characteristic Algebraic Geometry Workshop'' held at the University of Illinois at Chicago in March 2014, supported by NSF RTG grant \#1246844 and UIC.}
\thanks{The second author was partially supported by NSF FRG Grant DMS \#1265261/1501115 , NSF CAREER Grant DMS \#1252860/1501102 and a Sloan
  Fellowship.}
\thanks{The third author was partially supported by NSF grant DMS \#1419448/1602070, and a Sloan fellowship.}
\begin{document}
\bibliographystyle{skalpha}

\maketitle

\tableofcontents

\section{Introduction}

 The goal of these notes is to give a geometric introduction to recent methods utilizing the Frobenius morphism (see  \cite{SchwedeTuckerTestIdealSurvey} for more algebraic aspects) in higher dimensional algebraic geometry. These methods have been used extensively to move arguments from characteristic zero to positive characteristic. The main obstacle in doing so is usually some theorems of either analytic or p-adic origin that are known not to hold in positive characteristic: e.g., Kodaira-vanishing \cite{Raynaud_Contre_exemple_au_vanishing_theorem}, Kawamata-Viehweg vanishing \cite{XieCounterexamplesToKVVanishingOnRuledSurfaces}, some semi-positivity statements \cite[3.2]{Moret_Bailly_Familles_de_courbes_et_de_varietes_abeliennes_sur_P_1_II_exemples}, etc. The method of Frobenius splitting gives a weak replacement to  these theorems. We refer to the introduction of
\autoref{sec:global} for a list of global geometry results obtained in the past few years using the method of Frobenius split (including the existence of minimal models for threefolds).

As the goal of the notes is to serve as an introduction to the subject, we do not give  the proofs of the above mentioned global results (except one example in \autoref{sec:global}).  Instead, we aim for an introductory display of  the techniques used in the field. In particular, the notes could serve the basis of a introductory course to the subject. Further, we list many open questions, which we hope will be attacked soon by the readers of this article.   We would like to draw attention to the many exercises contained in the notes. We believe that the best approach to reading this article is to solve the exercises.

Throughout the notes we assume a basic knowledge of the language of $\bQ$-divisors (in the form in which it is widely used in birational geometry, e.g., \cite{Kollar_Mori_Birational_geometry_of_algebraic_varieties}), reflexive sheaves (c.f., \cite{Hartshorne_Generalized_divisors_on_Gorenstein_schemes,Hartshorne_Stable_reflexive_sheaves}), and Grothendieck duality for finite maps (c.f., \cite{Hartshorne_Algebraic_geometry} \cite{Hartshorne_Residues_and_duality}). A big part of the assumed background knowledge is discussed in the appendices of \cite{Schwede_Tucker_A_survey_of_test_ideals}.
\vskip 12pt
\noindent
{\it Acknowledgements:}  We would like to thank Andrew Bydlon, Eric Canton, Alberto Chiecchio, Hong R. Zong and the anonymous referee for pointing out typos in these notes.

\section{Frobenius Splittings}

The main tool we have to study a scheme $X$ with
characteristic $p > 0$ is the Frobenius endomorphism $F: X \to X$ or
$p$-th power map. Similarly, the $e$-iterated Frobenius $F^{e} : X \to
X$ is always the identity on the underlying topological space of $X$
determined by taking $p^{e}$-th powers on functions.  It is often
convenient to think of Frobenius as the morphism of sheaves of $\sO_{X}$-modules $\sO_{X} \to
F^{e}_{*}\sO_{X}$.

We say that $X$ is \emph{$F$-finite} if the Frobenius endomorphism is a
finite morphism, or equivalently $F^{e}_{*}\sO_{X}$ is coherent for
some or all $e \in \N$. One checks easily that this is the case in
essentially every geometric (or even arithmetic) situation of
interest, and \emph{$F$-finite       will be a standing assumption throughout this survey.}

\begin{exercise}
  Check that a quasi-projective variety over a perfect field is always $F$-finite.
\end{exercise}

\begin{exercise}
  When $X = \Spec(R)$ is affine and $M$ is a finitely generated
  $R$-module, justify the notation $F^{e}_{*}M$ for the module given
  by restriction of scalars for the $p^{e}$-th power map on $R$.
\end{exercise}

Note that $F^{e}_{*}$ is always an exact functor, and for any coherent
sheaf $\sM$ on $X$ we have $H^{i}(X, F^{e}_{*}\sM) = H^{i}(X, \sM)$
(though it is perhaps more accurate to write $F^{e}_{*}H^{i}(X, \sM)$
to keep track of the linearity).

\begin{theorem}[Kunz]
$X$ is regular if and only if Frobenius is flat, or equivalently (as $X$ is assumed $F$-finite)
$F^{e}_{*}\sO_{X}$ is locally free for some or all $e \in \N$.
\end{theorem}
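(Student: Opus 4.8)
The plan is to reduce at once to the affine local case $X=\Spec R$ with $(R,\mathfrak{m},k)$ Noetherian local and $F$-finite (both hypotheses are local and survive localization), and then prove the two implications separately. For the ``some or all'' clause: if $F_{*}R\cong R^{n}$ then iterating and using additivity of $F_{*}$ gives $F^{e}_{*}R\cong R^{n^{e}}$ for every $e$, while conversely the argument below goes through verbatim with $p$ replaced by $p^{e}$, so it is enough to treat $e=1$. Observe also that, since $F$ carries $\mathfrak{m}$ into $\mathfrak{m}$, saying $F_{*}R$ is flat over $R$ is the same as saying $F$ is a faithfully flat local homomorphism.

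For regular $\Rightarrow$ flat I would run the following one-line Koszul computation. Pick a regular system of parameters $x_{1},\dots,x_{d}$, so that the Koszul complex $K_{\bullet}(x_{1},\dots,x_{d})$ is a free resolution of $k$ and $\Tor^{R}_{i}(k,F_{*}R)=H_{i}\bigl(K_{\bullet}(x_{1},\dots,x_{d})\otimes_{R}F_{*}R\bigr)$. Since multiplication by $x_{j}$ on the $R$-module $F_{*}R$ is multiplication by $x_{j}^{p}$ inside $R$, this complex equals $F_{*}\bigl(K_{\bullet}(x_{1}^{p},\dots,x_{d}^{p})\bigr)$; and a regular local ring is Cohen--Macaulay, so $x_{1}^{p},\dots,x_{d}^{p}$ --- again a system of parameters --- is a regular sequence and its Koszul complex is acyclic in positive degrees. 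Hence $\Tor^{R}_{i}(k,F_{*}R)=0$ for $i>0$, so $F_{*}R$ is flat by the local criterion, hence free, being finitely generated over a local ring.

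For flat $\Rightarrow$ regular, assume $F_{*}R\cong R^{n}$. Being a flat local homomorphism, $F$ is faithfully flat, hence injective, so $R$ is reduced. I would then induct on $d=\dim R$. For $d=0$ the ring is Artinian: applying the exact functor $F_{*}$ to a composition series shows $\ell_{R}(F_{*}M)=[k:k^{p}]\,\ell_{R}(M)$ for every finite-length $M$ (the factor $R/\mathfrak{m}$ is sent to something killed by $\mathfrak{m}$ of $k$-dimension $[k:k^{p}]$), so $M=R$ gives $n=[k:k^{p}]$, while $n=\rank_{R}F_{*}R=\dim_{k}(F_{*}R\otimes_{R}k)=\ell_{R}\bigl(F_{*}(R/\mathfrak{m}^{[p]})\bigr)=[k:k^{p}]\,\ell_{R}(R/\mathfrak{m}^{[p]})$, where $\mathfrak{m}^{[p]}$ is the ideal generated by the $p$-th powers of a generating set; hence $\ell_{R}(R/\mathfrak{m}^{[p]})=1$, i.e.\ $\mathfrak{m}^{[p]}=\mathfrak{m}$, and since $\mathfrak{m}^{[p]}\subseteq\mathfrak{m}^{2}$ Nakayama gives $\mathfrak{m}=0$, so $R=k$ is regular. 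For $d\ge 1$: $R$ is reduced of positive dimension, so $\mathfrak{m}$ lies in neither $\mathfrak{m}^{2}$ nor any of the (finitely many) minimal primes, and prime avoidance produces a nonzerodivisor $x\in\mathfrak{m}\setminus\mathfrak{m}^{2}$; then $R/xR$ has dimension $d-1$ and embedding dimension $(\dim_{k}\mathfrak{m}/\mathfrak{m}^{2})-1$, so it would suffice to show $R/xR$ is again $F$-flat --- for then, by induction, $R/xR$ is regular, forcing $\dim_{k}\mathfrak{m}/\mathfrak{m}^{2}=d$, i.e.\ $R$ regular.

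The crux, and the step I expect to be hardest, is this last descent of $F$-flatness to $R/xR$. What is immediate is only that $F_{*}R\otimes_{R}(R/xR)\cong F_{*}(R/x^{p}R)$ is flat (even free) over $R/xR$, whereas one needs flatness of the Frobenius pushforward of $R/xR$ itself; bridging the gap calls for a dévissage along the short exact sequences $0\to F_{*}(x^{j-1}R/x^{j}R)\to F_{*}(R/x^{j}R)\to F_{*}(R/x^{j-1}R)\to 0$ for $1\le j\le p$, which must absorb the mismatch between the ideal $(x)$ in the source of Frobenius and its $p$-th root in the target --- precisely where Kunz's original argument does its real work. An alternative that avoids the descent: the length identities above, iterated, together with $F^{e}_{*}R\otimes_{R}(R/\mathfrak{q})\cong F^{e}_{*}(R/\mathfrak{q}^{[p^{e}]})$ for parameter ideals $\mathfrak{q}$ and a comparison of lengths on both sides, give $\ell_{R}(R/\mathfrak{q}^{[p^{e}]})=\ell_{R}(R/\mathfrak{m}^{[p^{e}]})\cdot\ell_{R}(R/\mathfrak{q})$, whence (feeding $\mathfrak{q}^{[p^{f}]}$ back in) $\ell_{R}(R/\mathfrak{m}^{[p^{e}]})=c^{e}$ with $c=\ell_{R}(R/\mathfrak{m}^{[p]})$; the Hilbert--Samuel sandwich $\mathfrak{m}^{O(p^{e})}\subseteq\mathfrak{m}^{[p^{e}]}\subseteq\mathfrak{m}^{p^{e}}$ then forces $c=p^{d}$, i.e.\ the Hilbert--Kunz multiplicity of $R$ is $1$; localizing the free module $F_{*}R$ at each minimal prime together with Kunz's formula for $p$-degrees shows $R$ is unmixed, and the Watanabe--Yoshida theorem (an unmixed local ring of Hilbert--Kunz multiplicity $1$ is regular) finishes the proof --- at the cost of that deeper input. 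Everything outside the chosen hard step is routine bookkeeping with Koszul complexes, lengths, and ranks of free modules.
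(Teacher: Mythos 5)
The paper states Kunz's theorem but does not prove it, so there is no ``paper's proof'' to compare against; I'll evaluate your argument on its own.

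Your easy direction (regular $\Rightarrow$ flat) is clean and correct: identifying $K_{\bullet}(x_1,\dots,x_d)\otimes_R F_*R$ with $F_*K_{\bullet}(x_1^p,\dots,x_d^p)$ and invoking Cohen--Macaulayness of a regular local ring to get Koszul acyclicity, then the local criterion, is exactly the modern short proof. (In your $d=0$ base case the displayed identity $n=[k:k^p]\,\ell_R(M)$ should read $n=[k:k^p]$ --- the $\ell_R(R)$ on both sides of $n\,\ell_R(R)=[k:k^p]\,\ell_R(R)$ cancels --- but you've already shown $R$ is reduced, and a reduced Artinian local ring is a field, so the length bookkeeping is redundant anyway.)

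The hard direction, however, has a genuine gap that your proposal acknowledges but does not close. You reduce to showing that $F$-flatness of $R$ descends to $R/xR$ for a nonzerodivisor $x\in\bm\setminus\bm^2$. This is the crux, and it does not follow from formal nonsense: what flatness of $F_*R$ gives you directly is that $F_*R\otimes_R R/xR\cong F_*(R/x^pR)$ is free over $R/xR$, which is an extension of $p$ copies of $F_*^{R/xR}(R/xR)$ (via the filtration $x^jR/x^pR$), but an extension being free does not force the graded pieces to be flat. Absorbing this mismatch between $(x)$ and its $p$-th root $(x^p)$ is precisely where Kunz's original argument does real work (via completion, Cohen's structure theorem, and a length comparison against a regular presentation), and the d\'evissage you describe does not by itself produce the needed $\Tor$ vanishing. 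Your alternative route --- establishing $\ell(R/\bm^{[p^e]})=p^{ed}$ via the Hilbert--Samuel sandwich (that part is fine) and then invoking Watanabe--Yoshida for unmixed rings of Hilbert--Kunz multiplicity one --- is not circular, but it uses a substantially deeper theorem that postdates Kunz by three decades and whose proof is harder than Kunz's theorem itself, and the unmixedness verification is itself only gestured at. Either way, the implication ``flat $\Rightarrow$ regular'' is not actually established by what you have written: you have correctly located the hard step, but a proof of it is missing.
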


Recall that the terms regular and smooth are interchangeable for
varieties over perfect fields.  In this case, we can think of the
iterates of Frobenius as an infinite flag
\[
\sO_{X} \subseteq F_{*}\sO_{X} \subseteq F^{2}_{*}\sO_{X} \subseteq
\cdots \subseteq F^{e}_{*}\sO_{X} \subseteq \cdots
\]
where $F^{e}_{*}\sO_{X}$ is a vector bundle of rank $p^{e(\dim X)}$.

\begin{exercise}
  Check this for $X = \bA^{n}_{k}$ where $k$ is a perfect field of
  positive characteristic.  In particular, if $S = k[x_{1}, \ldots,
  x_{n}]$ we have that $F^{e}_{*}S$ is the free $S$-module of rank
  $p^{en}$ on the basis of monomials where the exponents on each of
  the variables is strictly less than $p^{e}$.
\end{exercise}

\begin{exercise}
  Assume $k = k^{p}$ once more.  Recall that any vector bundle on
  $\bP^{1}_{k}$ is isomorphic to a direct sum of line bundles
  $\sO_{\bP^{1}}(a)$ for $a \in \Z$. Identify $F^{e}_{*} \sO_{\bP^{1}}$ as a direct sum of line bundles.
\end{exercise}

Note that the key idea for the previous exercise is to make use of the
projection formula:  for a line bundle $\sL$ and coherent sheaf $\sM$
on $X$, we have
\[
\sL \otimes_{\sO_{X}} F^{e}_{*} \sM = F^{e}_{*}\left((F^{e}{}^{*}\sL)
  \otimes_{\sO_{X}} \sM \right) = F^{e}_{*}\left( \sL^{p^{e}}
  \otimes_{\sO_{X}} \sM \right).
\]

\begin{definition}
 We say that $X$ is globally $F$-split (or simply $F$-split) if
 $\sO_{X} \to F^{e}_{*}\sO_{X}$ has a global splitting (in the
 category of $\sO_{X}$-modules).  We say that $X$ is locally $F$-split
 (or simply $F$-pure) if $\sO_{X} \to F^{e}_{*}\sO_{X}$ splits in a
 sufficiently small affine neighborhood of any point $x \in X$.
\end{definition}

\begin{example}
A smooth variety is always locally $F$-split, but may or may not be
globally $F$-split.  Our next goal is to explore what happens for
smooth projective curves, where the situation depends on the genus.
Genus zero curves are always $F$-split, genus one curves are sometimes
$F$-split, and higher genus curves are never $F$-split.
\end{example}

\begin{exercise}
  Show that locally $F$-split and globally $F$-split are the same for
  affine varieties.
\end{exercise}

\begin{lemma}
\label{lemmain2.12}
Say that $X$ is a smooth variety over $k = k^{p}$.  Then
\[
\sHom_{\sO_{X}}(F^{e}_{*}\sO_{X}, \sO_{X}) = F^{e}_{*}\sO_{X}((1-p^{e})K_{X}).
\]
\end{lemma}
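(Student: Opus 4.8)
The plan is to use Grothendieck duality for the finite flat morphism $F^e \colon X \to X$. The key identity is that for a finite morphism $f \colon Y \to Z$ of smooth varieties, one has $f^{!} \sO_Z = \sHom_{\sO_Z}(f_* \sO_Y, \sO_Z)$ twisted appropriately; concretely, duality gives a natural isomorphism $\sHom_{\sO_Z}(f_* \sF, \sO_Z) \cong f_*\sHom_{\sO_Y}(\sF, f^{!}\sO_Z)$ for coherent $\sF$ on $Y$, and the relative dualizing sheaf is $\omega_{Y/Z} = \omega_Y \otimes f^* \omega_Z^{-1}$. Applying this with $f = F^e$ and $\sF = \sO_X$, I get
\[
\sHom_{\sO_X}(F^e_* \sO_X, \sO_X) \cong F^e_* \omega_{X/X} = F^e_*\big(\omega_X \otimes (F^e)^* \omega_X^{-1}\big).
\]
Since $\omega_X = \sO_X(K_X)$ and $(F^e)^* \sO_X(K_X) = \sO_X(p^e K_X)$ (the pullback along Frobenius multiplies divisors by $p^e$, which one sees from the projection-formula computation $(F^e)^* \sL = \sL^{p^e}$ recorded just above the statement), this simplifies to $F^e_* \sO_X((1-p^e)K_X)$, which is exactly the claimed formula.

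First I would set up the duality statement precisely: $F^e$ is finite (by $F$-finiteness) and flat (by Kunz's theorem, since $X$ is regular), so duality for finite morphisms applies cleanly and $\omega_{X/X}$ is an honest line bundle. Then I would invoke the sheaf-level duality isomorphism $\sHom_{\sO_X}(F^e_*\sO_X, \sO_X) \cong F^e_* \sHom_{\sO_X}(\sO_X, \omega_{X/X}) = F^e_* \omega_{X/X}$. Next I would identify $\omega_{X/X} = \omega_X \otimes (F^e)^*\omega_X^{-1}$ and compute the Frobenius pullback of a line bundle using the projection formula identity already established in the excerpt. Finally I would translate everything back into divisor notation to land on $F^e_*\sO_X((1-p^e)K_X)$.

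The main obstacle — really the only subtle point — is pinning down the duality isomorphism with the correct identification of the relative dualizing sheaf, i.e., making sure the twist by $(1-p^e)K_X$ (as opposed to, say, $(p^e - 1)K_X$) comes out with the right sign. This is where one must be careful that $\omega_{X/X}$ is $\omega_X \otimes (F^e)^*\omega_X^{-1}$ and not its inverse, and that $(F^e)^*$ acts on divisor classes by multiplication by $p^e$. An alternative, more hands-on route that avoids the full machinery: work locally where $\omega_X$ is trivial, use the explicit free-module description of $F^e_* \sO_X$ on a monomial basis (from the $\bA^n$ exercise above) to compute the dual module directly together with its $F^e_* \sO_X$-module structure, and check the twisting by examining how transition functions transform under Frobenius. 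Either way the computation is short once the duality input is correctly stated.
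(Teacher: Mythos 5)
Your proof is correct and rests on the same two pillars as the paper's (Grothendieck duality for the finite flat morphism $F^e$, plus the projection-formula computation $(F^e)^*\sL = \sL^{p^e}$), but it is organized in a noticeably different way. You apply duality directly with $\sO_X$ as the target, which forces you to identify the upper-shriek $(F^e)^!\sO_X$ with the relative dualizing sheaf $\omega_{X/X} = \omega_X \otimes (F^e)^*\omega_X^{-1}$ and then compute the Frobenius pullback. The paper instead uses a ``twist and untwist'' trick first: it rewrites $\sHom_{\sO_X}(F^e_*\sO_X, \sO_X)$ as $\sHom_{\sO_X}(F^e_*\sO_X(p^e K_X), \omega_X)$ (tensoring both arguments by $\omega_X$ and pushing the twist inside $F^e_*$ via the projection formula), precisely so that it only needs the tautological case $(F^e)^!\omega_X = \omega_X$ of duality, stated as ``$F^e_*$ commutes with $\sHom_{\sO_X}(\blank, \omega_X)$.'' What the paper's route buys is that you never have to unwind the general $f^!$ formalism; what your route buys is that you see the answer as the pushforward of a canonical object ($\omega_{X/X}$), which makes the sign/direction of the twist conceptually transparent rather than something to be tracked through a computation. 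Both are valid, and both of you would need to flag the same subtle point: getting $(1-p^e)K_X$ rather than $(p^e-1)K_X$ depends on knowing which direction the relative dualizing sheaf (resp.\ the twist) goes.
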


\begin{proof}[Proof ingredients]
  You will need but two tricks.  First, twisting and untwisting by
  $\omega_{X}$, we have
\[
\sHom_{\sO_{X}}(F^{e}_{*}\sO_{X}, \sO_{X}) =
\sHom_{\sO_{X}}(F^{e}_{*}\sO_{X}(p^{e}K_{X}), \sO_{X}(K_{X})).
\]
The second trick is to use duality for a finite morphism, which can be
thought of as simply saying that $F^{e}_{*}(\blank)$ commutes with the
Grothendieck-duality functor $\sHom_{\sO_{X}}(\blank, \omega_{X})$.
In other words, for a coherent sheaf $\sM$ on $X$, we have
\[
\sHom_{\sO_{X}}(F^{e}_{*}\sM, \omega_{X}) =
F^{e}_{*}\sHom_{\sO_{X}}(\sM, \omega_{X}).
\]

\vspace{-.25in}
\end{proof}

\begin{exercise}
  Prove the Lemma.  Then explain how to do the same thing when $X$ is
  only normal, instead of smooth, by restricting to the smooth locus
  and using that all of the sheaves in play are reflexive.
\end{exercise}

\begin{exercise}
  If $X$ is locally $F$-split, show it is reduced and weakly normal\footnote{A reduced ring of equal characteristic $p > 0$ is called \emph{weakly normal} if for every $r \in K(R)$ the total ring of fractions of $R$ such that $r^p \in R$ also satisfies $r \in R$.}.
\end{exercise}

\begin{example}
  If $X$ is $F$-split, then it follows from Lemma~\ref{lemmain2.12} that $-nK_{X}$
  is effective for some $n \in \N$.  In particular, if $X$ has general
  type (\textit{e.g.} a smooth projective curve of genus at least two), it is never globally $F$-split.
\end{example}

\begin{lemma}
  Suppose that $X$ is a projective variety over $k = k^{p}$.  Then $X$
  is $F$-split if and only if the section ring
\[
R(X,A) = \bigoplus_{m \geq 0} H^{0}(X, \sO_{X}(mA))
\]
with respect to some (or equivalently any) ample divisor $A$ on $X$ is $F$-split.
\end{lemma}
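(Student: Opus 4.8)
The statement relates global $F$-splitting of a projective variety $X$ to $F$-splitting of its section ring $R = R(X,A)$. The plan is to translate the splitting condition on both sides into a statement about the map $\sO \to F^e_*\sO$ and its splittings, and then to use the fact that the section ring sees all the cohomological data of $X$. Let me fix notation: let $R = \bigoplus_{m \ge 0} H^0(X, \sO_X(mA))$ with its natural grading, and let $\bm = \bigoplus_{m > 0} R_m$ be the irrelevant maximal ideal. Recall $F$-splitting of the ring $R$ means the inclusion $R \hookrightarrow F^e_*R$ splits as $R$-modules; since $R$ is local (graded-local) this is equivalent to $R$ being $F$-split in the sense of the definition after restricting to the affine cone minus nothing, and by the earlier exercise, for affine rings local and global $F$-splitting coincide.

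\textbf{Step 1: Reduce the splitting of $\sO_X \to F^e_*\sO_X$ to a statement about a single global section.} By the Lemma computing $\sHom_{\sO_X}(F^e_*\sO_X, \sO_X) \iso F^e_*\sO_X((1-p^e)K_X)$ (using its normal-variety extension from the exercise), a splitting of $\sO_X \to F^e_*\sO_X$ corresponds to an element $\phi \in H^0(X, F^e_*\sO_X((1-p^e)K_X)) = H^0(X, \sO_X((1-p^e)K_X))$ whose evaluation at $1$ is the identity — i.e. $\phi(1) = 1$. The first task is to record this dictionary carefully, including that "$\phi$ splits" is an open/linear-algebra condition on $\phi$.

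\textbf{Step 2: Do the same computation on the cone.} The affine cone $\Spec R$ (or rather the punctured cone / the scheme $\mathbf{Spec}_X \bigoplus_m \sO_X(mA)$) has a canonical module that can be computed from $\omega_X$ and $A$; an $F$-splitting of $R$ likewise corresponds to a graded map $F^e_*R \to R$ sending $1 \mapsto 1$, i.e. an element of $\Hom_R(F^e_*R, R)$, and by graded local duality this Hom-module is a sum of pieces $H^0(X, \sO_X(mA + (1-p^e)K_X))$ over various twists $m$. The plan is to show the degree-zero part of "$\Hom_R(F^e_*R,R)$, evaluated at $1$" is exactly $H^0(X, \sO_X((1-p^e)K_X))$ — the same group as in Step 1 — so that a splitting on one side manufactures a splitting on the other. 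One direction is easy: a splitting $\phi$ of $\sO_X \to F^e_*\sO_X$ induces, degreewise, maps $F^e_*\sO_X(mA) \to \sO_X(mA)$ (twist $\phi$ by $\sO_X(mA)$ and use that $F^{e*}\sO_X(mA) = \sO_X(p^e m A)$, combined with the inclusion $\sO_X(mA) \hookrightarrow \sO_X(p^e m A)$ coming from... wait, that inclusion goes the wrong way). Let me instead say: $\phi$ twisted by $\sO_X(mA)$ gives $F^e_*\sO_X(p^e m A) \to \sO_X(mA)$, and precomposing with $F^e_*$ of the inclusion $\sO_X(mA) \hookrightarrow \sO_X(p^e m A)$ (valid since $A$ is effective — or rather, we should choose $A$ very ample and actually this requires care) yields the graded splitting on $R$. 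Conversely a graded splitting of $R$ restricted to degree-preserving-on-sections pieces gives back $\phi$ by taking global Proj / sheafifying.

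\textbf{The main obstacle.} The genuinely delicate point is the bookkeeping in Step 2: making the "some or equivalently any ample $A$" clause work, and handling the fact that $R$ need not be normal or Cohen–Macaulay (so $\Spec R$ may be badly singular away from being the cone over a nice $X$) — one must use that $F$-splitting only depends on the behavior in codimension... no, $F$-splitting is not a codimension-one condition, so one cannot just throw away bad loci. The cleanest route is probably to use the equivalence (for graded rings) between $F$-splittings of $R$ and Frobenius splittings that are compatible with the grading, together with the identification $\widetilde{F^e_*R} = \bigoplus_m F^e_*\sO_X(mA)$ on $\Proj R = X$, reducing everything to: a graded map $F^e_*R \to R$ sending $1 \to 1$ exists iff the degree-zero sheaf map $F^e_*\sO_X \to \sO_X$ sending $1 \to 1$ exists. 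I expect the forward direction (from $X$ to $R$) to be the harder one, since one must produce a genuine ring-theoretic splitting compatible with all graded pieces simultaneously from a single sheaf splitting, using the projection formula $\sL \otimes F^e_*\sM = F^e_*(\sL^{p^e} \otimes \sM)$ repeatedly to assemble the pieces; the reverse direction follows by applying the $\widetilde{(-)}$ functor and looking in degree zero.
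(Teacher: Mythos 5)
You've correctly identified the right ingredients — the projection formula, the graded structure of $F^e_*R$, and the $\widetilde{(\cdot)}$ functor for the reverse direction — and your Step 1 dictionary is a reasonable (if unnecessary) framing. But there is a genuine error in the forward direction of Step 2, and it is precisely the subtlety the paper is trying to flag.

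You flagged that "the inclusion goes the wrong way" and then proposed to fix it by precomposing the twisted map $F^e_*\sO_X(p^e m A) \to \sO_X(mA)$ with $F^e_*$ of an inclusion $\sO_X(mA) \hookrightarrow \sO_X(p^e m A)$. This is not a fix; it is the wrong thing to want. The point of the paper's exercise is that $F^e_*R$ is naturally a $\tfrac{1}{p^e}\Z$-graded $R$-module, with $[F^e_*R]_m = F^e_* R_{mp^e} = F^e_* H^0(X,\sO_X(mp^e A))$ for integer $m$, and the Frobenius $R \to F^e_*R$, $r\mapsto F^e_* r^{p^e}$, is degree-preserving in this grading (sending $R_m$ into $F^e_*R_{mp^e}$, \emph{not} into $F^e_*R_m$). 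So what you need to split degree $m$ is exactly a map $F^e_*H^0(X,\sO_X(mp^eA)) \to H^0(X,\sO_X(mA))$, and this is \emph{already} what $H^0$ of the twisted splitting $\phi\otimes\sO_X(mA) : F^e_*\sO_X(mp^eA)\to\sO_X(mA)$ gives you — with no effectivity of $A$ needed and no precomposition. Your proposed precomposition would instead produce a map $F^e_*R_m \to R_m$ (in the \emph{original} $\Z$-grading), which does not split the Frobenius on $R$ at all, since the Frobenius does not land in $F^e_*R_m$. Assembling the correct degreewise maps, extending by zero on the fractional-degree summands $[F^e_*R]_{i/p^e \bmod \Z}$ for $0<i<p^e$, gives the graded $R$-linear splitting; $R$-linearity should then be checked and follows from the projection formula identification.

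Two smaller remarks. First, "graded local duality on the cone" is heavier machinery than needed — the dictionary between graded splittings and sheaf splittings is purely formal once you have the grading straight, and invoking duality here obscures the real issue. Second, your worry about $R$ failing to be normal or Cohen–Macaulay is not the obstruction: the argument runs entirely through the graded structure (and the companion exercise's equivalence between graded $F$-splitting, $F$-splitting, and $F$-splitting of $R_{\bm}$), so no normality is used.
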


\begin{proof}[Proof ideas]
  The main idea of the proof is straightforward enough -- twisting a
  given $F$-splitting by $\sO_{X}(mA)$ gives a splitting of
  $\sO_{X}(mA) \to F^{e}_{*}\sO_{X}(mp^{e}A)$ for any $m \geq 0$.
  Taking global sections and keeping track of the gradings as in the
  exercise below, it is then easy to see why $R(X,A)$ must be
  $F$-split.  For the other direction, we need only take $\Proj$ of a
  graded $F$-splitting.  Be careful though, if $M = \oplus_{n\in\bZ} \Gamma(X, \mathcal{M} \otimes \O_X(nA))$ then $F_* M$ is not generally the same as $\oplus_{n\in\bZ} \Gamma(X, (F_* \mathcal{M}) \otimes \O_X(nA))$.
\end{proof}

\begin{exercise}
  Suppose that $X = \Spec(R)$, where $R = \bigoplus_{m \geq 0} R_{m}$ is a
  standard-graded $k$-algebra ($R_{0} = k$ and $R = k[R_{1}]$).  For
  a $\Z$-graded $R$-module $M$, explain how to think of $F^{e}_{*}M$
  as a $\frac{1}{p^{e}}\Z$-graded $R$-module.  In particular, show
  that we can break up $F^{e}_{*}M$ into a direct sum of $\Z$-graded
  $R$-modules
\[
F^{e}_{*}M = \bigoplus_{0 \leq i < p^{e}} [F^{e}_{*}M]_{i/p^{e}
  \; \mathrm{mod} \; \Z}
\]
where $[F^{e}_{*}M]_{i/p^{e}
  \; \mathrm{mod} \; \Z} = \bigoplus_{m \in \Z} M_{i + mp^{e}}$ as an
abelian group, and the elements of $R$ act via $p^{e}$-th powers.
Finally, show that $R$ is $F$-split if and only if it has a graded
$F$-splitting, and if and only if $R_{\bm}$ is $F$-split where $\bm =
R_{+}$ is the homogeneous maximal ideal.
\end{exercise}

\begin{exercise}
  If $R \subseteq S$ is a split inclusion of rings and $S$ is
  $F$-split, then conclude $R$ is also $F$-split.  In particular,
  conclude a Veronese subring of a graded $F$-split ring is always $F$-split.
\end{exercise}

\begin{proposition}[Fedder's Criterion]
\label{prop:Fedder_s_criterion}
Let $S = k[x_{0}, \ldots, x_{n}] \supseteq I$ homogeneous, $\mathfrak{m} = \langle x_0, \ldots, x_{n} \rangle$, and $R =
S/I$.  Then $R$ is $F$-split if and only if $(I^{[p]}:I) \not\subseteq
\bm^{[p]}$, where $J^{[p]} = \langle j^{p} | j \in J \rangle$ denotes the
$p$-th bracket or Frobenius power of an ideal $J$.
\end{proposition}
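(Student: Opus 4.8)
The statement relates global $F$-splitting of $R = S/I$ to an explicit colon ideal condition. By the earlier exercise on graded rings, $R$ is $F$-split if and only if the local ring at the homogeneous maximal ideal $\mathfrak{m} = R_+$ is $F$-split, so I may work locally at $\mathfrak{m}$; equivalently, after localizing $S$ at its homogeneous maximal ideal (which I will again denote $\mathfrak m$). Write $e=1$ for notational ease (the general $e$ is identical with $p$ replaced by $p^e$, $I^{[p]}$ by $I^{[p^e]}$, etc.). The key algebraic input is the identification, for the regular ring $S = k[x_0,\dots,x_n]$ with $k = k^p$, of $\sHom_S(F_*S, S)$ as a free rank-one $F_*S$-module. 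Indeed, from the Lemma above (with $K_S$ trivial since $S$ is a polynomial ring) we get $\sHom_S(F_*S,S) \cong F_*S\cdot \Phi$, where $\Phi$ is the "trace-like" generator dual to the top monomial $x_0^{p-1}\cdots x_n^{p-1}$ in the monomial basis of $F_*S$ over $S$; every $S$-linear map $F_*S\to S$ is $\Phi(F_* z\cdot -)$ for a unique $z\in S$.

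**Reducing $F$-splitting of $R$ to a condition on $\Phi$.** First I would observe that an $S$-linear splitting of $\sO_R\to F_*\sO_R$ is the same as an $S$-linear map $\varphi\colon F_*R\to R$ with $\varphi(F_*1)=1$; and a standard argument (apply $\sHom_R(-,R)$, or lift along the surjection $S\to R$) shows such $\varphi$ exist precisely when there is an $S$-linear $\psi\colon F_*S\to S$ with $\psi(F_*I)\subseteq I$ and $\psi(F_*1)=1$. Writing $\psi(-)=\Phi(F_*z\cdot-)$, the compatibility condition $\psi(F_*I)\subseteq I$ translates, via the projection-formula manipulation $\Phi(F_*(z\,\cdot))$ applied to elements of $I$, into the requirement that $z\in (I^{[p]}:_S I)$: this is exactly the classical computation that $\Phi(F_*(g\cdot I))\subseteq I$ for all appropriate $g$ iff $g\cdot I\subseteq I^{[p]}$, i.e. $g\in(I^{[p]}:I)$, using that $\Phi$ restricted to $F_*I^{[p]}=F_*(I^{[p]})$ lands in $I$ and that $\Phi$ is surjective modulo such considerations.

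**The surjectivity condition and Nakayama.** It then remains to see that among such $z\in(I^{[p]}:I)$ one can additionally arrange $\psi(F_*1)=1$ after localizing at $\mathfrak m$. Here the point is that $\psi(F_*1) = \Phi(F_*z)$, and $\Phi(F_*z)$ is a unit in $S_{\mathfrak m}$ precisely when $z\notin \mathfrak m^{[p]}$ — because $\Phi$ picks out the coefficient of the top monomial $x_0^{p-1}\cdots x_n^{p-1}$, so $\Phi(F_*z)\in\mathfrak m$ exactly when $z$ has no term divisible by that monomial, i.e. $z\in\mathfrak m^{[p]}$. Thus a compatible splitting exists iff $(I^{[p]}:I)\not\subseteq \mathfrak m^{[p]}$, which is the asserted criterion. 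I would use Nakayama / the graded structure to pass between "splits after localizing at $\mathfrak m$" and "splits globally as graded modules," invoking the earlier exercise.

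**Main obstacle.** The routine parts are the duality bookkeeping. The step that needs genuine care is the passage from "$R$ is $F$-split" to "there is an $S$-linear $\psi\colon F_*S\to S$ with $\psi(F_*I)\subseteq I$ lifting the splitting": one must check that every splitting of $F_*R\to R$ lifts to such a $\psi$ (not just that a $\psi$ descends), which uses that $F_*S$ is free over $S$ — hence $\sHom_S(F_*S,-)$ is exact — so that $\sHom_S(F_*S,S)\to\sHom_S(F_*S,R)$ is surjective, and then one adjusts the lift to be $I$-compatible using $I\cdot\sHom_S(F_*S,S) \leftrightarrow$ maps factoring through $I$. Getting this lifting/compatibility correspondence exactly right, together with the identification $\psi(F_*1)=\Phi(F_*z)$ and its unit criterion, is the crux; everything else is formal manipulation with the projection formula and the monomial basis of $F_*S$.
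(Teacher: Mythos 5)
Your overall architecture matches the paper's: lift splittings between $R$ and $S$ using freeness of $F_*S$, identify $\sHom_S(F_*S,S)\cong F_*S$ via the trace-type generator $\Phi$, and translate compatibility into membership in $(I^{[p]}:I)$. That is all correct and essentially the intended route. There is, however, a genuine error in the last step, in the direction ``$(I^{[p]}:I)\not\subseteq\bm^{[p]}\Rightarrow R$ is $F$-split.''

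You assert that $\Phi(F_*z)$ is a unit in $S_{\bm}$ precisely when $z\notin\bm^{[p]}$, on the grounds that $\Phi$ ``picks out the coefficient of the top monomial,'' and you identify ``$z$ has no term divisible by $x_0^{p-1}\cdots x_n^{p-1}$'' with ``$z\in\bm^{[p]}$.'' Both steps of this equivalence break. The constant term of $\Phi(F_*z)$ is the coefficient of the single monomial $x_0^{p-1}\cdots x_n^{p-1}$ in $z$; this vanishing is strictly weaker than $z\in\bm^{[p]}$. For a concrete counterexample take $z=1$ (or $z=x_0^{p-1}$ when $n\geq 1$): then $z\notin\bm^{[p]}$, yet $\Phi(F_*z)=0$, so $\psi(F_*1)$ is not a unit and your argument stalls. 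Only the implication ``$\Phi(F_*z)$ a unit $\Rightarrow z\notin\bm^{[p]}$'' is valid, which is the forward direction of Fedder's criterion; the converse direction is where the gap lies.

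The repair is what the paper's sketch does: what $z\notin\bm^{[p]}$ actually buys you is that $\psi=\Phi(F_*z\cdot-)$ is \emph{surjective} at $\bm$, not that $\psi(F_*1)$ itself is a unit. One sees this by Nakayama applied to $\sHom_S(F_*S,S)$: under the isomorphism $\sHom_S(F_*S,S)\cong F_*S$ sending $\Phi(F_*z\cdot-)\mapsto F_*z$, the submodule $\bm\cdot\sHom_S(F_*S,S)$ corresponds to $F_*\bm^{[p]}$ (because $S$ acts on the $F_*S$-side through $p$th powers), so $\psi$ is surjective at $\bm$ iff $z\notin\bm^{[p]}$. A surjective compatible map then descends to a surjection $F_*R\to R$, and precomposing with multiplication by any $F_*\bar r$ mapping to $1$ gives an honest splitting. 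Equivalently and more concretely: since $z\notin\bm^{[p]}$, $z$ has a monomial $x^a$ with all $a_i<p$; replacing $z$ by $x^{(p-1,\dots,p-1)-a}\,z$ (which stays in the ideal $(I^{[p]}:I)$) produces an element whose $x_0^{p-1}\cdots x_n^{p-1}$-coefficient is nonzero, and for \emph{that} $z$ your unit claim is true. Either way the criterion holds, but as written your final equivalence is wrong and needs this correction.
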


\begin{example}
  $R = k[x,y,z] / \langle f = x^{3} + y^{3} + z^{3} \rangle$ is $F$-split
  if and only if $p$ is congruent to 1 modulo 3.  Indeed, one easily
  checks that these are precisely the characteristics for which
  $f^{p-1} \not\in \langle x^{p}, y^{p}, z^{p} \rangle$.
\end{example}

\begin{example}
  Suppose that $X$ is a smooth genus one curve over $k = \overline
  k$.  Recall that $X$ is said to be ordinary or have Hasse invariant
  one if the action of Frobenius on $H^{1}(X, \sO_{X})$, namely
\[
F^{*} : H^{1}(X, \sO_{X}) \to H^{1}(X, \sO_{X}),
\]
 is injective.  In this case, having chosen a base point, $X$ has precisely $p^{e}$ distinct
 $p^{e}$-torsion points for the group law on $X$.

In contrast, when the action of Frobenius on $H^{1}(X, \sO_{X})$ is
not injective (and hence identically zero), we say that $X$ is
supersingular and has Hasse invariant zero.  In this case, having
chosen a base point, the
identity element is the only $p^{e}$-torsion point for the group law.

In Theorem 4.21 of Hartshorne, it is shown that $X$ is ordinary if
and only if, when embedded as a cubic hypersurface $X = \bV(f)
\subseteq \bP^{2}$, the term $(xyz)^{p-1}$ appears with a nonzero
coefficient in $f^{p-1}$.  By Fedder's criterion, it follows that $X$
is $F$-split if and only if $X$ is ordinary.
\end{example}

\begin{exercise}
  Suppose $X$ is a smooth projective genus one curve.  First, recall Atiyah's
classification of indecomposable vector bundles on $X$.  For each $r >
0$, there exists a unique indecomposable vector bundle $E_{r}$ of rank
$r$ and degree zero with $H^{0}(X, E_{r}) \neq 0$.  Moreover, one can
construct the $E_{r}$ inductively by setting $E_{1} = \sO_{X}$ and
letting $E_{r+1}$ be the unique nontrivial extension
\[
0 \to E_{r-1} \to E_{r} \to \sO_{X} \to 0.
\]
Any other indecomposable vector bundle of rank $r$ and degree zero has
the form $E_{r} \tensor \sL$ for a uniquely determined line bundle
$\sL$ of degree zero.

Use this classification to determine the structure of $F^{e}_{*}\sO_{X}$.
If $X$ is
  ordinary and $x_{0} \in X$ is a fixed base point for the group law,
  let $x_{0}, \ldots, x_{p^{e}-1}$ be the $p^{e}$ distinct
  $p^{e}$-torsion points.  Show that $F^{e}_{*}\sO_{X} = \bigoplus_{0
    \leq i < p^{e}} \sO_{X}(x_{i}-x_{0})$.
In contrast, when $X$ is supersingular, show that $F^{e}_{*}\sO_{X}$
is the unique indecomposable vector bundle of rank $p^{e}$ with degree
zero having a nonzero global section.
\end{exercise}

\begin{proof}[Fedder's Criterion Proof Ideas]
  We sketch the two main ideas of the proof, and leave the audience to
  flesh out the details.

First, one must show that every $\phi \in \Hom_{R}(F_{*}R,R)$ is a
quotient of a $\tilde \phi \in \Hom_{S}(F_{*}S,S)$.  Conversely,
$\tilde \psi \in \Hom_{S}(F_{*}S,S)$ gives rise to a map $\psi \in
\Hom_{R}(F_{*}R,R)$ whenever $I$ is $\tilde\psi$-compatible
(\textit{i.e.} $\tilde\psi(F_{*}I) \subseteq I$).  The key point here
is that $F_{*}S$ is a free $S$-module, which allows one to lift
such (potential) splittings from $R$ up to $S$.
The second main idea of the proof is to once again make use of
duality.   We have that
\[
\Hom_{S}(F_{*}\omega_{S}, \omega_{S}) = F_{*}\Hom_{S}(\omega_{S}, \omega_{S})
= F_{*}S.
\]
In other words, identifying $S = \omega_{S}$, every element of $\Hom_{S}(F_{*}S,S)$ has the form
$\Phi_{S}(F_{*}x \cdot \blank)$ for a uniquely determined element $x
\in S$.  (One can take $\Phi_{S}$ to be the trace of Frobenius as
discussed below.)

Putting these together, if we start with a splitting $\phi$ of $R$, we
can lift it to $\tilde \phi$ on $S$.  Then $\tilde \phi (\blank) =
\Phi_{S}(F_{*}c \cdot \blank)$ for some $c \in S$.  As
$\tilde\phi(F_{*}I) \subseteq I$, one can conclude $c \in
(I^{[p]}:I)$.  As $\tilde\phi$ must be surjective since $\phi$ was a
splitting, one can also conclude $c \not\in \langle x_{0}^{p}, \ldots,
x_{n}^{p}\rangle$.  Conversely, if we can start by taking $c \in
(I^{[p]}:I) \setminus \bm^{[p]}$, one can simply take $\Phi_{S}(F_{*}c
\cdot \blank)$ and reverse the process to get a surjective map in
$\Hom_{R}(F_{*}R,R)$ -- which is just as good as having an honest F-splitting.
\end{proof}

\begin{exercise}
 Show that the dual of the Frobenius map $F^{e} : \sO_{X} \to F^{e}_{*}\sO_{X}$ of a normal variety
 takes the form $\Tr_{F^{e}} : F^{e}_{*}\omega_{X} =
 \sHom_{\sO_{X}}(F^{e}_{*}\sO_{X}, \omega_X)\to \omega_{X}$ given by
 evaluation at one.
 We call this the \emph{trace} of $F^{e}$.  Show that $\Tr_{F^{e}}$
 generates $\sHom_{\sO_{X}}(F^{e}_{*}\omega_{X}, \omega_{X})$ as an
 $F_{*}^{e}\sO_{X}$-module.

In case $X = \bA^{n} = \Spec(S = k[x_{1}, \ldots, x_{n}])$, show also that one may take the projection onto
the free summand generated by $(x_{1}\cdots x_{n})^{p-1}$ as a
$F^{e}_{*}S$-generator of $\Hom_{S}(F^{e}_{*}S,S)$.  Conclude this
agrees with the trace of $F^{e}$ up to (pre)multiplication by a unit.
\end{exercise}

\begin{exercise}
  For $X$ smooth, recall the Cartier isomorphism for the algebraic De
  Rham complex $\Omega_{X}^{\mydot}$, namely
  $\sH^{i}(F^{e}_{*}\Omega_{X}^{\mydot}) \simeq \Omega_{X}^{i}$.
  Write this down explicitly for $\bA^{n}$ in coordinates, and check
  that the induced map
\[
F^{e}_{*}\omega_{X} \to \sH^{n}(F^{e}_{*}\Omega_{X}^{\mydot}) \simeq \omega_{X}
\]
agrees (up to unit) with the trace of $F^{e}$.
\end{exercise}

\begin{exercise}
  If $A$ is a finitely generated $\Z$-algebra and $\bm$ is a
  maximal ideal of $A$, show that $A/\bm$ is a finite field.
\end{exercise}

\begin{definition}
  Suppose that $X$ is a complex algebraic variety.  One can form a
  finitely generated $\Z$-algebra domain $A$ and an arithmetic family
  $\sX \to \Spec(A)$ so that $\sX_{0} \otimes_{\Frac(A)} \bC = X$.  We
 call $\sX$  a model of $X$ over $\Spec(A)$.  $X$ is said to have
 dense (local or global, respectively) $F$-split type if $\sX_{\bm}$
 is (local or global, respectively) $F$-split for a dense set of
 closed points $\bm \in \Spec(A)$.
\end{definition}

\begin{conjecture}
  If $X$ is a complex Abelian variety (or more generally log
  Calabi-Yau), then $X$ has dense $F$-split type.
\end{conjecture}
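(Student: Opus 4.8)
The plan is to reduce the conjecture to an arithmetic statement about Abelian varieties -- which is itself only partially known -- and, for the general case, to the structure theory of log Calabi-Yau varieties. Recall first that for an Abelian variety $A$ over a perfect field of characteristic $p>0$, being globally $F$-split is equivalent to being \emph{ordinary}, i.e.\ to Frobenius acting injectively (hence bijectively) on $H^1(A,\sO_A)$, equivalently $\#A[p](\bar k)=p^{\dim A}$: if $A$ is $F$-split then $\sO_A$ is a direct summand of $F_{*}\sO_A$, so the induced Frobenius $F^{*}$ on $H^1(A,\sO_A)$ is split injective; conversely, if $A$ is ordinary then $F\colon A\to A^{(p)}\cong A$ has multiplicative kernel and $F_{*}\sO_A\cong\bigoplus_i\sL_i$ with the $\sL_i$ degree-zero line bundles, one of them trivial -- exactly as in the exercise above for ordinary genus-one curves. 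So, spreading a complex Abelian variety $X=A$ out to a model which, after shrinking $\Spec R$, is an Abelian scheme $\sA\to\Spec R$ over a finitely generated $\bZ$-algebra domain $R$, the conjecture becomes the assertion that the set of closed points $\bm\in\Spec R$ at which $\sA_{\bm}$ is ordinary is dense.

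This density is genuinely global in the arithmetic direction: inside a single family $\sA_{\bF_p}\to\Spec(R/pR)$ the ordinary locus is open -- it is the non-vanishing locus of the Hasse invariant, equivalently where the Hasse-Witt matrix is invertible -- but it can be empty, so a fibrewise argument over $\Spec\bZ$ cannot work. Instead I would specialize to (the ring of integers of) a number field $K$ and ask whether $A/K$ has ordinary reduction at a set of primes $\bp$ of positive density. For $\bp$ of good reduction over $p$, ordinariness of $\sA_{\bp}$ is a condition on the conjugacy class of $\mathrm{Frob}_{\bp}$ in the image of the $\ell$-adic Galois representation on the Tate module ($\ell\neq p$): the characteristic polynomial $\det(T-\mathrm{Frob}_{\bp}\mid V_\ell)$ must have exactly $g:=\dim A$ of its roots being $p$-adic units -- the ``middle'' Newton polygon -- and one then wants a positive-density set of such Frobenius classes by Chebotarev. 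When the Galois image is open in $\mathrm{GSp}_{2g}(\bZ_\ell)$ (the big-monodromy case: elliptic curves by Serre, or Abelian surfaces and threefolds, where the Mumford-Tate conjecture is known) the ordinary elements form a set of full Haar measure, giving density one; for Abelian varieties of CM type ordinariness becomes a splitting condition in the reflex field, again of positive density (Ogus, Noot); the remaining Mumford-Tate groups are handled through their classification in low dimension.

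The main obstacle is precisely this: we do not control the image of Galois for a general Abelian variety -- the Mumford-Tate conjecture is open -- so the Abelian-variety case is known unconditionally only in special cases (elliptic curves, Abelian surfaces and threefolds, CM type, products) and in general only modulo Mumford-Tate; this is the real content. The log Calabi-Yau generalization is further out of reach, and I would attack it in three steps: (i) a structure theorem presenting a log Calabi-Yau $X$, after a finite \'etale cover and a run of the MMP, in terms of Abelian varieties, klt Fano-type pieces, and ``strict'' Calabi-Yau pieces (K3 surfaces, hyperk\"ahler varieties, Calabi-Yau threefolds), along the lines of the Beauville-Bogomolov decomposition and its singular/log analogues (Greb-Guenancia-Kebekus, Druel, H\"oring-Peternell), much of which is still conjectural in the log category; (ii) checking that dense $F$-split type is inherited along finite \'etale covers (immediate from the trace/summand criterion and \'etale base change for $F_{*}$), along products (splittings tensor, and restrict to a fibre for the converse), and along the relevant birational contractions -- the delicate point being compatibility of the MMP with reduction mod $p$; and (iii) the new inputs at the leaves, namely dense $F$-split (indeed globally $F$-regular) type for klt Fano-type pairs -- expected from the conjectural klt/log-Fano $\leftrightarrow$ $F$-regular-type correspondence, known in many cases by Hara-type arguments -- and dense ordinary reduction for the strict Calabi-Yau pieces, which for K3 surfaces follows from the (known) Mumford-Tate conjecture for K3s but is open for Calabi-Yau threefolds. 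A realistic target for an actual proof is therefore: the conjecture unconditionally for Abelian surfaces, Abelian threefolds and CM Abelian varieties; conditionally on Mumford-Tate for arbitrary Abelian varieties; and -- modulo the log structure theory, the mixed-characteristic MMP, and the Fano and K3 sub-cases -- for log Calabi-Yau varieties in general.
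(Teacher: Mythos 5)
This statement is a \emph{conjecture} in the paper, not a theorem: the authors prove nothing about it, and it is a well-known open problem. You are right to treat it as such rather than pretending to supply a proof, and your proposal is best read as a (largely accurate) survey of what is known and a plan of attack, not as a solution. Given that, the most useful thing I can do is flag where your account is precise, where it overstates, and where the genuine gaps are.

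Your reduction of the Abelian case to dense ordinary reduction is correct: for an Abelian variety over a perfect field of characteristic $p$, global $F$-splitting is equivalent to ordinarity, and after spreading out, the conjecture is exactly the assertion that the ordinary locus meets a Zariski-dense set of closed fibers. (Note, though, that ``dense'' in the definition is Zariski density in $\Spec A$; you pass silently to positive Dirichlet density over a number field, which is stronger and cleaner but not what is literally asked.) The translation into Newton polygons of Frobenius and hence into a Chebotarev problem for the $\ell$-adic image is also the standard formulation.

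Be a little more careful, however, about what is \emph{unconditionally} known. Density-one ordinary reduction is due to Serre for elliptic curves and to Ogus for Abelian surfaces under an endomorphism hypothesis; the general Abelian surface and threefold cases were treated later (Noot, Pink, and others), and the statements one can actually cite carry hypotheses on the Mumford--Tate group or on endomorphisms. Saying flatly that the conjecture is ``known unconditionally for Abelian surfaces and threefolds'' is at least imprecise; it is safer to say it is known when the relevant monodromy is understood (generic, CM, or low-dimensional cases where Mumford--Tate is controlled), and open in general --- which is indeed why this remains a conjecture. The genuine obstruction you identify (lack of control of the Galois image, i.e.\ Mumford--Tate) is the correct one.

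For the log Calabi--Yau generalization, your three-step outline is a reasonable heuristic but each step is itself conjectural or only partially available: a log Beauville--Bogomolov decomposition in the generality needed is not established; compatibility of MMP runs with reduction modulo $p$ is delicate and not a theorem in the required form; ``dense $F$-regular type'' for klt Fano-type pairs is exactly the (open) companion conjecture; and even for strict Calabi--Yau pieces, dense ordinarity is known for K3 surfaces but open for Calabi--Yau threefolds. So the honest conclusion --- which you do reach --- is that no proof exists, only a credible reduction of the Abelian case to Mumford--Tate-type inputs and a much more speculative program in the log Calabi--Yau case. That is consistent with the paper, which states this only as a conjecture.
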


\begin{proposition}
  Suppose that $X$ is a globally $F$-split projective variety.
  \begin{enumerate}
  \item If $\sL$ is any line bundle so that $H^{i}(X, \sL^{\otimes n})
    = 0$ for $n \gg 0$, then $H^{i}(X, \sL) = 0$.
  \item If $A$ is an ample divisor on $X$, then $H^{i}(X, \sO_{X}(A))
    = 0$ for all $i > 0$.
  \item Suppose $X$ is Cohen-Macaulay.  If $A$ is an ample divisor on $X$ and $i < \dim X$, then
    $H^{i}(X, \sO_{X}(-A))=0$.
  \item Suppose $X$ is normal.  If $A$ is an ample divisor on $X$ then $H^{i}(X,
    \omega_{X}(A)) = 0$ for $i > 0$.
  \end{enumerate}
\end{proposition}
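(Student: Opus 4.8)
The plan is to deduce all four statements from a single mechanism: global $F$-splitting makes the cohomology of a line bundle a direct summand of the cohomology of its Frobenius pullback. First I would record this mechanism. Composing splittings shows that $X$ is $F^e$-split for every $e\ge 1$, so fix $\phi_e\colon F^e_*\sO_X\to\sO_X$ with $\phi_e\circ F^e=\id_{\sO_X}$. Tensoring the split inclusion $\sO_X\hookrightarrow F^e_*\sO_X$ with a line bundle $\sL$ and applying the projection formula $\sL\otimes_{\sO_X}F^e_*\sO_X=F^e_*(\sL^{p^e})$ exhibits $\sL$ as an $\sO_X$-module direct summand of $F^e_*(\sL^{p^e})$. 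Since $F^e$ is finite, $F^e_*$ is exact and $H^i(X,F^e_*\sF)=H^i(X,\sF)$; hence $H^i(X,\sL)$ is a direct summand of $H^i(X,\sL^{\otimes p^e})$ for every $i$ and every $e\ge 1$.

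Given this, (a) is immediate: take $e$ with $p^e$ large enough that $H^i(X,\sL^{\otimes p^e})=0$, and the summand $H^i(X,\sL)$ vanishes with it. For (b) I would apply (a) to $\sL=\sO_X(A)$, using Serre vanishing $H^i(X,\sO_X(nA))=0$ for $n\gg 0$ and $i>0$. For (c) I would apply (a) to $\sL=\sO_X(-A)$; the needed input, $H^i(X,\sO_X(-nA))=0$ for $n\gg 0$ and $i<\dim X$, comes from Serre duality on the Cohen--Macaulay variety $X$ (which identifies this group with the dual of $H^{\dim X-i}(X,\omega_X(nA))$, and $\dim X - i>0$) together with Serre vanishing applied to the coherent sheaf $\omega_X$.

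For (d) the mechanism must be dualized, since $\omega_X$ need not be a line bundle. Starting from the splitting $\sO_X\hookrightarrow F^e_*\sO_X$ and applying $\sHom_{\sO_X}(-,\omega_X)$ together with duality for the finite morphism $F^e$ --- which gives $\sHom_{\sO_X}(F^e_*\sO_X,\omega_X)=F^e_*\omega_X$, the induced map being exactly the trace $\Tr_{F^e}\colon F^e_*\omega_X\to\omega_X$ --- shows that $\Tr_{F^e}$ is a split surjection of $\sO_X$-modules, so $\omega_X$ is a direct summand of $F^e_*\omega_X$. Tensoring with $\sO_X(A)$ and using the projection formula again, $\omega_X(A)$ is a direct summand of $F^e_*(\omega_X(p^eA))$, hence $H^i(X,\omega_X(A))$ is a summand of $H^i(X,\omega_X(p^eA))$, which is $0$ for $e\gg 0$ and $i>0$ by Serre vanishing for the coherent sheaf $\omega_X$.

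I do not expect any step to be genuinely hard; the content is entirely in the elementary summand argument, the input theorems (Serre vanishing, and Serre duality for (c)) being standard. The two points that need care are (i) using the dual splitting, i.e. the surjectivity of the trace $\Tr_{F^e}$, rather than the splitting of $\sO_X$ itself, for part (d); and (ii) the reflexive/normal bookkeeping if $A$ is only a Weil divisor: one carries out the line-bundle manipulations over the locus where $X$ is regular in codimension one and has depth $\ge 2$, and extends over its complement using that all the sheaves in sight are reflexive, exactly as in the earlier exercise handling the normal case of the computation of $\sHom_{\sO_X}(F^e_*\sO_X,\sO_X)$.
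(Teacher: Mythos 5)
Your proof is correct, and since the paper states this proposition without proof (it is part of the exercise-driven presentation), there is nothing to compare it against; your argument is the standard one. The key mechanism---that a global $F$-splitting exhibits $\sL$ as a direct summand of $F^e_*(\sL^{p^e})$ via the projection formula, so that $H^i(X,\sL)$ splits off $H^i(X,\sL^{\otimes p^e})$---immediately yields (a), and (b) is a direct corollary via Serre vanishing. Your reduction in (c) via Serre duality on the Cohen--Macaulay projective variety, and your dualization in (d) (using that $\sHom_{\sO_X}(F^e_*\sO_X,\omega_X)\cong F^e_*\omega_X$ identifies the dual of the split inclusion with the trace $\Tr_{F^e}$, which is therefore a split surjection, and then tensoring with the line bundle $\sO_X(A)$) are both correct and handle the fact that $\omega_X$ need not be invertible. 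The only cosmetic remark: in (d), once one assumes $A$ is Cartier---as one should, since ampleness is normally understood for Cartier divisors here---the reflexive bookkeeping you mention is not actually needed, because $\sO_X(A)$ is honestly invertible and the projection formula applies on all of $X$.
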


\begin{definition}
  If $D$ is an effective Cartier divisor on a normal variety $X$, we say that $X$ is
  $e$-$F$-split along $D$ if $\sO_{X} \to F^{e}_{*}\sO_{X} \subseteq
  F^{e}_{*}\sO_{X}(D)$ splits (globally).
\end{definition}

\begin{lemma}
  If $X$ is $e$-$F$-split along an ample divisor $A$ and $\sL$ is nef,
  then $H^{i}(X, \sL) = 0$ for all $i > 0$.
\end{lemma}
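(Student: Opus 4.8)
The plan is to deduce the statement from part~(2) of the Proposition above: I will twist a fixed $e$-$F$-splitting along $A$ by $\sL$, apply the projection formula, and thereby reduce to the cohomology of an ample line bundle on a globally $F$-split variety.

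First, I would record the observation that if $X$ is $e$-$F$-split along an \emph{effective} divisor $D$ (in particular along the ample divisor $A$), then $X$ is globally $F$-split. Indeed, a splitting of $\sO_X\to F^e_*\sO_X\hookrightarrow F^e_*\sO_X(D)$ is a map $\phi\colon F^e_*\sO_X(D)\to\sO_X$ whose composition with $\sO_X\to F^e_*\sO_X\hookrightarrow F^e_*\sO_X(D)$ is the identity; since that composition factors through $F^e_*\sO_X$, the restriction $\phi|_{F^e_*\sO_X}\colon F^e_*\sO_X\to\sO_X$ already splits the Frobenius $\sO_X\to F^e_*\sO_X$. Hence the Proposition is available for $X$.

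Next comes the twisting step. Fix a splitting $\phi\colon F^e_*\sO_X(A)\to\sO_X$ of the natural inclusion $\sO_X\hookrightarrow F^e_*\sO_X(A)$ and tensor with $\sL$. By the projection formula (as recalled above),
\[
F^e_*\sO_X(A)\otimes\sL \;=\; F^e_*\!\bigl(\sO_X(A)\otimes\sL^{p^e}\bigr),
\]
so $\phi\otimes\id_\sL$ exhibits $\sL$ as an $\sO_X$-module direct summand of $F^e_*\!\bigl(\sO_X(A)\otimes\sL^{p^e}\bigr)$. Passing to cohomology, and using that pushforward along the finite morphism $F^e$ leaves cohomology unchanged, $H^i(X,\sL)$ is a direct summand of $H^i\!\bigl(X,\sO_X(A)\otimes\sL^{p^e}\bigr)$ for every $i$.

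Finally, since $\sL$ is nef so is $\sL^{p^e}$, hence $\sO_X(A)\otimes\sL^{p^e}$ is the tensor product of an ample and a nef line bundle, and is therefore ample. Applying part~(2) of the Proposition to the globally $F$-split projective variety $X$ gives $H^i\!\bigl(X,\sO_X(A)\otimes\sL^{p^e}\bigr)=0$ for all $i>0$, and therefore $H^i(X,\sL)=0$ for all $i>0$, as claimed. The argument is short, and the only points that need attention are the bookkeeping in the projection formula (tracking the replacement of $\sL$ by $\sL^{p^e}$ and which inclusion is split on each side) and the preliminary remark that $e$-$F$-splitting along an effective divisor forces global $F$-splitting. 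One could instead iterate the splitting $n$ times and try to conclude with Serre vanishing, but routing through the Proposition is the cleaner path and the one I would take.
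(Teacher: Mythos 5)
Your proof is correct and is the natural argument: tensoring the $e$-$F$-splitting along $A$ by $\sL$ and applying the projection formula exhibits $H^i(X,\sL)$ as a direct summand of $H^i\bigl(X,\sO_X(A)\otimes\sL^{p^e}\bigr)$, which vanishes for $i>0$ by part~(2) of the Proposition since $\sO_X(A)\otimes\sL^{p^e}$ is ample (ample plus nef) and $X$ is globally $F$-split, as you correctly observe follows from $e$-$F$-splitting along an effective divisor by restricting the splitting map to $F^e_*\sO_X\subseteq F^e_*\sO_X(A)$. The paper states this lemma without proof, so there is no internal argument to compare against, but your reduction to the ample case on an $F$-split variety is exactly the expected one. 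Your aside about the alternative is also accurate: iterating the splitting $n$ times puts $H^i(X,\sL)$ inside $H^i\bigl(X,\sO_X\bigl(\tfrac{p^{ne}-1}{p^e-1}A\bigr)\otimes\sL^{p^{ne}}\bigr)$, and writing $\tfrac{p^{ne}-1}{p^e-1}A + p^{ne}L = m_n\bigl(A+(p^e-1)L\bigr) + L$ with $m_n=\tfrac{p^{ne}-1}{p^e-1}$ and $A+(p^e-1)L$ ample, ordinary Serre vanishing applied to the fixed sheaf $\sO_X(L)$ also concludes; but, as you say, the single application of the projection formula followed by part~(2) is cleaner.
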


\begin{exercise}
  Suppose $X$ is normal. If $X$ is $e$-$F$-split along an ample effective divisor $A$, then $-K_{X}$ is big.
\end{exercise}

\begin{definition}
  We say that a normal variety $X$ is globally $F$-regular if, for every effective
  Cartier divisor $D$, there is some $e > 0$ so that $X$ is
  $e$-$F$-split along $D$.  We say that $X$ is locally $F$-regular (or strongly $F$-regular) if,
  for every effective Cartier divisor $D$, there is some $e > 0$ so that
  $\sO_{X} \to F^{e}_{*} \sO_{X}(D)$ is split in a neighborhood of any point.
\end{definition}

\begin{exercise}
  Suppose that $X = \Spec(R)$ is affine.  Then $X$ is globally
  $F$-regular if and only if $X$ is locally $F$-regular if and only
  if, for all $0 \neq c \in R$, there exists $e > 0$ such that $R \to
  F^{e}_{*}R$ given by $1 \mapsto F^{e}_{*}c$ is split.
\end{exercise}

\begin{exercise}
  Show that a regular ring is always strongly $F$-regular, and that a
  strongly $F$-regular ring is a Cohen-Macaulay normal domain.
\end{exercise}

\begin{exercise}
  If $X$ is a smooth projective curve, then $X$ is globally
  $F$-regular if and only if $X$ has genus zero.
\end{exercise}

\begin{exercise}
  If $X$ is globally $F$-regular and $D$ is big and nef, then
  $H^{i}(X, \omega_{X}(D)) = 0$ for all $i > 0$.
\end{exercise}
\section{Trace of Frobenius and Global Sections}
\label{sec:TraceOfFrob}
Suppose $X$ is a normal quasi-projective variety over a perfect field $k$ of characteristic $p > 0$.  We just read about Frobenius splittings $F^e_* \cO_X \to \cO_X$.  We will generalize this notion.  Fix $\sL$ to be a line bundle and suppose that we have a non-zero $\cO_X$-linear map $\phi : F^e_* \sL \to \cO_X$.

Twist $\phi$ by $\sL$ and apply $F^e_*$ and by the projection formula we get:
\[
F^{2e}_* (\sL^{1+p^e}) \to F^e_* \sL
\]
We compose this back with $\phi$ to obtain:
\[
\phi^2 : F^{2e}_* (\sL^{1+p^e}) \to \cO_X.
\]
This is an abuse of notation of course.

By repeating this operation we get
\[
\phi^n : F^{ne}_* (\sL^{1 + p^e + \ldots + p^{(n-1)e}}) \to \cO_X.
\]

\begin{theorem}\cite{HartshorneSpeiserLocalCohomologyInCharacteristicP,LyubeznikFModulesApplicationsToLocalCohomology,Gabber.tStruc}
\label{thm.HSL}
With notation as above, there exists $n_0$ such that $\Image(\phi^n) = \Image(\phi^{n+1})$ as sheaves for all $n \geq n_0$.
\end{theorem}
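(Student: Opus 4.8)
The plan is to exploit the fact that the images $\Image(\phi^n)$ form a \emph{descending} chain of coherent subsheaves of $\cO_X$ and that the chain is compatible with the iteration map. First I would check the descending property: applying the construction that produces $\phi^{n+1}$ from $\phi^n$, one sees that $\phi^{n+1}$ factors through $\phi^n$ in the sense that $\Image(\phi^{n+1}) \subseteq \Image(\phi^n)$. Concretely, $\phi^{n+1}$ is obtained by twisting $\phi^n$, pushing forward by $F^e_*$, and composing with $\phi$; since composing with $\phi$ lands inside $\Image(\phi)$ and each further composition only shrinks the image, we get $\cO_X \supseteq \Image(\phi) = \Image(\phi^1) \supseteq \Image(\phi^2) \supseteq \cdots$. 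So the $\Image(\phi^n)$ are a decreasing sequence of coherent ideal sheaves on the Noetherian scheme $X$.

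Now, coherence plus the descending chain condition on a Noetherian scheme does \emph{not} immediately give stabilization — descending chains of ideals need not stabilize in a general Noetherian ring. The key extra structure is that $\Image(\phi^{n+1})$ is determined from $\Image(\phi^n)$ by applying a fixed operation. Here is the cleanest way I would package this: for an ideal sheaf $\mathfrak{b} \subseteq \cO_X$, define $\phi(\mathfrak{b})$ to be the image of $F^e_*(\mathfrak{b} \cdot \sL)$ under (the twisted-down version of) $\phi$; this is again an ideal sheaf, and one checks $\mathfrak{b} \subseteq \mathfrak{c}$ implies $\phi(\mathfrak{b}) \subseteq \phi(\mathfrak{c})$, and that $\Image(\phi^{n+1}) = \phi(\Image(\phi^n))$. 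Thus the chain is $\cO_X \supseteq \phi(\cO_X) \supseteq \phi^2(\cO_X) \supseteq \cdots$ where now $\phi^n$ means the $n$-fold iterate of the operator $\phi(-)$. Because the operator is order-preserving and $\Image(\phi) \subseteq \cO_X$, it follows that once $\Image(\phi^{n_0}) = \Image(\phi^{n_0+1})$ for some $n_0$, the chain is constant from $n_0$ on — applying $\phi$ to both sides repeatedly. So it suffices to prove the chain is eventually constant, which one does locally: on an affine open $\Spec R$, the sheaves $\Image(\phi^n)$ correspond to ideals $I_n \subseteq R$, and one must show $I_n = I_{n+1}$ for $n \gg 0$.

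To get stabilization of the $I_n$ I would argue as follows. The operator $I \mapsto \phi(I)$ on ideals of $R$ is the one sending $I$ to the (finite) image of a map out of a finitely generated $R$-module built functorially from $I$; in particular $\phi(I + J) = \phi(I) + \phi(J)$ for ideals $I, J$, because $F^e_*$ is exact and images add. Fix $n_1$ large enough that the coherent ideal $I_{n_1}$ is generated (locally) by finitely many elements $c_1, \dots, c_m$, equivalently $I_{n_1} = \sum (c_j)$. For each fixed $c \in R$, the chain of principal-seed images $\{\phi^n((c))\}_n$ is exactly the chain of images $\Image(\phi^n_c)$ of the twisted iterates with seed $c$, and the standard Hartshorne–Speiser–Lyubeznik argument — which is precisely the content of the cited references \cite{HartshorneSpeiserLocalCohomologyInCharacteristicP,LyubeznikFModulesApplicationsToLocalCohomology,Gabber.tStruc} — shows each such chain stabilizes (this is the statement that the associated "unit $R[F^e]$-module / Cartier module" is eventually supported on a fixed closed set; finite generation of the relevant module as a module over the non-commutative ring $R[F^e]$ or $R[\Theta]$ forces the kernel of iteration to stabilize). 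Since $I_{n_1 + k} = \phi^k(I_{n_1}) = \sum_j \phi^k((c_j))$ and each of the finitely many summand-chains stabilizes, the whole chain $\{I_n\}$ stabilizes.

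The main obstacle, and the place where I expect real work rather than bookkeeping, is the last step: proving that a single seeded chain $\Image(\phi^n_c)$ stabilizes. This is \emph{not} formal from Noetherianity — it genuinely uses positive characteristic and the $F$-module/Cartier-module finiteness theorem. The slick route is to dualize: $\sHom_{\cO_X}(F^e_*\cO_X, \cO_X)$-type data on $X$ is Matlis/Grothendieck dual to an $F^e$-action (a left action of the twisted polynomial ring $\cO_X[F^e]$) on a cofinite module, and such modules are "eventually $F$-nilpotent on the kernel," which is exactly the stabilization of images on the dual side. I would cite \autoref{thm.HSL}'s references for this finiteness input and not reprove it; the contribution of the proof I am sketching is the reduction from the general line-bundle-twisted $\phi^n$ to that known scalar statement, via the additivity $\phi(I+J) = \phi(I)+\phi(J)$ and the order-preserving operator formalism above. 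One should also remember to note that stabilization is local and the stable value glues, since $n_0$ can be taken uniform over a finite affine cover by taking the max.
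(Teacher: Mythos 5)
Your proposal is logically sound but takes a very different route from the paper's, and the key ``reduction'' does less work than you suggest. The paper's sketch is Noetherian induction in disguise: the opens $U_n$ where $\Image(\phi^n)=\Image(\phi^{n+1})$ ascend and hence stabilize; one then localizes at the generic point of the complementary closed set, so that away from the closed point $x$ the chain is already constant; and the crux is producing a single $l>0$ with $\bm_x^l\cdot\Image(\phi^m)\subseteq\Image(\phi^n)$ for all $n\geq m\gg 0$. That bound traps the tail of the descending chain inside submodules of the finite-length module $\Image(\phi^m)/\bm_x^l\Image(\phi^m)$, which forces stabilization; finding $l$ is exactly where the $p^e$-contracting nature of $\phi$ enters, and it is the exercise the paper leaves to the reader. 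Your operator calculus --- additivity of $\ba\mapsto\phi(\ba)$, monotonicity, and the fact that one-step equality propagates forward --- is all correct, but the resulting reduction to the seeded chains $\phi^k((c_j))$ is vacuous as a proof strategy: the original chain \emph{is} the seeded chain with $c=1$, and the ``scalar'' HSL/Cartier-module stabilization you invoke for each $c_j$ is the full strength of \autoref{thm.HSL} itself rather than a more primitive lemma. In effect the proposal cites the theorem to prove the theorem, with correct but unnecessary bookkeeping in front. If you want an argument that actually explains why stabilization happens, carry out the paper's exercise: after localizing at the generic point of $X\setminus U_n$, use a degree-shrinking estimate of the shape $\delta(\phi(F^e_*f))\leq\delta(f)/p^e+K/p^e$ (which appears later in these notes, in the section on gauges) to produce the exponent $l$, and then run the finite-length argument above.
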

\begin{proof}[Sketch of a proof]
Let $U_n$ be the locus where $\Image(\phi^n)$ and $\Image(\phi^{n+1})$ coincide.  It is easy to see that the $U_n$ are ascending open sets and so they stabilize by Noetherian induction.  By localizing at the generic point of $X \setminus U_n$ for $n \gg 0$ one can assume that $X$ is the spectrum of a local ring with closed point $x$ and that $U_n = X \setminus x$.  The trick is then to show that $\bm_x^{l} \cdot \Image(\phi^m) \subseteq \Image(\phi^n)$ for some $l > 0$ and all $n \geq m \gg 0$.  Finding this $l$ is left as an exercise to the reader.
\end{proof}

These images measure the singularities of the pair $(X, \phi)$.  We'll see the relevance of $\phi$ momentarily.

\begin{definition}
We define $\sigma(X, \phi) = \Image(\phi^n)$ for $n \gg 0$.  We say the $(X, \phi)$ is \emph{$F$-pure} if $\sigma(X, \phi) = \cO_X$.
\end{definition}

\begin{exercise}
Suppose that $\phi : F^e_* \sL \to \O_X$ is surjective.  Prove that $(X, \phi)$ is $F$-pure.
\end{exercise}

Let us now describe the significance of $\phi$ in terms of divisor pairs that you might be more familiar with.

\begin{proposition}
\label{prop.MapDivisorCorrespondence}
There is a bijection of sets
\[
\left\{ \begin{array}{c}\text{effective $\bQ$-divisors $\Delta$ such that}\\ \text{ $(p^e - 1)(K_X + \Delta)$ is Cartier}\end{array} \right\}\Big/\sim\;\; \longleftrightarrow \left\{ \begin{array}{c}\text{line bundles $\sL$ and non-zero}\\ \text{$\cO_X$-linear maps $\phi : F^e_* \sL \to \cO_X$}\end{array} \right\}
\]
where the equivalence relation on the left declares two maps to be equivalent if they agree up to multiplication by a unit from $\Gamma(X, \O_X)$.
\end{proposition}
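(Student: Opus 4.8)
The plan is to construct the two directions of the bijection explicitly and then check they are mutually inverse. Starting from a non-zero map $\phi : F^e_* \sL \to \cO_X$, I first reduce to understanding such maps in terms of divisors. By the trick used in the proof of the Lemma on $\sHom_{\sO_X}(F^e_*\sO_X,\sO_X)$ — twisting by $\omega_X$ and applying Grothendieck duality for the finite morphism $F^e$ — one has
\[
\sHom_{\sO_X}(F^e_* \sL, \cO_X) \;=\; F^e_* \sHom_{\sO_X}\!\bigl(\sL, \omega_X^{1-p^e}\bigr) \;=\; F^e_* \sO_X\!\bigl((1-p^e)K_X\bigr)\otimes \sL^{-1},
\]
working on the smooth locus and extending by reflexivity as in the exercise following that Lemma. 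Thus a non-zero $\phi$ corresponds to a non-zero global section of $F^e_*\sO_X((1-p^e)K_X)\otimes \sL^{-1}$, i.e. (after choosing an isomorphism identifying $\sL$ appropriately) to an effective divisor $D$ linearly equivalent to $(1-p^e)K_X - (\text{the divisor of }\sL\text{-part})$; setting $\Delta = \tfrac{1}{p^e-1} D$ gives an effective $\bQ$-divisor with $(p^e-1)(K_X+\Delta)$ Cartier, and the choice of section is well-defined exactly up to a unit in $\Gamma(X,\sO_X)$, which matches the stated equivalence relation on the right-hand side.

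Conversely, given an effective $\bQ$-divisor $\Delta$ with $(p^e-1)(K_X+\Delta)$ Cartier, I set $\sL := \sO_X((1-p^e)(K_X+\Delta))$ (a genuine line bundle by hypothesis) and must produce a canonical-up-to-unit map $\phi_\Delta : F^e_*\sL \to \cO_X$. The recipe: the natural inclusion $\sO_X \hookrightarrow \sO_X(\lceil(p^e-1)\Delta\rceil) = \sO_X((p^e-1)\Delta)$ (an equality since $(p^e-1)\Delta$ is integral) together with the trace map $\Tr_{F^e}: F^e_*\omega_X^{p^e} \to \omega_X$ from the exercise on the trace of Frobenius, suitably twisted by $\sO_X((1-p^e)K_X)$ and reflexified over the smooth locus, yields
\[
F^e_*\sL \;=\; F^e_*\sO_X((1-p^e)(K_X+\Delta)) \longrightarrow F^e_*\sO_X((1-p^e)K_X) \xrightarrow{\ \Tr_{F^e}\ } \sO_X,
\]
where the first arrow is $F^e_*$ applied to the inclusion coming from effectivity of $\Delta$. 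Since $\Tr_{F^e}$ generates $\sHom_{\sO_X}(F^e_*\omega_X,\omega_X)$ as an $F^e_*\sO_X$-module (again by that exercise), any two choices in this construction differ by a unit, so $\phi_\Delta$ is well-defined on the right-hand side of the correspondence.

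Finally I would verify the two constructions are inverse to each other: composing ``$\phi \mapsto \Delta_\phi \mapsto \phi_{\Delta_\phi}$'' returns $\phi$ up to a unit because both directions are just unwinding the same duality identification $\sHom_{\sO_X}(F^e_*\sL,\cO_X) \cong H^0(X, F^e_*\sO_X((1-p^e)K_X)\otimes\sL^{-1})$ and tracking the divisor of the distinguished section; and the reverse composite is an identity on $\bQ$-divisors by construction. The main obstacle I anticipate is the bookkeeping on the \emph{non-smooth} locus: all the sheaf identities above are clean on $X_{\mathrm{sm}}$ via Grothendieck duality and the Cartier/trace isomorphisms, but one must argue that everything extends uniquely across the codimension-$\geq 2$ singular locus using reflexivity of $F^e_*\sO_X(-)$-type sheaves (which holds since $X$ is normal), and that the Cartier hypothesis on $(p^e-1)(K_X+\Delta)$ is exactly what makes $\sL$ a line bundle rather than merely a reflexive rank-one sheaf — so the correspondence genuinely lands among \emph{line bundles} as stated. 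Checking that the unit-ambiguity on each side corresponds precisely (no extra identifications introduced) is the other point requiring care.
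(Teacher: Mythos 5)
Your proposal is correct and follows essentially the same route as the paper's sketch: the forward direction from $\Delta$ is the inclusion $F^e_*\O_X((1-p^e)(K_X+\Delta)) \hookrightarrow F^e_*\O_X((1-p^e)K_X)$ composed with the twisted trace of Frobenius, and the reverse direction uses Grothendieck duality for the finite morphism $F^e$ to view $\phi$ as a global section of $\sL^{-1}\otimes\O_X((1-p^e)K_X)$ (your chain of sheaf-Hom identities should be parenthesized so that $\sL^{-1}$ sits inside the $F^e_*$, i.e.\ $F^e_*(\sO_X((1-p^e)K_X)\otimes\sL^{-1})$, as your later deductions implicitly assume), taking its divisor of zeros $D_\phi$ and setting $\Delta = \tfrac{1}{p^e-1}D_\phi$. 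You supply a bit more care than the paper about reflexification across the singular locus and about verifying the two constructions are mutually inverse up to a unit, but the key ideas match.
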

\begin{proof}[Sketch]
There is always a map $F^e_* \O_X(K_X) \to \O_X(K_X)$, hence by the projection formula (and reflexification) we get a map $F^e_* \O_X( (1-p^e)K_X) \to \O_X$.  If $\Delta$ is effective, we then get the composition
\[
F^e_* \O_X( (1-p^e)(K_X+\Delta)) \subseteq F^e_* \O_X( (1-p^e)K_X) \to \O_X.
\]
Thus we obtain the $\Rightarrow$ direction for $\sL = \O_X( (1-p^e)(K_X + \Delta))$.  The reverse direction follows easily from the isomorphism $F^e_* (\sL^{-1}\otimes \O_X( (1-p^e)K_X)) \cong \sHom_{\O_X}(F^e_* \sL, \O_X)$ coming from Grothendieck duality for finite maps.  Given this, a map $\phi$ determines a global section of $\sL^{-1}\otimes \O_X( (1-p^e)K_X)$ which determines an effective Weil divisor $D_{\phi}$.  Set $\Delta = {1 \over p^e - 1} D_{\phi}$.
\end{proof}

Given any effective $\bQ$-divisor $\Delta$ such that $K_X + \Delta$ is $\bQ$-Cartier with index not divisible by the characteristic $p$, one can find $e > 0$ such that $(p^e - 1)(K_X + \Delta)$ is Cartier and hence find a corresponding $\phi_{e, \Delta}$.

\begin{exercise}
If $K_X + \Delta$ is $\bQ$-Cartier with index not divisible by $p > 0$, show that there exists an $e > 0$ such that $(p^e - 1)(K_X + \Delta)$ is Cartier.
\end{exercise}

In particular, this is exactly how the above was presented in the mini-lecture.  Given such a $\Delta$ one can form $\phi_{e, \Delta} : F^e_* \O_X\big( (1-p^e)(K_X + \Delta) \big) \to \O_X$ for sufficiently divisible $e$.  \autoref{thm.HSL} says that  $\sigma(X, \Delta)$ has stable image for $e \gg 0$ and sufficiently divisible.  Translated this way consider the following exercise:

\begin{exercise}
With notation as above, suppose that $d > e$ are integers such that $(p^i-1)(K_X + \Delta)$ is Cartier for $i = d,e$.  Show that
\[
\phi_{d, \Delta} : F^d_* \O_X\big( (1-p^d)(K_X + \Delta) \big) \to \O_X
\]
factors through
\[
\phi_{e, \Delta} : F^e_* \O_X\big( (1-p^e)(K_X + \Delta) \big) \to \O_X.
\]
Further show that for any $b, c$ satisfying the condition on $i$ above, the composition
\[
\begin{array}{rl}
& F^{b+c}_* \O_X( (1 - p^b)(K_X + \Delta) + p^b(1-p^c)(K_X + \Delta)) \\
\xrightarrow{F^b_* \big(\phi_{b, \Delta} \tensor \O_X( (1-p^c)(K_X + \Delta) ) \big)} & F^c_* \O_X\big( (1-p^c)(K_X + \Delta) \big)  \\
\xrightarrow{\phi_{c, \Delta}} & \O_X
\end{array}
\]
is simply $\phi_{b+c, \Delta}$.
\end{exercise}

\begin{definition}
Given a $\bQ$-divisor $\Delta$ such that $K_X + \Delta$ is $\bQ$-Cartier with index not divisible by $p > 0$, we say that the pair $(X, \Delta)$ is $F$-pure if $(X, \phi_{e, \Delta})$ is $F$-pure for some (equivalently any) $\phi_{e, \Delta}$ corresponding to $\Delta$ as in \autoref{prop.MapDivisorCorrespondence}.
\end{definition}

\begin{exercise}
Suppose that $X$ is an affine cone over a smooth hypersurface $Y$ in $\bP^n$.  Further suppose that $H^0(Y, F^e_* \omega_Y(p^e n)) \to H^0(Y, \omega_Y(n))$ surjects for all $n \geq 0$ and all $e \geq 0$.  Let $\pi : X' \to X$ be the blowup of $X$ at the cone point (a resolution of singularities).  Show that $\sigma(X, 0) = \pi_* \O_{X'}(K_{X'/X} + E)$ where $E \cong Y$ is the exceptional divisor.
\Hint{Recall that $\omega_X = \oplus_{i \in \bZ} H^0(Y, \omega_Y(i))$ which is just $\O_X$ with a shift in grading (since $X$ is a hypersurface).}
\end{exercise}

\begin{remark}
Suppose we now consider a general situation of a pair $(X, \Delta)$ with $K_X + \Delta$ $\bQ$-Cartier.
Philosophically, one should expect that $\sigma(X, \Delta)$ to correspond with something like the following ideal sheaf $\pi_* \O_{X'}(\lceil K_{X'} + \pi^*(K_X + \Delta) + \varepsilon E\rceil)$ where $\pi : X' \to X$ is a log resolution and $E = \Supp(\pi^{-1}_* \Delta) \cup \text{Exc}_{\pi}$.  See \cite{FujinoSchwedeTakagiSupplements,TakagiAdjointIdealsAndACorrespondence,BhattSchwedeTakagiWeakOrdinarity}.
\end{remark}

We can globalize this process.

\begin{definition}
Suppose that $X$ is a proper variety, $\Delta \geq 0$ is a $\bQ$-divisor such that $K_X+ \Delta$ is $\bQ$-Cartier and $M$ is a line bundle.  We define
\[
S^0(X, \sigma(X, \Delta) \otimes M)
\]
to be the image
\[
\Image\Big( H^0\big(X, F^e_* \big( \O_X( (1-p^e)(K_X + \Delta)) \otimes M^{p^e}\big) \big) \to H^0(X,  \sigma(X, \Delta) \otimes M \Big) \subseteq H^0(X, M) \Big)
\]
for some $e \gg 0$.

We make a similar definition even without the presence of $\Delta$.  We define $S^0(X, \omega_X \otimes M)$
to be the image
\[
\Image\Big( H^0\big(X, F^e_* \big( \omega_X \otimes M^{p^e}\big) \big) \to H^0(X, \omega_X \otimes M) \Big)
\]
for $e \gg 0$.
Here the map is induced by $F^e_* \omega_X \cong F^e_* \O_X(K_X) \to \O_X(K_X) = \omega_X$ mentioned earlier.
\end{definition}

\begin{exercise}
Suppose that $X$ is Frobenius split by some map $\phi_{e, \Delta} : F^e_* \O_X \to \O_X$ which corresponds to a Weil divisor $(p^e - 1)\Delta = D \sim (1-p^e)K_X$ as above.  Show that $S^0(X, \sigma(X, {1 \over p^e-1} D) \otimes M) = H^0(X, M)$ for any line bundle $M$.
\end{exercise}

\begin{question}
Find a way to compute $S^0(X, \omega_X \otimes M)$ for $M$ that is not necessarily ample.  In particular, how do you know when you have chosen $e > 0$ large enough?
\end{question}

Note that this object depends on the map used to construct $\sigma(X, \Delta)$ and not \emph{simply} on the sheaf $\sigma(X, \Delta)$.  Technically, we are relying on the structure of $\sigma(X, \Delta)$ as a Cartier module \cite{BlickleBockleCartierModulesFiniteness,BlickleSchwedeSurveyPMinusE}.  Likewise $S^0(X, \omega_X \otimes M)$ depends on the map $F^e_* \omega_X \to \omega_X$ (fortunately, that map is canonically determined).

\begin{remark}
The letter $S^0$ should be thought of as a capitalized or globalized $\sigma$.
\end{remark}

The sections $S^0(X, \sigma(X, \Delta))$ are important because they behave as though Kodaira (or Kawamata-Viehweg) vanishing were true.  Perhaps this should not be surprising, indeed in order to construct counter-examples to Kodaira vanishing in characteristic $p > 0$ \cite{raynaud_contre-exemple_1978} one typically first finds a curve $X$ and line bundle $M$ such that $S^0(X, \omega_X \otimes M) \neq H^0(X, \omega_X \otimes M)$.  This failure of equality is the key ingredient in proving that Kodaira vanishing fails.

Consider the following example.
\begin{example}
\label{ex.ExtendingSections}
Suppose that $X$ is a normal projective variety, $M$ is a Cartier divisor, $\Delta$ is a $\bQ$-divisor and $D$ is a normal Cartier divisor such that $D$ and $\Delta$ have no common components.  We also assume that $M - K_X - \Delta-D$ is an ample $\bQ$-divisor.  Then $S^0(X, \sigma(X, \Delta+D) \otimes \O_X(M) )$ surjects onto $S^0(D, \sigma(D, \Delta|_D) \otimes \O_D(M|_D))$.

This is actually quite easy.  First consider the following commutative diagram:
\[
{\small
\xymatrix@C=12pt{
F^e_* \O_X( (1-p^e)(K_X + \Delta + D) - D) \ar[d]_{\alpha} \ar[r] & F^e_* \O_X( (1-p^e)(K_X + \Delta + D)) \ar[d]_{\beta} \ar[r] & F^e_* \O_D( (1-p^e)(K_X + \Delta + D)|_D ) \ar[d]_{\gamma} \\
\O_X(-D) \ar@{^{(}->}[r] &  \O_X \ar@{->>}[r]_{\rho} & \O_D
}
}
\]
Note that the upper right term is $F^e_* \O_D( (1-p^e)(K_D + \Delta|_D) )$ and it is not difficult\footnote{Well, maybe it's a little difficult, it is an exercise below.} to see that the map $\gamma$ is the one whose image on global sections is the one used to define $\sigma(D, \Delta|_D)$.

We then twist by $M$, use the projection formula and take cohomology to obtain:
\begin{equation}
\label{eq.LiftingDiagram}
{\scriptsize
\xymatrix@C=15pt{
\ldots \ar[d]_{\alpha} \ar[r] & H^0(X, F^e_* \O_X( (1-p^e)(K_X + \Delta + D) + p^e M)  \ar[d]_{\beta} \ar[r]^{\delta} & H^0(X, F^e_* \O_D( (1-p^e)(K_D + \Delta|_D)+p^e M|_D )  )\ar[d]_{\gamma} \\
H^0(X, \O_X(-D + M) ) \ar@{^{(}->}[r] &  H^0(X,\O_X(M)) \ar[r]_{\rho} & H^0(D,\O_D(M|_D))
}
}
\end{equation}
The map labeled $\rho$ need not be surjective.  However note that
\[
H^1(X, \O_X( (1-p^e)(K_X + \Delta + D) - D + p^e M) =  H^1(X, \O_X(M - D) \otimes \O_X( (p^e - 1)(M - K_X - \Delta - D)))
\]
which vanishes due to Serre vanishing for $e \gg 0$.  It follows that $\delta$ is surjective.  The result follows from an easy diagram chase since the image of $\beta$ is $S^0(X, \sigma(X, \Delta+D) \otimes \O_X(M) )$ and the image of $\gamma$ is $S^0(D, \sigma(D, \Delta|_D) \otimes \O_D(M|_D))$.
\end{example}

There are some details in the above example that I leave as an exercise.

\begin{exercise}
Show that diagram \autoref{ex.ExtendingSections} commutes at least in the case that $X$ is smooth or at least factorial.  To do this, start with the following diagram:
\[
\xymatrix@R=16pt{
0 \ar[r] & F^e_* \O_X(-D) \ar[r] & F^e_* \O_X \ar[r] & F^e_* \O_D \ar[r] & 0\\
 & F^e_* \O_X(-p^e D) \ar@{^{(}->}[u] & \\
0 \ar[r] & \O_X(-D) \ar[u]_{F^e} \ar[r] & \O_X \ar[r] \ar[uu]_{F^e} & \O_D \ar[uu]_{F^e} \ar[r] & 0
}
\]
Apply $\sHom_{\O_X}(\blank, \omega_X)$ and then twist everything by $\O_X(-K_X - D)$.  Additionally twist the $F^e_*$-row by $F^e_* \O_X( (1-p^e)\Delta)$ and note we still have a map.
\end{exercise}

\begin{exercise}
In general, a divisor $\Delta$ induces a map $F^e_* \O_X( (1-p^e)(K_X + \Delta + D)) \to \O_X$ which induces a map $F^e_* \O_D((1-p^e)(K_X + \Delta + D)|_D) \to \O_D$ by restriction.  This map then induces a divisor $\Delta_D$ called the \emph{$F$-different}.  It was recently shown by O.~Das that the $F$-different coincides with the usual notion of \emph{different} in full generality, see \cite{DasOnStronglyFregularInversion}.  Verify carefully in the case of the previous exercise\footnote{or at least when $D$ is Cartier and $(p^e-1)\Delta$ is Cartier} that the $F$-different is exactly $\Delta|_D$.
\end{exercise}

\subsection{Examples and computations of $S^0$}

\begin{exercise}
Use a diagram similar to the one in the example above to show the following.  Suppose that $C$ is a curve and $L$ is ample.  Show that $S^0(C, \omega_C \otimes L)$ defines a base point free linear system if $\deg L \geq 2$ and that $S^0(C, \omega_C \otimes L)$ defines an embedding if $\deg L \geq 3$.
\Hint{To show that it defines a base point free linear system, choose a point $Q \in C$ and consider $\omega_C \otimes L \to (\omega_C \otimes L)/(\omega_C \otimes L \otimes \O_C(-Q))$.}
\end{exercise}

One can also ask when $S^0 \neq 0$, \cite{TanakaTraceOfFrobenius} showed this when $\deg L \geq 1$ and $C \neq \bP^1$.

\begin{question}
Suppose $X$ is a surface, or maybe a ruled surface to start, and $L$ is a line bundle on $X$.  What conditions on $L$ imply that $S^0(X, \omega_X \otimes L)$
\begin{itemize}
\item{} is nonzero?
\item{} defines a base point free linear system?
\item{} induces an embedding $X \subseteq \bP^n$?
\end{itemize}
What about other special varieties (smooth cubics, elliptic surfaces?)
\end{question}

\begin{exercise}
Suppose that $(X, \Delta)$ is as above and that $M$ is a line bundle such that $M - K_X - \Delta$ is ample.  Further suppose that $N$ is an ample divisor such that $\O_X(N)$ is globally generated by $S^0$.  Use Castelnuovo-Mumford regularity to prove that $\sigma(X, \Delta) \tensor \O_X(M + (\dim X)N)$ is globally generated.
\Hint{Consider the map $F^e_* \O_X( (1-p^e)(K_X + \Delta) + (p^e - 1)M + M + p^e (\dim X) N) \to \O_X(M + (\dim X)N)$ and show that the source is globally generated as an $\O_X$-module using Castelnuovo-Mumford regularity.  See \cite{Keeler_Fujita_s_conjecture_and_Frobenius_amplitude,Schwede_A_canonical_linear_system}.}
\end{exercise}

\begin{question}[Hard]
With notation as above, find a constant $c(d)$, depending only on $d = \dim X$ such that if $X$ is a smooth projective variety and $L$ is ample and $M_{c(d)} = K_X + L$, then $S^0(X, \sigma(X, 0) \otimes M)$ yields a base point free linear system.
\end{question}

Later in \autoref{sec:SeshadriEtc}, we will describe several other ways to produce sections in $S^0$.

\section{$F$-singularities versus Singularities of the MMP}

In this section we discuss the relationship between both global and local notions of $F$-singularity theory and of Minimal Model Program. The loose picture is summarized below:
\vspace{5pt}
\begin{center}
 \begin{tabular}{l|cl}
globally $F$-regular & $\sim$ & log-Fano \\
strongly $F$-regular & $\sim$ & Kawamata log terminal \\
globally $F$-split & $\sim$ & log-Calabi-Yau \\

sharply $F$-pure & $\sim$ & log canonical \\
\hline
work in proofs & & interesting geometry
\end{tabular}
\end{center}
\vspace{5pt}
Unfortunately there is a slight discrepancy between the left side, which works for proofs, and the right side, which is geometrically interesting. So, whenever one would like to prove something geometrically interesting has to deal with this discrepancy, the extent of which is the main topic of this section.

We work over a perfect base-field $k$ of characteristic $p>0$. Unless otherwise stated, a pair $(X, \Delta)$ denotes a normal, essentially finite type scheme $X$ over $k$ endowed with an effective $\bQ$-divisor $\Delta$.

First, recall the following definition.

\begin{definition}
A pair $(X, \Delta)$ is globally  $F$-split if  the natural map $\sO_X \to F_*^e \sO_X (\lceil (p^e -1) \Delta \rceil) $ admits a splitting for some integer $e>0$. A pair $(X, \Delta)$ is sharply $F$-pure if it has an affine open cover by globally $F$-regular pairs.
\end{definition}

Note that $\lceil (p^e -1) \Delta \rceil$ is a Weil divisor. As we observed before, there is an associated sheaf $\sO_X(\lceil (p^e -1) \Delta \rceil)$ defined with sections
\begin{equation*}
\sO_X(\lceil (p^e -1) \Delta \rceil)(U):=\{ f \in K(X) |  (f)|_U + \lceil (p^e -1) \Delta \rceil|_U \geq 0 \}.
\end{equation*}
Further, this sheaf is $S_2$ or equivalently reflexive (see \cite{Hartshorne_Generalized_divisors_on_Gorenstein_schemes,Hartshorne_Stable_reflexive_sheaves} for generalities on reflexive sheaves). In particular its behavior is determined in codimension one.

Now we define the following related notions that, as we will see, are related to globally $F$-split and sharply $F$-pure as log canonical is related to Kawamata log terminal.

\begin{definition}
A pair $(X, \Delta)$ is globally  $F$-regular if for every effective $\bZ$-Weil divisor $D$, the natural map $\sO_X \to F_*^e \sO_X (\lceil (p^e -1) \Delta \rceil + D) $ admits a splitting for some integer $e>0$. A pair $(X, \Delta)$ is strongly $F$-regular if it has an affine open cover by globally $F$-regular pairs.
\end{definition}

\begin{remark}
Strongly $F$-regular is a local condition, i.e., $(X, \Delta)$ is strongly $F$-regular, if and only if for each $x \in X$, $(\Spec \sO_{X,x}, \Delta_x)$ is strongly $F$-regular \cite[Exercise 3.10]{Schwede_Tucker_A_survey_of_test_ideals}. Also, then every affine open set and every local ring of  a strongly $F$-regular pair is strongly $F$-regular.
\end{remark}

\begin{exercise}
\label{exc:explicit_gfr_general}
Let $0 \leq  a_1, a_2, a_3 \leq 1$ be rational numbers. Show that $\left(\bP^1, a_1 P_1 + a_2 P_2 + a_3 P_3 \right)$ is globally $F$-regular (resp. globally $F$-split) if and only if for some integer $e>0$, \[x^{\lceil (p^e -1) a_1 \rceil} y^{\lceil (p^e -1) a_2 \rceil} ( x+y)^{\lceil (p^e -1) a_3 \rceil}\] has a non-zero monomial $x^i y^j$ with $0 \leq i, j < p^e -1$ (resp. $0 \leq i, j \leq p^e -1$) for some $e >0$.

\Hint{Show that we can assume that $P_1=0$, $P_2=1$ and $P_3 = \infty$. Describe then explicitly the maps
\begin{equation}
\label{eq:globall_F_regular_exercise}
H^0 \left(\bP^1, \lfloor (1-p^e)(K_X + a_1 P_1 + a_2 P_2 + a_3 P_3) -D \rfloor \right) \to H^0\left(\bP^1, (1-p^e)K_X  \right)  \to H^0 \left(\bP^1, \sO_{\bP^1} \right)
\end{equation}
( resp., $H^0 \left(\bP^1, \lfloor (1-p^e)(K_X + a_1 P_1 + a_2 P_2 + a_3 P_3)  \rfloor \right) \to H^0 \left(\bP^1, (1-p^e)K_X  \right) \to H^0 \left(\bP^1, \sO_{\bP^1} \right)$). Concluding the globally $F$-split case is then easy. In the globally $F$-regular case one has to deal with $D$, since if $\deg D >1$ then the existence of a non-zero monomial $x^i y^j$ with $0 \leq i, j < p^e -1$ does not guarantee that the composition of the maps in \autoref{eq:globall_F_regular_exercise} is surjective. The idea here is to show that in this situation \autoref{eq:globall_F_regular_exercise} becomes  surjective if one replaces $e$ by $ne$ for any $n \gg 0$. For this, show that if $x^i y^j$ is a non-zero monomial of $x^{\lceil (p^e -1) a_1 \rceil} y^{\lceil (p^e -1) a_2 \rceil} ( x+y)^{\lceil (p^e -1) a_3 \rceil}$ such that $0 \leq i, j < p^e -1$ and $i$ and $j$ are minimal with these properties (or rather precisely $i$ is minimal and $j$ is minimal amongst having that $i$), then $x^{i \frac{p^{ne} -1}{p^e -1}} y^{j
\frac{p^{ne} -1}{p^e -1}}$ is a non-zero monomial of $\left(x^{\lceil (p^e -1) a_1 \rceil} y^{\lceil (p^e -1) a_2 \rceil} ( x+y)^{\lceil (p^e -1) a_3 \rceil} \right)^{\frac{p^{ne} -1}{p^e -1}}$. Now, using that $\lceil  (p^{ne} -1) a_i \rceil \leq \lceil (p^e -1) a_i \rceil \frac{p^{ne} -1}{p^e -1}$ we obtain a non-zero coefficient $x^{i'} y^{j'}$ of $x^{\lceil (p^{ne} -1) a_1 \rceil} y^{\lceil (p^{ne} -1) a_2 \rceil} ( x+y)^{\lceil (p^{ne} -1) a_3 \rceil}$, such that $i' + \frac{p^{ne}-1}{p^e -1},j' + \frac{p^{ne}-1}{p^e -1} \leq p^{ne}-1$. Conclude then the surjectivity of \autoref{eq:globall_F_regular_exercise} for $e$ replaced by $ne$ when $n \gg 0$.}
\end{exercise}

\begin{exercise}
\label{exc:explicit_gfr}
Show that $\left( \bP^1 , \frac{1}{2} P_1 + \frac{1}{2} P_2 + \frac{n-1}{n} P_3 \right)$ is globally $F$-regular if and only if $p \neq 2$.
\Hint{Apply the previous exercise. You can make your life easier by permuting the points.}
\end{exercise}

%

\begin{exercise}
\label{exc:globally_F_regular_all_e}
Show that if $X$ is globally $F$-regular, then the splitting condition of the definition happens for all $e \gg 0$ (where the bound  depends on $D$). That is, for each effective divisor $D$, there is an integer $e_D$, such that $\sO_X \to F_*^e \sO_X (\lceil (p^e -1) \Delta \rceil + D) $ admits a splitting for all integers $e \geq e_D$.
\Hint{The splitting $\phi_e$ for $e$ gives a splitting for $2e$ by restricting $\phi_e \circ F^e_* ( \phi_e)$ from \\ $F_*^{2e} \sO_X \left(\lceil (p^{e} -1) \Delta \rceil + p^e \lceil (p^{e} -1) \Delta \rceil +  \frac{p^{2e}-1}{p^e-1}D \right)$ to  $F_*^{2e} \sO_X (\lceil (p^{2e} -1) \Delta \rceil + D)$ (it has to be used somewhere that $F_*$ of a reflexive sheaf is reflexive, which can be deduced from \cite[1.12]{Hartshorne_Generalized_divisors_on_Gorenstein_schemes}). Then by induction one obtains by a similar method  a splitting  for every divisible enough $e$. To obtain it for every big enough $e$, replace $D$ by $D':= \lceil \Delta \rceil + D$.  The same  argument then yields a splitting $\psi_n$ of $\sO_X \to F_*^{ne} \sO_X \left(\lceil  (p^{ne} -1) \Delta \rceil +  \frac{p^{ne}-1}{p^e-1}D' \right)$. Finally note that by the choice of $D'$, for every $r \geq e$, there is a natural inclusion $F^r_* \sO_X\left(\lceil  (p^{r} -1) \Delta \rceil +  D \right)$ into $F^{me}_* \sO_X \left(\lceil  (p^{me}
-1) \Delta \rceil +  \frac{p^{me}-1 }{p^e-1}D' \right)$, where $m = \left\lceil \frac{r}{e} \right\rceil$. Restricting $\psi_m$ via this inclusion yields the desired splitting for every $r \geq e$.}
\end{exercise}

\begin{exercise}
Show that in the definition of globally $F$-regular we could have assumed $D$ to be an ample Cartier divisor as long as $X$ is projective.
\Hint{Use a similar argument to that in \autoref{exc:globally_F_regular_all_e}: the key is that by replacing $e$ by $ne$ we may replace $D$ by $\frac{p^{ne}-1}{p^e-1} D$.}
\end{exercise}

\begin{exercise}
\label{ex:Hara_gfr_def}
Show that $(X, D)$ is globally $F$-regular if and only if for every effective $\bZ$-divisor $D$, the natural map $\sO_X \to F_*^e \sO_X ( \lfloor p^e  \Delta \rfloor + D) $ admits a splitting for some integer $e>0$.
\Hint{Since we have splitting for all $D$ we may swallow into $D$ any difference in the expression that is bounded as $e$ grows. Now note that
\begin{equation*}
-\Delta  \leq \lceil (p^e -1) \Delta \rceil - \lfloor p^e  \Delta \rceil \leq 2 \lceil \Delta \rceil - \Delta .
\end{equation*}
}
\end{exercise}

\begin{remark}
The analogous statement to \autoref{ex:Hara_gfr_def} does not hold for globally $F$-split pairs.
\end{remark}

\begin{exercise}
\label{exc:dual_form_of_global_F_split_F_regular}
Show that $(X, \Delta)$ is globally $F$-regular if and only if for all effective divisors $D$ the natural map
\begin{equation*}
H^0 \Big(X, F^e_* \sO_X \big(\lfloor (1-p^e) (K_X + \Delta) \rfloor -D\big)\Big) \to H^0(X, \sO_X)
\end{equation*}
is surjective  for some $e>0$. Recall that the natural map $F^e_* \sO_X (\lfloor (1-p^e) (K_X + \Delta) \rfloor -D) \to \sO_X$ is constructed as the composition
\begin{equation*}
F^e_* \sO_X (\lfloor (1-p^e) (K_X + \Delta) \rfloor -D) \to F^e_* \sO_X ( (1-p^e) K_X ) \to \sO_X,
\end{equation*}
 where the latter map is the twist of the Grothendieck trace map by $\sO_X(-K_X)$ using the projection formula (and that all the sheaves involved are $S_2$).

Similarly show that  $(X, \Delta)$ is globally $F$-regular if and only if for all effective divisors $D$ the natural map
\begin{equation*}
H^0 \Big(X, F^e_* \sO_X \big( \lceil (1-p^e) K_X -  p^e \Delta \rceil -D\big)\Big) \to H^0(X, \sO_X)
\end{equation*}
is surjective  for some $e>0$.
\Hint{use duality theory, that is, apply $R \Hom( \_, \omega_X)$ to $\sO_X \to F_*^e \sO_X (\lceil (p^e -1) \Delta \rceil + D) $ and then apply also $\_ \otimes \sO_X(-K_X)$. For the second claim use \autoref{ex:Hara_gfr_def}.}
\end{exercise}

\begin{definition}
A pair  $(X, \Delta)$ is klt (Kawamata log terminal) if $K_X + \Delta$ is $\bQ$-Cartier and for all birational morphisms from normal varieties $f : Y \to X$, $\lceil K_Y - f^*(K_X + \Delta) \rceil \geq 0$, where $K_Y$ and $K_X$ are chosen such that $f_* K_Y = K_X$.
\end{definition}

\begin{definition}
A pair  $(X, \Delta)$ is lc (log canonical) if for all birational morphisms from normal varieties $f : Y \to X$, $ K_Y - f^*(K_X + \Delta) \geq -E $ for some reduced divisor $E$, where $K_Y$ and $K_X$ are chosen such that $f_* K_Y = K_X$.
\end{definition}

\begin{remark}
If $(X, \Delta)$ admits a log-resolution $Y \to X$, then it is enough to require the above conditions only for one log-resolution $f : Y \to X$ \cite[2.32]{Kollar_Mori_Birational_geometry_of_algebraic_varieties}. In particular, this applies to dimensions at most three by \cite{Cutkosky_Resolution_of_singularities_for_3_folds_in_positive_characteristic,Cossart_Piltant_Resolution_of_singularities_of_threefolds_in_positive_characteristic_I,Cossart_Piltant_Resolution_of_singularities_of_threefolds_in_positive_characteristic_II}.
\end{remark}

\begin{exercise}
\label{ex:F_pure_implies_log_canonical_Cartier}
Show that for a sharply $F$-pure $X$ (that is, $(X, 0)$ is sharply $F$-pure) with $\omega_X$ a line bundle, $X$ is log canonical.
\Hint{Given a birational morphism $f : Y \to X$ of normal schemes, show first that there is commutative diagram
\begin{equation*}
\xymatrix{
f_* F_*^e (\omega_Y(p^e E)) =F_*^e f_* (\omega_Y (p^e E)) \ar[r] \ar[d] & f_* (\omega_Y(E)) \ar[d] \\
F_*^e \omega_X \ar[r] & \omega_X
}
\end{equation*}
for every integer $e >0$ and any exceptional $\bZ$-divisor $E$. Twist then this diagram with $\omega_X^{-1}$ and conclude the exercise.}
\end{exercise}

\begin{exercise}
\label{ex:changing_Delta}
Show that if $(X, \Delta)$ is globally $F$-split, then there is a $\Delta' \geq \Delta$, such that $(X, \Delta')$ is globally $F$-split, $(p^e-1)(K_X + \Delta')$ is Cartier. Furthermore, show that $\Delta'$ can be chosen so that $(X, \Delta')$ is log Calabi-Yau, that is,  $K_X + \Delta' \sim_{\bQ} 0$.
\Hint{Let $\Gamma$ be  the divisor given by the section of $\sO_X (\lfloor (1-p^e) (K_X + \Delta) \rfloor)$ for an $e>0$ for which there is a splitting. Using the dual picture (\autoref{exc:dual_form_of_global_F_split_F_regular}) show that this induces a splitting of $\sO_X \to F^e_* \sO_X(D + \lceil(p^e -1 ) \Delta \rceil )$. Define then $\Delta':= \frac{D + \lceil(p^e -1 ) \Delta \rceil }{p^e -1}$.}

(Note: There is a corresponding statement with globally $F$-regular and log-Fano, however the proof is more tedious. See \cite[Theorem 4.3]{Schwede_Smith_globally_F_regular_an_log_Fano_varieties}.)

\end{exercise}

\begin{exercise}
\label{exc:SFP_implies_lc}
Show that if $(X, \Delta)$ is a sharply $F$-pure pair such that $K_X + \Delta$ is $\bQ$-Cartier then $(X, \Delta)$ is log canonical.
\Hint{Soup up the argument of \autoref{ex:F_pure_implies_log_canonical_Cartier}. First using \autoref{ex:changing_Delta} assume that $(p^e -1)(K_X + \Delta)$ is $\bQ$-Cartier. Then construct a map
\begin{equation*}
\sF_e:=F^e_* \sO_Y( \lceil K_Y - f^* p^e (K_X+ \Delta) + \varepsilon p^e E \rceil)  \to \sF_0:= \sO_Y( \lceil K_Y - f^*  (K_X+ \Delta) + \varepsilon  E \rceil),
\end{equation*}
where $E$ is the reduced exceptional divisor. Then note, that
\begin{equation*}
f_* \sF_e \cong F^e_* \Big( \sO_X\big( (1-p^e)(K_X + \Delta) \big)   \otimes f_* \big(   \sO_Y( \lceil K_Y - f^* (K_X+ \Delta) + \varepsilon p^e E \rceil)  \big) \Big).
\end{equation*}
Show then that there is a commutative diagram as follows using the commutativity of the diagram on the regular locus.
\begin{equation*}
\left. \raisebox{22pt}{ \xymatrix{
f_* \sF^e \ar[r] \ar[d] & f_* \sF_0 \ar[d] \\
F_*^e \sO_X( (1-p^e)(K_X + \Delta) ) \ar[r] & \sO_X
}} \right) .
\end{equation*}}
\end{exercise}

\begin{exercise}
\label{exc:SFR_implies_klt}
Show that $(X, \Delta)$ is globally $F$-regular if and only if for every effective Cartier divisor $A$, $(X, \Delta + \varepsilon A)$ is globally $F$-split for some rational $\varepsilon>0$. Further, show that  $(X, \Delta)$ is klt if for every effective Cartier divisor $A$, $(X, \Delta + \varepsilon A)$ is log canonical for some $\bQ \ni \varepsilon>0$, and the other direction also holds if $(X, \Delta)$ admits a log-resolution (which is known up to dimension 3). Conclude that if $(X, \Delta)$ is a strongly $F$-regular pair with $K_X + \Delta$ being $\bQ$-Cartier, then $(X, \Delta)$ is klt.
\Hint{For the first statement again use the trick that showed up earlier that by replacing $e$ by $ne$ we may replace $D$ by $\frac{p^{ne}-1}{p^e-1} D$. For the second one given a birational morphism $f : Y \to X$,  choose an $A$ that contains the images of the exceptional divisors of $f$ on $X$. For the backwards direction of the second statement use \cite[Prop 2.36.(2)]{Kollar_Mori_Birational_geometry_of_algebraic_varieties}. For the conclusion use \autoref{exc:SFP_implies_lc}.}

\end{exercise}

Having shown \autoref{exc:SFR_implies_klt}, the question is how far are the two singularity classes. One answer to this is as follows.

\begin{theorem}
\label{thm:klt_SFR_reduction} \cite{Hara_Classification_of_two_dimensional_F_regular_and_F_pure_singularities,Mehta_Srinivas_A_characterization_of_rational_singularities,Hara_Geometric_interpretation_of_tight_closure_and_test_ideals,Smith_The_multiplier_ideal_is_a_universal_test_ideal}
If $(X, \Delta)$ is a klt pair over a field $k_1$ of characteristic zero, then for every model $(X_A, \Delta_A) \to \Spec A$ of $(X, \Delta) \to \Spec k_1$ over a finitely generated $\bZ$-algebra $A$, the set
\begin{equation*}
\{ p \in \Spec A| (X_p, \Delta_p) \textrm{ is strongly $F$-regular } \}
\end{equation*}
is open and dense.

\end{theorem}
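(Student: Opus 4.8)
The plan is to prove openness and density separately; density is the point at which the characteristic-zero vanishing theorems get used. For \emph{openness}, note that strong $F$-regularity of a fibre can be certified by a single Frobenius splitting: by the theory of test elements, $(X_p,\Delta_p)$ is strongly $F$-regular if and only if for some (equivalently any) test element $c$ --- for instance any nonzerodivisor whose vanishing locus lies in the regular locus --- and some $n\gg0$, the map
\[
\phi^n\colon F^{ne}_*\sO_{X_p}\big(\lceil(p^{ne}-1)\Delta_p\rceil+\divisor(c)\big)\longrightarrow\sO_{X_p}
\]
of \autoref{thm.HSL} is surjective. Spreading $c$ and this map out over a Zariski neighbourhood of $p$ in $\Spec A$, and using that surjectivity of a morphism of coherent sheaves is an open condition (Nakayama) while ``$\divisor(c)$ lies in the regular locus'' is likewise open, one concludes that every fibre over a small enough neighbourhood of $p$ is again strongly $F$-regular; hence the locus in question is open in $\Spec A$.

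For \emph{density}, since strong $F$-regularity is a local property, $X_A$ has a finite affine cover, and a finite intersection of open dense subsets of $\Spec A$ is again open dense, we may assume $X$ affine. Over $k_1$ pick a log resolution $f\colon Y\to X$ of $(X,\Delta)$ with reduced exceptional divisor $E$. Because $(X,\Delta)$ is klt the multiplier ideal is trivial,
\[
f_*\sO_Y\big(\lceil K_Y-f^*(K_X+\Delta)\rceil\big)=\sO_X,
\]
and --- this is the essential point --- there is local vanishing $R^if_*\sO_Y(G)=0$ for all $i>0$, where $G$ is the round-up of $K_Y-f^*(K_X+\Delta)$ perturbed by a small multiple $\varepsilon E$ of the exceptional divisor, with $\varepsilon\in\bQ_{>0}$ chosen small enough that still $f_*\sO_Y(G)=\sO_X$; both facts are consequences of Kawamata--Viehweg vanishing in characteristic zero. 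Spreading the whole configuration ($X$, $\Delta$, $Y$, $f$, $E$, $G$, the two displayed equalities and the vanishing) out over a finitely generated $\bZ$-subalgebra of $A$ and passing to a dense open $U\subseteq\Spec A$, we may assume that for every closed point $p\in U$ the morphism $f_p\colon Y_p\to X_p$ is a log resolution enjoying all the same properties, with reductions $E_p$, $G_p$. Since $\Spec A$ is a model of something of characteristic zero it dominates $\Spec\bZ$, so $U$ contains a dense set of closed points of arbitrarily large residue characteristic.

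Fix such a $p$ and an arbitrary effective $\bZ$-Weil divisor $D$ on $X_p$. Pulling $D$ back to $Y_p$, twisting the trace of Frobenius $\Tr_{F^e}\colon F^e_*\omega_{Y_p}\to\omega_{Y_p}$ (surjective since $Y_p$ is regular) by $G_p+f_p^*D$, pushing forward along $f_p$, and twisting by $\sO_{X_p}(-K_{X_p})$ and $F^e_*\sO_{X_p}((1-p^e)\Delta_p)$ --- the diagram chase of \autoref{prop.MapDivisorCorrespondence} and \autoref{exc:dual_form_of_global_F_split_F_regular} carried out relatively over $X_p$ --- one obtains, using the spread-out local vanishing for $G_p$ to get surjectivity after $f_{p,*}$, a surjective $\sO_{X_p}$-linear map
\[
F^e_*\sO_{X_p}\big(\lfloor(1-p^e)(K_{X_p}+\Delta_p)\rfloor-D\big)\longrightarrow\sO_{X_p}
\]
for $e\gg0$; by Grothendieck duality for the finite morphism $F^e$ this is the same datum as a splitting of $\sO_{X_p}\to F^e_*\sO_{X_p}(\lceil(p^e-1)\Delta_p\rceil+D)$. (The $\varepsilon E$ built into $G$ --- cf.\ the remark relating $\sigma$ to multiplier ideals --- is precisely what upgrades a bare $F$-splitting to this ``for every $D$'' statement.) As $D$ was arbitrary, $(X_p,\Delta_p)$ is strongly $F$-regular. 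Hence $U$, and with it the dense set of its large-characteristic closed points, lies in the strongly $F$-regular locus, which is therefore dense.

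The crux, and the only genuinely hard point, is the surjectivity invoked in the previous paragraph: it rests on the fact that Kawamata--Viehweg-style local vanishing --- an analytic, characteristic-zero input --- nonetheless survives reduction modulo every sufficiently large prime. Carrying out that spreading-out is exactly where one pays for the failure of vanishing theorems in positive characteristic, and it is the technical heart of \cite{Hara_Geometric_interpretation_of_tight_closure_and_test_ideals,Smith_The_multiplier_ideal_is_a_universal_test_ideal,Hara_Classification_of_two_dimensional_F_regular_and_F_pure_singularities,Mehta_Srinivas_A_characterization_of_rational_singularities}.
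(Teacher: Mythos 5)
The paper does not actually prove this theorem: it states it with citations to Hara, Mehta--Srinivas, and Smith, and explicitly declines to pursue it (``Since the primary focus of these notes is the fixed characteristic situation, instead of going into the direction of \autoref{thm:klt_SFR_reduction}\ldots''). So the relevant question is whether your sketch is a correct self-contained argument, and there I see a real gap in the density half.

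The gap is in the claim ``using the spread-out local vanishing for $G_p$ to get surjectivity after $f_{p,*}$.'' What does spread out cleanly from characteristic zero is the single coherent vanishing $R^i f_*\sO_Y(G)=0$ for $i>0$: since these $R^i f_*$ are coherent and vanish over the generic (characteristic-zero) fiber, they vanish over a dense open $U\subseteq\Spec A$. But that is \emph{not} the vanishing your argument uses. To push the twisted trace $F^e_*\bigl(\omega_{Y_p}\otimes\sL^{p^e}\bigr)\to\omega_{Y_p}\otimes\sL$ forward along $f_p$ and keep surjectivity, you need $R^1f_{p,*}$ of the \emph{kernel} of that map to vanish, and this kernel has rank $p^{e\dim Y}-1$ and grows with $e$; concretely you need vanishing of $R^1 f_{p,*}$ of sheaves of the form $\sO_{Y_p}\bigl(\lceil(1-p^e)K_{Y_p}-p^e(\cdots)\rceil\bigr)$, with a $p^e$ in the twist, uniformly in $e\gg 0$. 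That is precisely the $e$-dependent hypothesis \autoref{itm:technical:vanishing} of the paper's \autoref{prop:technical}, and it is a genuinely positive-characteristic statement (essentially the Frobenius-stable Grauert--Riemenschneider vanishing of \autoref{qtn:Frobenius_stable_Grauert_Riemenschneider}). It does not follow from spreading out a fixed $R^if_*\sO_Y(G)=0$. The actual input in the cited references is Hara's surjectivity theorem, whose proof requires lifting $(Y_p,E_p)$ to $W_2(k(p))$ and the Deligne--Illusie--Raynaud degeneration of the Hodge--de Rham spectral sequence --- available for $p\gg 0$ because the characteristic-zero resolution lifts, but a different mechanism from ``spreading out a coherent vanishing.'' (In dimension two one can dodge this with the characteristic-free KKMSD vanishing, \autoref{thm:vanishing}, which is exactly what the paper exploits in \autoref{application:singularity}; that escape route is special to surfaces.) Your closing paragraph shows you sense that this is the crux, but the mechanism you describe --- spreading out the characteristic-zero local vanishing --- is not the mechanism that actually carries it, and the argument as written does not close.

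A smaller concern: the ``openness'' step spreads out a single splitting $\phi^n$ and invokes Nakayama. But nearby closed points of $\Spec A$ generally have \emph{different} residue characteristics, and $\phi^n$ involves $F^{ne}$, which means different things at different residue characteristics; a fixed splitting at a point of characteristic $\ell$ does not spread to a splitting at a nearby point of characteristic $\ell'\neq\ell$. What the references actually prove is the existence of a dense open $U\subseteq\Spec A$ (over whose closed points F-regularity holds fiberwise, established by the DIR argument at each large $p$), not openness of the set of good closed points via a spread-out Frobenius splitting.
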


Since the primary focus of these notes is the fixed characteristic situation, instead of going into the direction of \autoref{thm:klt_SFR_reduction}, we would like to understand the difference between klt and strongly $F$-regular in positive characteristic. This is mostly known only for surfaces \cite{Hara_Classification_of_two_dimensional_F_regular_and_F_pure_singularities}, which we will discuss thoroughly below. The higher dimensional case, as well as some surface questions on slc singularities are posed as research questions below.

So, take a klt surface singularity $X$. In any characteristic $X$ admits a minimal resolution $f: Y \to X$. The exceptional divisor of $f$ is a normal crossing curve, with all components being smooth rational curves and the dual graph of which is star shaped. A list of the possible dual graphs can be found on \cite[page 50-51]{Hara_Classification_of_two_dimensional_F_regular_and_F_pure_singularities}. It is presumably easier to work on $Y$ instead of $X$, since it is smooth with well-described normal crossing divisors. The next exercise is the first step in this direction.

\begin{exercise}
\label{exc:globally_F_regular_birational}
Let $f : (Y, \Gamma) \to (X, \Delta)$ be a proper birational morphism of pairs of dimension at least two, that is, we require $f_* \Gamma = \Delta$ (e.g., $Y$ is a resolution either of a variety or of a local ring of a variety $X$). Show the following statements.
\begin{enumerate}
\item Let $H \geq 0$ be a Cartier divisor on $X$ and $\Gamma \geq 0$ a divisor for which $f_* \Gamma=\Delta$. Assume further that
\begin{equation*}
H^0(Y, F^e_* \sO_Y(\lfloor (1-p^e)(K_Y+\Gamma) \rfloor - f^* H )) \to H^0(Y, \sO_Y)
\end{equation*}
is surjective for some integer $e > 0$. Then for the same value of $e$,
\begin{equation*}
H^0(X, F^e_* \sO_X(\lfloor (1-p^e)(K_X+\Delta) \rfloor - H )) \to H^0(X, \sO_X)
\end{equation*}
is surjective as well.

\item If $(Y, \Gamma)$ is globally $F$-split then so is $(X, \Delta)$.
\item If $(Y, \Gamma)$ is globally $F$-regular then so is $(X, \Delta)$.
\item Give a counterexample to the converse of the previous statement
\Hint{Fermat cubic surface.}
\end{enumerate}

\end{exercise}

\begin{proposition}
\label{prop:technical}
Consider the following situation:
\begin{enumerate}
\item \label{itm:technical:factorial} $(Y,D)$ a pair such that $Y$ is regular and $D$ is normal crossing (e.g., $Y$ is smooth over $k$),
\item \label{itm:technical:effective}  $G \geq 0$ a divisor on $Y$,
\item \label{itm:technical:center} $E$ is a prime divisor on $Y$, such that $\coeff_E D =1$,
\item \label{itm:technical:globallly_F_regular} $(E, (D -E)|_E)$ is globally $F$-regular,
\item \label{itm:technical:vanishing} for every integer $n \geq 0$,   $H^1(Y, \lceil (1-p^e)K_Y - p^e D \rceil  - G + n E) = 0$ for every $e \gg 0$.
\end{enumerate}
Then, the image of $H^0(Y, F^e_* \sO_Y( \lceil (1-p^e)K_Y - p^e D \rceil  - G + (n+1) E)) \to H^0(Y, \sO_Y)$ maps surjectively onto $H^0(E, \sO_E)$ for all $e \gg 0$, where $n$ is the biggest integer such that $nE \leq G$.
\end{proposition}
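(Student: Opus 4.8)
The plan is to run the section-lifting argument of \autoref{ex.ExtendingSections}, but on the regular variety $Y$ and with respect to the prime divisor $E$, using the hypotheses to guarantee the cohomological vanishing that makes the relevant connecting map surjective. Write $D = E + D'$ where $D' = D - E \geq 0$ has no $E$ in its support (so $D'|_E$ makes sense), and let $n$ be the largest integer with $nE \leq G$. The basic mechanism is the short exact sequence of sheaves on $Y$
\[
0 \to \sO_Y\big(\lceil (1-p^e)K_Y - p^e D \rceil - G + nE\big) \to \sO_Y\big(\lceil (1-p^e)K_Y - p^e D \rceil - G + (n+1)E\big) \to \sO_E(\text{stuff}) \to 0,
\]
where the quotient is the restriction of the middle term to $E$. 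Applying $F^e_*$ (exact), taking global sections, and using hypothesis \ref{itm:technical:vanishing} to kill the $H^1$ of the left-hand term, one gets that
\[
H^0\big(Y, F^e_*\sO_Y(\lceil (1-p^e)K_Y - p^e D \rceil - G + (n+1)E)\big) \twoheadrightarrow H^0\big(E, F^e_*(\text{the restricted sheaf})\big)
\]
is surjective for $e \gg 0$. So the first key step is to identify the restricted sheaf on $E$ and check, via adjunction $K_E = (K_Y + E)|_E$ together with $\coeff_E D = 1$, that on $E$ it is exactly $F^e_* \sO_E(\lceil (1-p^e)K_E - p^e D'|_E \rceil - (\text{something effective or zero}))$ — i.e.\ the sheaf computing global $F$-regularity of $(E, D'|_E)$ in the dual form of \autoref{exc:dual_form_of_global_F_split_F_regular}.

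The second key step is to use hypothesis \ref{itm:technical:globallly_F_regular}: since $(E, (D-E)|_E)$ is globally $F$-regular, by \autoref{exc:dual_form_of_global_F_split_F_regular} the natural map $H^0(E, F^e_*\sO_E(\lceil (1-p^e)K_E - p^e D'|_E \rceil - D_E)) \to H^0(E,\sO_E)$ is surjective for some $e$, hence (replacing $e$ by a multiple, using the trick of \autoref{exc:globally_F_regular_all_e} that lets one absorb $D_E$ and enlarge $e$ freely) for all $e \gg 0$. Composing this with the surjection from the previous paragraph, and chasing the commutative square relating the trace maps on $Y$ and on $E$ (this is the $F$-adjunction / $F$-different compatibility, which in the smooth-plus-simple-normal-crossing setting is the content of the exercises following \autoref{ex.ExtendingSections}, using that $\coeff_E D = 1$ so that the $F$-different is precisely $D'|_E$), one concludes that the image of $H^0(Y, F^e_*\sO_Y(\lceil(1-p^e)K_Y - p^eD\rceil - G + (n+1)E)) \to H^0(Y,\sO_Y)$ surjects onto $H^0(E,\sO_E)$.

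I expect the main obstacle to be the bookkeeping in the first step: pinning down the restricted sheaf on $E$ exactly and verifying that the diagram of trace maps commutes. One must check that $\lceil(1-p^e)K_Y - p^eD\rceil + (n+1)E$ restricted to $E$, combined with the adjunction twist, yields $\lceil(1-p^e)K_E - p^e(D-E)|_E\rceil$ plus an effective divisor of bounded degree (bounded independently of $e$, coming from $G$ and from rounding), so that it can be absorbed into the ``$D$'' of the global $F$-regularity criterion for $E$; the normal-crossing hypothesis \ref{itm:technical:factorial} and $\coeff_E D = 1$ are exactly what make the adjunction and the restriction of the trace behave correctly. The vanishing hypothesis \ref{itm:technical:vanishing} is tailored precisely so that the left-hand term of the defining sequence contributes nothing to $H^1$, so that step is automatic once the sequence is set up; and the ``for all $e \gg 0$'' upgrades on both $Y$ and $E$ are routine given the composition trick already used repeatedly in the preceding exercises.
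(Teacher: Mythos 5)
Your proposal is correct and matches the paper's proof: the paper sets up exactly the same commutative diagram, deduces surjectivity of the restriction map from the short exact sequence plus hypothesis (e), deduces surjectivity of the trace on $E$ from hypothesis (d), and concludes by the diagram chase. The bookkeeping concerns you flag about identifying the restricted sheaf and verifying the compatibility of traces are real, but the paper handles them just as you anticipate (noting $\coeff_E(\lfloor p^e D\rfloor - (p^e-1)E) = 1$ explains the $(n+1)E$ twist, and the restricted sheaf works out to $\sO_E(\lceil (1-p^e)K_E - p^e(D-E)|_E\rceil - (G-nE)|_E)$).
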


\begin{proof}
By \autoref{itm:technical:center},  $E$ is what we will call in Section  6.2 an $F$-pure center of $(X,D)$; all that is needed here is the commutativity of certain diagrams similar to those in Exercises 3.14 and 3.15.
Let $n$ be the biggest integers such that $nE \leq G$. Consider the following commutative diagram (note that $ \coeff_E \left( \lfloor p^e D \rfloor - (p^e -1)E \right) = 1$, hence the appearance of $n+1$ instead of $n$):
\begin{equation*}
\hspace{-30pt}
\xymatrix{
H^0(Y, F^e_* \sO_Y( \lceil (1-p^e)K_Y - p^e D \rceil  - G + (n+1) E)) \ar[d] \ar[r]^{\beta} & H^0(Y, F^e_* \sO_E( \lceil (1-p^e)K_E  -p^e (D-E)_E \rceil  - (G - n E)_E)) \ar[d] \ar@/^8pc/[dd]^{\alpha} \\
H^0(Y, F^e_* \sO_Y( (1-p^e)(K_Y+E) )) \ar[d] \ar[r] & H^0(Y, F^e_* \sO_E( (1-p^e)K_E )) \ar[d] \\
H^0(Y, \sO_Y) \ar[r] & H^0(E, \sO_E)}
\end{equation*}
We need to prove that both $\alpha$ and $\beta$ are surjective. The surjectivity of $\alpha$ follows from the global $F$-regularity assumption and the surjectivity of $\beta$ from the long exact sequence of cohomology and assumption \autoref{itm:technical:vanishing}.
\end{proof}

\begin{corollary}
\label{cor:globally_F_regular_with_lifting}
Let $Y$ be a smooth, projective variety over the algebraically closed ground field $k$. Assume further that $-(K_Y + D)$ is ample and $E$ is a codimension one $F$-pure center such that $(E, D-E)|_E)$ is globally $F$-regular. Then $(Y,D)$ and consequently $Y$ is globally $F$-regular.
\end{corollary}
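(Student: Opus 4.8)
The plan is to read this off from \autoref{prop:technical} by passing to the dual description of global $F$-regularity. By \autoref{exc:dual_form_of_global_F_split_F_regular}, the pair $(Y,D)$ is globally $F$-regular as soon as, for every effective divisor $D_0$ on $Y$, there is an $e>0$ for which the trace map
\[
H^0\big(Y,\,F^e_*\sO_Y(\lceil (1-p^e)K_Y-p^eD\rceil-D_0)\big)\longrightarrow H^0(Y,\sO_Y)
\]
is surjective; and since enlarging $D_0$ only shrinks the source sheaf, it suffices to test this for $D_0$ ranging over a cofinal family of effective divisors. Once $(Y,D)$ is known to be globally $F$-regular, so is $Y=(Y,0)$, because the inclusion $F^e_*\sO_Y(D_0)\hookrightarrow F^e_*\sO_Y(\lceil (p^e-1)D\rceil+D_0)$ shows that any splitting witnessing global $F$-regularity of $(Y,D)$ restricts to one for $(Y,0)$.

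The engine is \autoref{prop:technical}, applied to $(Y,D)$ with $E$ the given codimension-one $F$-pure center --- so that $\coeff_E D=1$, which is \autoref{itm:technical:center} --- and with \autoref{itm:technical:globallly_F_regular} being exactly the hypothesis that $(E,(D-E)|_E)$ is globally $F$-regular; its auxiliary divisor $G$ I would take to be (a large enough multiple of) the divisor $D_0$ above, the remaining hypotheses of \autoref{prop:technical} being immediate in the present setup. The conclusion produced is that the image of the trace map surjects onto $H^0(E,\sO_E)$. Since $Y$ is proper and integral over $k=\bar k$ and $E$ is a nonempty effective divisor, $H^0(Y,\sO_Y)=k=H^0(E,\sO_E)$ and the restriction $H^0(Y,\sO_Y)\to H^0(E,\sO_E)$ is an isomorphism; hence surjecting onto $H^0(E,\sO_E)$ is the same as the trace map being surjective onto $H^0(Y,\sO_Y)$. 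Running this for all admissible $G$, and reconciling the divisor $-G+(n+1)E$ delivered by \autoref{prop:technical} with the shape $-D_0$ demanded by \autoref{exc:dual_form_of_global_F_split_F_regular} by means of the self-improvement trick of \autoref{exc:globally_F_regular_all_e} (replacing $e$ by $ne$ replaces $D_0$ by $\tfrac{p^{ne}-1}{p^e-1}D_0$, which both absorbs bounded discrepancies and reduces to checking a cofinal family), yields the claim.

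The one point needing real work --- and the main obstacle --- is hypothesis \autoref{itm:technical:vanishing}: for each fixed $n\ge 0$ one must have $H^1(Y,\lceil (1-p^e)K_Y-p^eD\rceil-G+nE)=0$ for all $e\gg 0$. I would write $\lceil (1-p^e)K_Y-p^eD\rceil = K_Y-p^e(K_Y+D)+(p^eD-\lfloor p^eD\rfloor)$, so that the divisor in question differs from $p^e\cdot(-(K_Y+D))$ by a divisor whose coefficients stay in a fixed bounded range as $e$ varies --- the fractional parts $\{p^eD\}$ take only finitely many values because $D$ is a $\bQ$-divisor, while $G$ and $n$ are fixed. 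Since $-(K_Y+D)$ is ample, the positivity of $p^e\cdot(-(K_Y+D))$ grows without bound, and by Serre vanishing $H^1$ of its twist by any member of this finite family of correction sheaves vanishes once $p^e$ --- hence $e$ --- is large; the uniformity (all $e\ge e_0$, not merely infinitely many $e$) is precisely what \autoref{itm:technical:vanishing} asks, and it is the only place where one must be genuinely careful, namely in checking that the correction family is really finite. By contrast, the passage to the dual picture, the identification of the hypotheses of \autoref{prop:technical}, the $H^0=k$ observation, and the floor/ceiling and $-G+(n+1)E$ versus $-D_0$ bookkeeping are all formal.
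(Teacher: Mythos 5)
The structure of your argument --- the dual characterization, \autoref{prop:technical}, and Serre/Fujita vanishing for hypothesis (e) via ampleness of $-(K_Y+D)$ --- is correct and is indeed the intended derivation. But the reconciliation step you defer to the end is not merely bookkeeping, and as phrased it cannot be made to work. The divisor $-G+(n+1)E$ delivered by \autoref{prop:technical} always has $E$-coefficient exactly $+1$ (since $n = \coeff_E G$). For the dual form of global $F$-regularity of $(Y,D)$ you would need $-G+(n+1)E \le -D_0$ for the given effective $D_0$, which forces $\coeff_E D_0 \le -1$: impossible. The self-improvement trick of \autoref{exc:globally_F_regular_all_e} rescales divisors but never flips the sign of this coefficient, so it cannot close this gap.

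In fact, the literal conclusion ``$(Y,D)$ is globally $F$-regular'' is false as stated: since $E$ is a codimension-one $F$-pure center, $\coeff_E D = 1$, so $\lfloor D\rfloor \ne 0$ and $(Y,D)$ cannot be klt; by \autoref{exc:SFR_implies_klt} it then cannot be strongly $F$-regular, a fortiori not globally $F$-regular. (Directly: near the generic point of $E$, the map $\sO_Y \to F^e_*\sO_Y(\lceil(p^e-1)D\rceil + E)$ never splits.) What is provable --- and what the exercise immediately following the corollary actually uses --- is that $Y = (Y,0)$ is globally $F$-regular. The correct reconciliation, in place of the self-improvement trick, is the observation that because $\coeff_E D = 1$ one has $\lceil -p^eD\rceil + (n+1)E \le 0$ for all $e \gg 0$, so the sheaf produced by \autoref{prop:technical} sits inside $\sO_Y\big((1-p^e)K_Y - G\big)$; taking $G \ge D_0$ to range over all effective divisors then yields precisely the dual characterization for the boundary-free pair $(Y,0)$. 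This is exactly the maneuver spelled out in \autoref{application:singularity}, where the same mismatch between $-f^*H$ and $-f^*H + nE$ is resolved by noting that the extra $nE$ is swallowed by $\lceil -p^eD\rceil$ once $e$ is large, thanks to $\coeff_E D = 1$.
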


\begin{exercise}
Show that $\bP^n$ is globally $F$-regular.
\Hint{use induction starting with $n=0$, apply \autoref{cor:globally_F_regular_with_lifting} in the inductional step.}
\end{exercise}

\begin{application}
\label{application:singularity}
We apply \autoref{prop:technical} to the situation when $X = \Spec A$ is the local ring of a normal singularity, $f : Y \to X$ is a log-resolution of singularities and $E \subseteq f^{-1}(P)$ is an adequately chosen prime divisor, where $P$ is the closed point of $X$. Assume also that $k$ is algebraically closed.  Since $X$ is local, it is strongly $F$-regular if and only if it is globally $F$-regular. By \autoref{exc:globally_F_regular_birational}, we should prove for every effective Cartier divisor $H$ on $X$ we are supposed to prove that the map
\begin{equation*}
H^0(Y, F^e_* \sO_Y( (1-p^e)K_Y  - f^*H )) \to H^0(Y, \sO_Y) \cong A
\end{equation*}
is surjective. For this it is enough to choose an adequate boundary $D$ on $Y$ (supported on the exceptional divisor), and show that
\begin{equation*}
H^0(Y, F^e_* \sO_Y( \lceil (1-p^e)K_Y - p^e D  \rceil  - f^*H )) \to H^0(Y, \sO_Y) \cong A.
\end{equation*}
At this point we would like to apply \autoref{prop:technical}. However, there we see that instead of $f^* H$ we would need to have $f^* H - nE$. The solution is to find such $D$ and $E$, such that $\lceil -p^eD \rceil + nE \leq 0$ and prove then that
\begin{equation*}
\psi : H^0(Y, F^e_* \sO_Y( \lceil (1-p^e)K_Y - p^e D ) \rceil  - f^*H + nE )) \to H^0(Y, \sO_Y) \cong A.
\end{equation*}
is surjective. Note that the condition $\lceil -p^eD \rceil + nE \leq 0$ is automatically satisfied for every $e \gg 0$, since we have chosen $\coeff_E D =1$. Further, note that the image of this map is an ideal $I$ in $A$. Since $A$ is local, then it is enough to prove that the image of $I$ in $A/m$ (where $m \subseteq A$ is the maximal ideal) is non-zero. That is, we want to prove that the image of $\psi$ maps surjectively onto $H^0(E, \sO_E)$. So, we just need to check that the conditions of \autoref{prop:technical} apply for adequately chosen $D$ and $E$.

Out of the conditions of \autoref{prop:technical}, \eqref{itm:technical:factorial} and \eqref{itm:technical:effective}  are automatically satisfied. For condition \eqref{itm:technical:center} we just need to choose $E$ to be one of the exceptional prime divisors in $D$ that has coefficient $1$. The other two conditions are harder, so we just summarize our findings:

$X$ is strongly $F$-regular, if for every effective Cartier divisor $H$ on $X$ we can find an exceptional effective divisor $D$, a component $E$ of $D$ with coefficient $1$  such that
\begin{enumerate}
\item $(E, (D -E)|_E)$ is globally $F$-regular,
\item for every integer $n \geq 0$,   $H^1(Y, \lceil (1-p^e)K_Y - p^e D) \rceil  - f^*H + n E) = 0$ for every $e \gg 0$.
\end{enumerate}
\end{application}

We show how \autoref{application:singularity} works for a two dimensional klt singularity. So, let $X$ be such a singularity and $f : Y \to X$ its minimal resolution. We want to use the following theorem.

\begin{theorem} \cite[2.2.1]{Kollar_Kovacs_Birational_geometry_of_log_surfaces}
\label{thm:vanishing}
If $f : Y \to X$ is a log resolution of a two dimensional normal  scheme essentially of finite type over $k$, and $L$ is an $f$-nef exceptional $\bQ$-divisor, then $H^1(Y, K_Y + \lceil L \rceil )= 0$.
\end{theorem}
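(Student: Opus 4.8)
The plan is to reduce to a relative vanishing statement, use the negativity lemma to reformulate it as a relative Kawamata--Viehweg vanishing on the smooth surface $Y$, and prove the latter by a d\'evissage along the exceptional curves with Grauert--Riemenschneider vanishing as the base case.

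\emph{Reduction.} The assertion is local on $X$, so I may assume $X = \Spec R$ with $R$ a complete two dimensional normal local ring and closed point $x$. Since $f$ is proper birational with fibres of dimension $\le 1$, one has $R^i f_* = 0$ for $i \ge 2$, and the Leray spectral sequence gives $H^1(Y,\mathcal{F}) = \Gamma(X, R^1 f_* \mathcal{F})$ for coherent $\mathcal{F}$; as $R^1 f_*\mathcal{F}$ is supported at $x$, the theorem is equivalent to $R^1 f_*\mathcal{O}_Y(K_Y + \lceil L\rceil) = 0$. By the negativity lemma \cite{Kollar_Mori_Birational_geometry_of_algebraic_varieties}, an $f$-nef $f$-exceptional divisor has $-L$ effective, so $\lceil L\rceil \le 0$; write $\lceil L\rceil = -B$ with $B \ge 0$ an effective $f$-exceptional integral divisor, and note $\Theta := \lceil L\rceil - L = \{-L\}$ has all coefficients in $[0,1)$, so $(Y,\Theta)$ is klt and $(K_Y + \lceil L\rceil) - (K_Y + \Theta) = L$ is $f$-nef. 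Thus we must prove relative Kawamata--Viehweg vanishing for the klt pair $(Y,\Theta)$ on the smooth surface $Y$.

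\emph{D\'evissage.} I argue by induction, peeling one exceptional prime divisor at a time. Base case $B = 0$: then $R^1 f_*\mathcal{O}_Y(K_Y + \lceil L\rceil) = R^1 f_*\omega_Y = 0$ by Grauert--Riemenschneider for resolutions of surfaces, which is valid in every characteristic. Inductive step: negative-definiteness of the exceptional intersection form gives $B^2 < 0$, so some component $E$ of $B$ satisfies $B\cdot E < 0$, i.e.\ $\lceil L\rceil \cdot E = -B\cdot E \ge 1$. Restricting the exact sequence
\[
0 \to \mathcal{O}_Y(K_Y + \lceil L\rceil) \to \mathcal{O}_Y(K_Y + \lceil L\rceil + E) \to \mathcal{O}_E(K_Y + \lceil L\rceil + E) \to 0
\]
to $E$ and using adjunction on the smooth curve $E$, $(K_Y + E)\cdot E = 2g(E) - 2$, together with $\lceil L\rceil\cdot E \ge 1$, the line bundle $\mathcal{O}_E(K_Y + \lceil L\rceil + E)$ has degree $\ge 2g(E) - 1 > \deg\omega_E$, so its first cohomology vanishes. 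Feeding this, the inductive vanishing of the middle term, and surjectivity of the restriction map on global sections (discussed next) into the long exact sequence yields $R^1 f_*\mathcal{O}_Y(K_Y + \lceil L\rceil) = 0$.

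\emph{Closing the induction, and the main obstacle.} Two points need care, and are exactly where the $f$-nefness of $L$ is used and where positive characteristic costs nothing. First, $\lceil L\rceil + E = \lceil L + E\rceil$ need not be the round-up of an $f$-nef class ($L + E$ usually fails to be $f$-nef), so the inductive hypothesis does not apply to the middle term verbatim; I would remedy this by running the whole induction along a path from $L$ to $0$ chosen generically inside the cone of $f$-nef $\bQ$-divisors supported on $\mathrm{Exc}(f)$ --- a full-dimensional rational polyhedral cone, precisely because the exceptional form is negative-definite --- so that $\lceil L_t\rceil$ jumps by a single prime divisor at each of finitely many thresholds, every divisor met is a genuine round-up of an $f$-nef class, and at each threshold the fractional part of $-L_t$ vanishes along the curve being peeled off, which (with $L_t$ $f$-nef, hence $L_t\cdot E \ge 0$) is exactly what preserves the inequality $\lceil L_t\rceil \cdot E \ge 1$. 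Second, the surjectivity $H^0(Y, K_Y + \lceil L\rceil + E) \twoheadrightarrow H^0(E, \mathcal{O}_E(K_Y + \lceil L\rceil + E))$ is obtained by extending the section off the exceptional fibre one infinitesimal neighbourhood at a time --- each obstruction lying in the first cohomology of a line bundle of the form $\mathcal{O}_{E_i}(K_Y + \lceil L\rceil + E - W)$ with $W \ge 0$ exceptional, which one keeps of degree $\ge 2g(E_i) - 1$ by peeling the thickening in an order adapted to the negative-definite form --- and then algebraising the resulting formal section by Grothendieck existence. The main obstacle is precisely this combinatorial bookkeeping on the exceptional configuration; once it is in place, the argument uses nothing beyond resolution of surface singularities, Grauert--Riemenschneider for surfaces, Riemann--Roch on curves, and the negativity lemma, all of which hold over an arbitrary perfect field --- which is why the relative statement survives in positive characteristic even though absolute Kawamata--Viehweg vanishing does not.
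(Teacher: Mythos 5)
The paper does not give a proof of this theorem; it cites it from Koll\'ar--Kov\'acs, so there is nothing internal to compare against. Your proposal aims for a self-contained argument in the right spirit (reduce to $R^1f_*=0$ over a complete local base, Grauert--Riemenschneider as the base case, chip exceptional curves via Riemann--Roch), but the induction as you set it up does not close. You deform $L$ along a generic segment $L_t$ in the $f$-nef cone and at each threshold $t^*$ chip off the prime $E$ whose coefficient in $L_{t^*}$ becomes integral, asserting that this forces $\lceil L_{t^*}\rceil\cdot E\ge 1$. It does not: $L_{t^*}\cdot E\ge 0$ and the vanishing of $\{-L_{t^*}\}$ along $E$ only give $\lceil L_{t^*}\rceil\cdot E = L_{t^*}\cdot E + \{-L_{t^*}\}\cdot E \ge 0$, and equality to $0$ certainly occurs (e.g.\ $L_{t^*}$ touches the nef-cone boundary along $E$ and $\{-L_{t^*}\}$ is disjoint from $E$). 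The curve that does satisfy $\lceil L_{t^*}\rceil\cdot E\ge 1$, produced by negative definiteness applied to $-\lceil L_{t^*}\rceil$, is in general a different curve from the one the path designates, and nothing in your construction makes the two coincide. The $H^0$-surjectivity you then need is handled only by a sketch (formal lifting along a computation sequence, Grothendieck existence) which is exactly the delicate step and threatens circularity: the obstruction to lifting a section of $\mathcal{O}_E(K_Y+\lceil L\rceil+E)$ to $Y$ lives in the very group $R^1f_*\mathcal{O}_Y(K_Y+\lceil L\rceil)$ you are trying to kill.

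A cleaner organization of the d\'evissage avoids both problems: keep $\lceil L\rceil$ fixed and induct on effective exceptional cycles $Z>0$, proving $H^1\bigl(Z,\mathcal{O}_Y(K_Y+\lceil L\rceil)|_Z\bigr)=0$, then conclude $R^1f_*=0$ from the theorem on formal functions (these $Z$ are cofinal among the relevant thickenings), so no $H^0$-surjectivity is ever needed. The one lemma required is that every such $Z$ has a component $E\le Z$ with $(\lceil L\rceil - Z)\cdot E>0$, so that $\mathcal{O}_E\bigl((K_Y+\lceil L\rceil)-(Z-E)\bigr)$ has degree $>2g(E)-2$ and one may chip $E$ off $Z$. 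To prove it, set $W:=Z-\{-L\}$ and let $W_+$ be its positive part; since $Z$ has integer coefficients $\ge 1$ on its support and $\{-L\}$ has coefficients in $[0,1)$, one has $\Supp W_+=\Supp Z$, hence $W_+>0$ and $W_+^2<0$, giving a component $E\le Z$ with $W_+\cdot E<0$, and then
\[
(\lceil L\rceil - Z)\cdot E \;=\; L\cdot E - W\cdot E \;\ge\; -W\cdot E \;=\; W_-\cdot E - W_+\cdot E \;>\;0,
\]
using $L\cdot E\ge 0$ ($f$-nefness), $W_-\cdot E\ge 0$ (as $E\notin\Supp W_-$), and $W_+\cdot E<0$. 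I would restructure along these lines rather than along a path in the nef cone.
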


First we work through one concrete example, the canonical $D_5$ surface singularity, and leave the general case for exercises. Consider a canonical $D_5$ surface singularity. So, the dual graph is
\begin{equation}
\label{eq:D_5}
\xymatrix{
E_1  \ar@{-}[dr] \\
& E(=E_0) \ar@{-}[r] & E_3 \ar@{-}[r] & E_4 \\
E_2 \ar@{-}[ur] \\
}
\end{equation}
where all the self-intersections are $-2$ and all the exceptional  curves are smooth rational curves. Let $E$ be the prime divisor corresponding to the center of this graph as shown on \autoref{eq:D_5}. According to \autoref{thm:vanishing} we want
\begin{equation*}
(1-p^e)(K_Y+ D)   - f^* H + n E
\end{equation*}
to be nef for $n\geq 0$ and every $ e \gg 0$ (in particular, $K_Y + D$ to be anti-nef). Further we need $\coeff_E D=1$ and $(E, (D-E)|_E)$ to be globally $F$-regular. It turns out that there is a good choice:
\begin{equation*}
D:= \frac{1}{2}E_1 + \frac{1}{2} E_2 + \frac{2}{3} E_3 + \frac{1}{3} E_4 + E
\end{equation*}
which can be pictured as
\begin{equation}
\label{eq:D_5}
\xymatrix{
\frac{1}{2}  \ar@{-}[dr] \\
& 1 \ar@{-}[r] & \frac{2}{3} \ar@{-}[r] & \frac{1}{3} \\
\frac{1}{2} \ar@{-}[ur] \\
}
\end{equation}
Indeed, the above choice of $D$ and $E$ is adequate, because:
\begin{enumerate}
\item $(E, (D-E)_E) \cong \left(\bP^1, \frac{1}{2}P_1 +  \frac{1}{2}P_2  + \frac{2}{3}P_3 \right)$, which is globally $F$-regular by \autoref{exc:explicit_gfr} if $p \neq 2$.
\item By the adjunction formula $(K+ D) \cdot E_i = -2 - E_i^2 + D \cdot E_i = D \cdot E_i$, therefore $(K + D) \cdot E_i = -2 \coeff_{E_i} D + \sum_{E_i \cap E_j \neq \emptyset} \coeff_{E_j} D$ and hence $(K +D) \cdot E_i = 0$ for $i \neq 0$ and $(K +D) \cdot E = - \frac{1}{3}$.
In particular, $-p^e( K_Y  +  D)  + n E$ is $f$-nef for every $n \geq 0$ and  $e \gg 0$. Further,
\begin{equation*}
 \lceil (1-p^e)K_Y - p^e D) \rceil   - f^*H + n E =  K_Y +  \lceil-p^e( K_Y  +  D)    - f^*H + n E \rceil ,
\end{equation*}
hence by \autoref{thm:vanishing}, $H^1(Y, \lceil (1-p^e)(K_Y+ D) \rceil   - G + n E) = 0$  for every $n \geq 0$ and $e \gg 0$.

\end{enumerate}

So, we have just proven that:

\begin{theorem} \cite{Hara_Classification_of_two_dimensional_F_regular_and_F_pure_singularities}
A klt surface singularity of type $D_5$ is strongly $F$-regular if $p \neq 2$.
\end{theorem}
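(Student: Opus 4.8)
The plan is to invoke \autoref{application:singularity}, which via \autoref{exc:globally_F_regular_birational} reduces strong $F$-regularity of the local ring $X$ to a lifting/vanishing statement on the minimal resolution $f\colon Y\to X$. The exceptional locus of $f$ is a normal crossing union of five smooth rational $(-2)$-curves forming the $D_5$ dual graph, with centre $E=E_0$ meeting $E_1,E_2$ and the first curve $E_3$ of the tail $E_3$--$E_4$. By \autoref{application:singularity} it is enough to produce, for each effective Cartier divisor $H$ on $X$, an effective exceptional $\bQ$-divisor $D$ on $Y$ together with a prime component $E$ of coefficient $1$ such that $(E,(D-E)|_E)$ is globally $F$-regular and $H^1\!\left(Y,\lceil(1-p^e)K_Y-p^eD\rceil-f^*H+nE\right)=0$ for every $n\ge 0$ and all $e\gg 0$.

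The first step is the choice
\[
D:=\tfrac12 E_1+\tfrac12 E_2+\tfrac23 E_3+\tfrac13 E_4+E,\qquad E:=E_0,
\]
so that $\coeff_E D=1$ holds automatically. Adjunction on $E\cong\bP^1$ identifies $(E,(D-E)|_E)$ with $\left(\bP^1,\tfrac12 P_1+\tfrac12 P_2+\tfrac23 P_3\right)$, where $P_1,P_2,P_3$ are the points $E\cap E_1$, $E\cap E_2$, $E\cap E_3$. By \autoref{exc:explicit_gfr} this pair is globally $F$-regular precisely when $p\neq 2$, which is exactly where the hypothesis enters.

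For the vanishing condition I would compute intersection numbers on $Y$. From $E_i^2=-2$ and adjunction, $(K_Y+D)\cdot E_i=D\cdot E_i-2\coeff_{E_i}D$; inserting the coefficients of $D$ gives $(K_Y+D)\cdot E_i=0$ for $i=1,2,3,4$ and $(K_Y+D)\cdot E=-\tfrac13$. Since the intersection form of the resolution is negative definite, $-(K_Y+D)$ is $f$-nef; and because $E\cdot E_i\ge 0$ for $i\neq 0$, for each fixed $n\ge 0$ the divisor $-p^e(K_Y+D)+nE$ is still $f$-nef once $e\gg 0$ (the only binding inequality, coming from $E$ itself, is $p^e/3\ge 2n$). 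Rewriting
\[
\lceil(1-p^e)K_Y-p^eD\rceil-f^*H+nE=K_Y+\bigl\lceil -p^e(K_Y+D)-f^*H+nE\bigr\rceil
\]
and using that $-f^*H$ is $f$-trivial, the bracketed divisor is $f$-nef, so \autoref{thm:vanishing} supplies the vanishing. Both hypotheses of \autoref{prop:technical} being met, \autoref{application:singularity} concludes that $X$ is strongly $F$-regular.

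The crux is finding the divisor $D$: its fractional coefficients must be calibrated so that $K_Y+D$ is $f$-anti-nef (to feed \autoref{thm:vanishing} uniformly in $n$) while the restriction $(D-E)|_E$ yields a globally $F$-regular triple of points on $E\cong\bP^1$ --- two constraints which nearly pin down the choice displayed above. Verifying the $f$-nefness of $-p^e(K_Y+D)+nE$ for all $n\ge 0$ (with $e\gg 0$ depending on $n$) is the one genuinely delicate numerical point; everything after that is the diagram chase already carried out in \autoref{prop:technical}.
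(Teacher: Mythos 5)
Your proof is correct and takes essentially the same route as the paper: the same reduction via \autoref{application:singularity}, the identical choice $D=\tfrac12 E_1+\tfrac12 E_2+\tfrac23 E_3+\tfrac13 E_4+E_0$, the same identification of $(E,(D-E)|_E)$ with $(\bP^1,\tfrac12 P_1+\tfrac12 P_2+\tfrac23 P_3)$, and the same intersection computation $(K_Y+D)\cdot E_0=-\tfrac13$ feeding \autoref{thm:vanishing}. The only cosmetic quibble is your intermediate display $(K_Y+D)\cdot E_i=D\cdot E_i-2\coeff_{E_i}D$, which reads as double-counting the self-intersection term unless one reads $D\cdot E_i$ there as the adjacency sum; the numbers you obtain and the binding inequality $p^e\ge 6n$ are nonetheless correct.
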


The general case (the singularities other than canonical $D_5$) is left for the following few exercises. So, let $X= \Spec A$ be a general klt surface singularity, where $A$ is a local ring.  As mentioned above the dual graph of the exceptional curve is star shaped. The star can have either two or three arms. In the three arm case we group the graphs according to the absolute value of the determinants of the intersection matrices of the arms. Then the possible cases are $(2,2,d)$, $(2,3,3)$, $(2,3,4)$ and $(2,3,5)$. We set $E=E_0$ to be the vertex of the star in general, and we define $D$ by the properties
\begin{equation*}
\coeff_E D =1 \ ; \  (K_Y + D) \cdot E_i = 0 \  (i \neq 0).
\end{equation*}
Let $E_1$, $E_2$ (and possibly $E_3$) be the curves in the arms that are adjacent to $E=E_0$.

\begin{exercise}
Show that
\begin{equation*}
D= E_0 + \sum_i \frac{d_i-1}{d_i} E_i + \dots
\end{equation*}
where $(d_1,d_2,d_3)$ is the type of the graph.
Conclude then that $(K+D) \cdot E = \sum_i \frac{d_i-1}{d_i} -1 < 0$.
\vskip 3pt
\Hint{Consider the intersection matrix of the $i$-th arm:
\begin{equation*}
\begin{pmatrix}
e_1 & 1 & 0 & \dots & & \\
1 & e_2 & 1 & 0 & \dots & \\
0 &1 & e_3 & 1 & 0 & \dots  \\
\vdots & \vdots & \vdots & \\
\end{pmatrix}
\end{equation*}
Using the adjunction formula, write up a linear equation for computing the coefficients of $D$ in the $i$-th arm involving the above matrix. Then use Cramer's rule and that the above matrix has determinant $d_i$.}
\end{exercise}

\begin{exercise}
Show that the followings are globally $F$-regular.
\begin{enumerate}
\item $\left( \bP^1 , \frac{1}{2} P_1 + \frac{1}{2} P_2  \right)$
 \item $\left( \bP^1 , \frac{1}{2} P_1 + \frac{2}{3} P_2 + \frac{2}{3} P_3 \right)$for $p \neq 2,3$
 \item $\left( \bP^1 , \frac{1}{2} P_1 + \frac{2}{3} P_2 + \frac{3}{4} P_3 \right)$for $p \neq 2,3$
 \item $\left( \bP^1 , \frac{1}{2} P_1 + \frac{2}{3} P_2 + \frac{4}{5} P_3 \right)$for $p \neq 2,3,5$
\end{enumerate}

\Hint{Use \autoref{exc:explicit_gfr_general}.}
\end{exercise}

So, we have proved the theorem:

\begin{theorem}
  \cite{Hara_Classification_of_two_dimensional_F_regular_and_F_pure_singularities}
Every klt surface singularity  is strongly $F$-regular if $p > 5$.
\end{theorem}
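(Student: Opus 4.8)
The plan is to run the strategy of \autoref{application:singularity} uniformly over the star-shaped dual graphs of klt surface singularities, following the $D_5$ computation above essentially verbatim. Let $X=\Spec A$ be a klt surface singularity with $p>5$ (we may assume $k$ is algebraically closed) and let $f\colon Y\to X$ be its minimal resolution, so the dual graph of the exceptional locus is star-shaped with all components smooth rational curves. By \autoref{application:singularity} it is enough to produce, for every effective Cartier divisor $H$ on $X$, an effective exceptional $\bQ$-divisor $D$ on $Y$ and a component $E$ of $D$ with $\coeff_E D=1$ such that $(E,(D-E)|_E)$ is globally $F$-regular and $H^1\bigl(Y,\lceil(1-p^e)K_Y-p^eD\rceil-f^*H+nE\bigr)=0$ for all $n\ge 0$ and all $e\gg 0$.

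First I would take $E=E_0$ to be the central vertex of the star and define $D=E_0+\sum_{i\ne 0}b_iE_i$ by the requirements $\coeff_{E_0}D=1$ and $(K_Y+D)\cdot E_i=0$ for every $i\ne 0$. Solving this tridiagonal system one arm at a time (Cramer's rule, using that the intersection matrix of the $j$th arm has determinant of absolute value $d_j$) gives coefficient $\tfrac{d_j-1}{d_j}$ on the arm curve adjacent to $E_0$ and strictly smaller coefficients further out, so $0\le b_i<1$ for all $i\ne 0$ and $D$ is a genuine boundary whose only coefficient-one component is $E_0$. A short adjunction computation on $E_0$ then gives $(K_Y+D)\cdot E_0<0$: for two arms this is automatic, and for three arms — where the classification of klt surface graphs forces $(d_1,d_2,d_3)\in\{(2,2,d),(2,3,3),(2,3,4),(2,3,5)\}$ — it is precisely the klt inequality $\sum_j\tfrac1{d_j}>1$. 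Restricting to $E_0\cong\bP^1$ yields $(E_0,(D-E_0)|_{E_0})\cong\bigl(\bP^1,\sum_j\tfrac{d_j-1}{d_j}P_j\bigr)$, where the $P_j$ are the distinct points $E_0\cap E_j$; crucially its boundary coefficients depend only on the arm determinants $d_j$.

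It then remains to check the two hypotheses. For global $F$-regularity of $(E_0,(D-E_0)|_{E_0})$: in the two-arm case this is a toric log Fano pair $(\bP^1,a_1P_1+a_2P_2)$ with $a_i<1$, hence globally $F$-regular in every characteristic; in the three-arm case it is one of the pairs $(\bP^1,\tfrac12P_1+\tfrac12P_2+\tfrac{d-1}{d}P_3)$, $(\bP^1,\tfrac12P_1+\tfrac23P_2+\tfrac23P_3)$, $(\bP^1,\tfrac12P_1+\tfrac23P_2+\tfrac34P_3)$, $(\bP^1,\tfrac12P_1+\tfrac23P_2+\tfrac45P_3)$, which by \autoref{exc:explicit_gfr} and \autoref{exc:explicit_gfr_general} (and the explicit exercises derived from the latter) are globally $F$-regular for $p\ne 2$, $p\ne 2,3$, $p\ne 2,3$, and $p\ne 2,3,5$ respectively — so all are covered once $p>5$. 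For the cohomology vanishing, rewrite $\lceil(1-p^e)K_Y-p^eD\rceil-f^*H+nE=K_Y+\lceil-p^e(K_Y+D)-f^*H+nE\rceil$; since $-(K_Y+D)$ is $f$-nef (zero on every $E_i$ with $i\ne 0$, positive on $E_0$) and $f^*H$ is $f$-trivial, for each fixed $n$ and all $e\gg 0$ the twist $-p^e(K_Y+D)+nE$ is $f$-nef and (after the harmless pullback-from-$X$ piece) exceptional; then \autoref{thm:vanishing} applied relatively (killing $R^1f_*$), followed by twisting with $\O_X(-H)$ and using that $X$ is affine, gives the required $H^1$-vanishing, exactly as in the $D_5$ case. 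Feeding both facts into \autoref{prop:technical} and \autoref{application:singularity} shows $X$ is strongly $F$-regular.

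The main obstacle is the characteristic hypothesis itself: every step above works in all characteristics except the global $F$-regularity of the boundary curve $(E_0,(D-E_0)|_{E_0})$, which degenerates at the small primes dividing the arm determinants. The worst case is the icosahedral vertex $(2,3,5)$, where the monomial criterion of \autoref{exc:explicit_gfr_general} fails exactly at $p=5$ (the next-worst, $(2,3,4)$ and $(2,3,3)$, fail at $p=3$, and $(2,2,d)$ at $p=2$), which is what forces $p>5$ — and, by Hara's classification, $p>5$ is also essentially sharp for a characteristic-uniform statement. A secondary technical nuisance is the bookkeeping that $\lceil-p^eD\rceil+nE\le 0$ for $e\gg 0$ (using $\coeff_{E_0}D=1$) and that the boundary terms on the remaining arm curves, together with the mild extra care needed when $X$ is not Gorenstein, do not obstruct the diagram chase; this goes through as in \autoref{application:singularity}.
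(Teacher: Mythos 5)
Your proposal is correct and follows the paper's proof essentially verbatim: same choice of $E=E_0$ at the star's center, same defining conditions $\coeff_{E_0}D=1$ and $(K_Y+D)\cdot E_i=0$ for $i\ne0$, same identification of the restricted pair $(\bP^1,\sum_j\tfrac{d_j-1}{d_j}P_j)$ with arm-types $(2,2,d),(2,3,3),(2,3,4),(2,3,5)$, same appeal to the explicit global $F$-regularity exercises and to \autoref{thm:vanishing} for the $H^1$-vanishing, and same wrap-up via \autoref{prop:technical} and \autoref{application:singularity}. The only small divergence is cosmetic: you state the vanishing input as relative ($R^1f_*=0$ plus affineness of $X$) rather than absolute as in the text, but these are the same thing here.
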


\subsection{Questions}

\begin{question} {\scshape Depth of sharply $F$-pure singularities.}
Let $(X, \Delta)$ be a log canonical pair (allowing $\Delta$ to be an $\bR$-divisor) and let $x \in X$ be a point which is not a log canonical center of $(X, \Delta)$. Suppose that $0 \leq \Delta' \leq \Delta$ is another $\bQ$-divisor and $D$ a $\bZ$-divisor, such that $D \sim_{\bQ, \mathrm{loc}} \Delta'$. Show then that
\[
\depth_x \sO_X(-D) \geq \min\{3, \codim_X x\}.
\]
or give a counterexample. Note that in characteristic zero this was shown in \cite{Kollar_A_local_version_of_the_Kawamata_Viehweg} and in positive characteristic $p>0$ in \cite{Patakfalvi_Schwede_Depth_of_F_singularities} with the extra assumption that the indices are not divisible by $p$ and the singularities are sharply $F$-pure. So, the interesting case is when either $p$ divides some of the appearing indices or the singularities are log canonical but not sharply $F$-pure. In two or three dimensions this could be in particular approachable by an argument similar to that of \cite{Kollar_A_local_version_of_the_Kawamata_Viehweg}, since then some of the required vanishings can still hold (because the dimension is small). Further, the above question is also interesting for slc singularities (see \autoref{subsubsec:slc}) as stated in \cite[Thm 3]{Kollar_A_local_version_of_the_Kawamata_Viehweg}.

Other related question is that in the above situation if $Z$ is the reduced subscheme supported on the union of some log canonical centers, then show or disprove
\begin{equation*}
\depth_x \sI_Z \geq \min\{3, \codim_X x + 1 \}.
\end{equation*}
The known and unknown cases for this question are completely analogous to the previous one (see \cite[Thm 3]{Kollar_A_local_version_of_the_Kawamata_Viehweg} and \cite[Thm 3.6]{Patakfalvi_Schwede_Depth_of_F_singularities}).

A third  related question is to show or disprove that if $(X, \Delta)$ has dlt singularities and $0 \leq D$ a $\bZ$-divisor such that $\Delta \sim_{\bQ, \mathrm{loc}} D$, then  $\sO_X(-D)$ is Cohen-Macaulay. The situation here is also analogous (see \cite[Thm 2]{Kollar_A_local_version_of_the_Kawamata_Viehweg} and \cite[Thm 3.1]{Patakfalvi_Schwede_Depth_of_F_singularities}) .

\end{question}

\begin{question}
\label{qtn:Frobenius_stable_Grauert_Riemenschneider}
{\scshape Frobenius stable Grauert-Riemenschneider vanishing.} If $f : Y \to X$ is a resolution of a normal variety, then is the natural Frobenius map $F^e_* R^i f_* \omega_Y  \to R^i f_* \omega_Y$ zero for any $i>0$ and $e \gg 0$?
\end{question}

\begin{exercise}
Check \autoref{qtn:Frobenius_stable_Grauert_Riemenschneider} for $X$ being a cone over a smooth projective variety.
\end{exercise}

\subsubsection{Three dimensional singularities}

The main question, which is probably hard in general, but should be kept in mind as a guiding principle, is the following:

\begin{question}
\label{qtn:klt_equalst_SFR_in_high_diemnsions}
In a fixed dimension $n$ is there a prime $p_0$, such that for all $p \geq p_0$, klt is equivalent to strongly $F$-regular for pairs with zero boundaries? What if we allow boundaries with coefficients in a finite set?
\end{question}

Most likely, there is either an easy counterexample to \autoref{qtn:klt_equalst_SFR_in_high_diemnsions} or it is extremely hard. So, first we should set our sights to dimension 3:

\begin{question}
Answer \autoref{qtn:klt_equalst_SFR_in_high_diemnsions} for $n=3$. One can also restrict further to terminal singularities. That is, is there a prime $p_0$, such that for all $p \geq p_0$, every terminal singularity $X$ is strongly $F$-regular? The latter question seems to be more approachable, since in characteristic zero there is a good classification of three dimensional terminal singularities \cite{Mori_On_3_dimensional_terminal_singularities}. So, first the question would be how much of that holds in positive characteristic. However, even without solving this question, it would be interesting to take examples of log-resolutions in characteristic zero that show up in any characteristic and find the primes for which they are strongly $F$-regular. One can use that in characteristic zero we know nice resolutions of terminal singularities (c.f., \cite{Chen_Explicit_resolution_of_three_dimensional_terminal_singularities}).
\end{question}

\subsubsection{Classify slc surface singularities}
\label{subsubsec:slc}

Slc (semi-log canonical) singularities are the natural non-normal versions of log canonical singularities. Their main use, as well as the motivation for their introduction \cite{Kollar_Shepher_Barron_Threefolds_and_deformations}, is in higher dimensional moduli theory. As in dimension one needs to add nodal curves to smooth curves to obtain a projective moduli space of curves, in higher dimensions one has to add slc singularities to log canonical singularities to obtain a (partially proven and partially conjectural) moduli space of canonical models. Note that such moduli space parameterizes birational equivalence classes of varieties of general type (which is shown in characteristic zero, for surfaces in positive characteristic and for threefolds in characteristic $p \geq 7$).

First, we define slc singularities. Let $k$ be a field.

\begin{definition}
A scheme $X$ locally of finite type over $k$ is \emph{demi-normal} if it is equidimensional, $S_2$ and its codimension one points are nodes \cite[5.1]{Kollar_Singularities_of_the_minimal_model_program}. Here $x \in X$ is a \emph{node}, if $\sO_{X,x}$ is isomorphic to $R/(f)$ for some two dimensional regular local ring $(R,m)$, $f \in m^2$ and $f$ is not a square in $m^2/m^3$ \cite[1.41]{Kollar_Singularities_of_the_minimal_model_program}.
\end{definition}

\begin{proposition} \cite[1.41.1]{Kollar_Singularities_of_the_minimal_model_program}
Let $(A,n)$ be a 1-dimensional local ring with residue field $k$ and normalization $\overline{A}$. Let $\overline{n}$ be the intersection of the maximal ideals of $\overline{A}$. Assume that $(A, n)$ is a quotient of a regular local ring. Then $(A,n)$ is nodal if and only if $\dim_k (\overline{A}/\overline{n})=2$ and $\overline{n} \subseteq A$.
\end{proposition}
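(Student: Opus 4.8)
The plan is to rewrite the hypothesis $\overline n\subseteq A$ as an equality of ideals and then read the singularity type off the associated graded ring. Throughout, $\overline A$ denotes the normalization of $A$, which we take to be module-finite over $A$ (this is automatic after the completion used below for the harder direction, and holds quite generally in the settings of interest); being a normal Noetherian ring of dimension one it is a finite product $\overline A=\prod_j\overline A_j$ of semilocal Dedekind domains, so every ideal of $\overline A$ is principal, $\overline n=\bigcap(\text{maximal ideals of }\overline A)=(\pi)$ with $\pi$ a nonzerodivisor, and $\overline A/\overline n$ is a product of fields (hence reduced) by the Chinese Remainder Theorem. I may assume $A$ is reduced, since a node is reduced and the condition $\overline n\subseteq A$ presupposes $A\subseteq\overline A$; then $A$ has no embedded primes, so it is Cohen--Macaulay of dimension one. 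The first step is the observation that for such an $A$ the condition $\overline n\subseteq A$ is \emph{equivalent} to $n=\overline n$: one always has $n\subseteq\overline n$ (each maximal ideal of $\overline A$ contracts to $n$), and if $\overline n\subseteq A$ then $\overline n$ is a proper ideal of the local ring $A$ and hence lies in $n$. When this holds, $n^i=\overline n^i$ as subsets of $\overline A$ for every $i\ge 1$, because the ring operations agree.

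For the direction $\Leftarrow$, assume $n=\overline n$ and $\dim_k\overline A/\overline n=2$. Since $\overline n=(\pi)$ is principal with $\pi$ a nonzerodivisor, $\mathrm{gr}_{\overline n}\overline A\cong(\overline A/\overline n)[T]$, which is reduced and has all graded pieces of $k$-dimension $2$. Using $n^i=\overline n^i$ for $i\ge 1$, the positive-degree part of $\mathrm{gr}_n A$ is the ideal $T\cdot(\overline A/\overline n)[T]$; since the nilradical of a graded ring is homogeneous and $(\mathrm{gr}_n A)_0=k$ is reduced, $\mathrm{gr}_n A$ is reduced, and $\dim_k n^i/n^{i+1}=2$ for all $i\ge 1$. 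From $\dim_k n/n^2=2$ I conclude $A$ has embedding dimension $2$, so $A\cong S/I$ for a regular local ring $S$ with $\dim S=2$; reducedness and Cohen--Macaulayness of $A$ together with $S$ being a UFD force $I=(f)$ with $f$ squarefree, nonzero, and a non-unit. Then $\mathrm{gr}_n A=k[X,Y]/(\bar f)$ has Hilbert function $i\mapsto\min(i+1,\ord(f))$, which equals $2$ at $i=1,2$ only if $\ord(f)=2$; so $f\in\mathfrak m_S^2$, and the reducedness of $\mathrm{gr}_n A$ forces $\bar f$ to be squarefree, hence not a square of a linear form. Thus $A=S/(f)$ is a node.

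For the direction $\Rightarrow$, let $A$ be a node. All three quantities in play — being a node, $\dim_k\overline A/\overline n$, and the conductor $(A:\overline A)$ — are unchanged on passing to the completion (completion commutes with normalization in the excellent case, preserves $\ord(f)$ and $\bar f$, and preserves lengths and conductors, the last by faithfully flat descent), so I may assume $A=\widehat R/(f)$ with $\widehat R$ complete regular local of dimension $2$ and $\bar f$ a squarefree binary quadratic form. If $\bar f$ factors over $k$ into two distinct linear forms, Hensel's lemma gives $f=f_1f_2$ in $\widehat R$ with $f_1,f_2$ of order $1$ and coprime, so $\overline A=\widehat R/(f_1)\times\widehat R/(f_2)$ is a product of two complete DVRs with residue field $k$; since $(f_1,f_2)$ is the maximal ideal of $\widehat R$, the exact sequence $0\to A\to\overline A\to\widehat R/(f_1,f_2)\to 0$ lets me compute that the conductor is exactly $n$ and $\overline A/\overline n\cong k\times k$. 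If $\bar f$ is irreducible over $k$, it splits over a separable quadratic extension $k'/k$ (as $k$ is perfect); the corresponding finite étale extension of $\widehat R$ reduces matters to the split case, and descent shows $\overline A$ is a complete DVR with residue field $k'$ whose conductor is $n$, so $\dim_k\overline A/\overline n=[k':k]=2$. In both cases $n=\overline n$ (hence $\overline n\subseteq A$) and $\dim_k\overline A/\overline n=2$.

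I expect the bookkeeping in the $\Rightarrow$ direction to be the main obstacle: verifying that node-ness and the two conditions on $\overline A$ pass correctly to and from the completion, and treating the non-split node (the descent along the quadratic extension $k'/k$, including the jump in residue field) when $k$ is not algebraically closed. Once $\overline n\subseteq A$ has been replaced by $n=\overline n$, the $\Leftarrow$ direction is essentially a formal computation with $\mathrm{gr}_n A$.
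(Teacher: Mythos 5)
The paper states this proposition as a citation to Koll\'ar's book and gives no proof, so there is no in-text argument to compare against. On its own merits, your plan is the natural one: trade $\overline{n}\subseteq A$ for $n=\overline{n}$, read the order and initial form of $f$ off the isomorphism $\mathrm{gr}_n A\cong k+T\,(\overline{A}/\overline{n})[T]$ for the ``if'' direction, and complete, factor by Hensel's lemma, and compute the conductor for ``only if.'' The ``if'' direction is clean and correct.

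Two things need attention. First, you have tacitly replaced the paper's phrase ``$f$ is not a square in $m^2/m^3$'' by ``$\bar f$ is squarefree,'' and these are not equivalent: $\bar f=2X^2$ over $k=\bQ$ is not a square (since $\sqrt2\notin\bQ$) yet has $X$ as a repeated factor. With the paper's literal wording, $A=\bQ[[x,y]]/(2x^2+y^3)$ would be a node, but its normalization is $\bQ[[s]]$ with $A=\bQ[[s^2,s^3]]$, so $\dim_{\bQ}(\overline{A}/\overline{n})=1$ and $s\notin A$: the ``only if'' implication fails. The squarefree reading you use is what makes the statement true (and is surely the intended one), but the reinterpretation should be stated, not slipped in. Second --- a real gap --- the ``only if'' direction leans on perfectness of $k$ to produce a \emph{separable} quadratic extension, but perfectness is not among the hypotheses, and the application immediately following the proposition (the pinch point in characteristic $2$) is exactly a case with imperfect residue field: localizing $\Spec k[x,y,t]/(x^2-ty^2)$ at the generic point $(x,y)$ of the double line gives residue field $k(t)$ and initial form $X^2+tY^2$, squarefree and irreducible over $k(t)$ but splitting only over the inseparable extension $k(t)(\sqrt t)$, so the \'etale-descent step does not apply. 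The conclusion still holds there --- the normalization is $k(t)(\sqrt t)[[y]]$ with $x=\sqrt t\,y$, and one checks $A=k(t)+y\overline{A}$, so $\overline{n}=y\overline{A}\subseteq A$ and $\dim_{k(t)}\overline{A}/\overline{n}=2$ --- but this requires a direct computation, not descent; your ``only if'' case analysis needs a third branch. As a minor point, reducedness of $\mathrm{gr}_n A$ follows immediately from its being a subring of the reduced ring $(\overline{A}/\overline{n})[T]$; the remark about the nilradical being homogeneous is redundant.
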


\begin{exercise}
Show that the pinch point $\Spec \frac{ k[x,y,t]}{(x^2 -t y^2)}$ is demi-normal, even in characteristic 2.
\end{exercise}

\begin{definition} \cite[5.2]{Kollar_Singularities_of_the_minimal_model_program}
If $X$ is a reduced scheme, and $\pi : \oX \to X$ its normalization, then the conductor ideal is the largest ideal sheaf $\sI \subseteq \sO_X$ that is also an ideal sheaf of $\pi_* \sO_{\oX}$. The conductor subschemes are defined as $D:=\Spec ( \sO_X/\sI)$ and $\oD:=\Spec (\sO_{\oX}/\sI)$.
\end{definition}

\begin{fact} \cite[5.2]{Kollar_Singularities_of_the_minimal_model_program}
For a demi-normal scheme $X$, the conductor subschemes are the reduced divisors supported on  the closure of the nodal locus, and on the preimage of that, respectively.
\end{fact}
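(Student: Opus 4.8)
The plan is to prove this by reducing everything to the behaviour at codimension-one points of $X$ --- where demi-normality forces a node and the conductor can be computed by hand --- and then invoking the $S_2$ hypothesis to rule out anything unexpected in higher codimension. Write $\sQ = \pi_*\sO_{\oX}/\sO_X$, so the conductor ideal is $\sI = \Ann_{\sO_X}(\sQ)$, and note that since $\pi$ is birational, $\sI$ also equals $\sHom_{\sO_X}(\pi_*\sO_{\oX}, \sO_X)$ (any $\sO_X$-linear map $\pi_*\sO_{\oX}\to\sO_X$ is multiplication by an element of the total ring of fractions). The support of $\sQ$ is precisely the non-normal locus of $X$, since $\pi$ is an isomorphism over the normal locus; so it suffices to establish: (i) $\Supp\sQ$ is pure of codimension one; (ii) its codimension-one points are exactly the nodes of $X$; and (iii) the schemes $D = \Spec(\sO_X/\sI)$ and $\oD = \Spec(\sO_{\oX}/\sI)$ are reduced. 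Granting (i)--(iii), $D$ is the reduced divisor supported on the closure of the nodal locus and $\oD = \big(\pi^{-1}(D)\big)_{\mathrm{red}}$ is the reduced divisor on the preimage, which is the assertion.

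For (i): both $\sO_X$ and $\pi_*\sO_{\oX}$ are $S_2$ --- the first by the definition of demi-normal, the second because $\oX$ is normal (hence $S_2$) and $\pi$ is finite, so that $\pi_*\sO_{\oX}$ inherits $S_2$ as an $\sO_X$-module. If some component of $\Supp\sQ$ had codimension $\geq 2$, let $x$ be its generic point; then $\sQ_x$ has finite length, so $H^0_{\mathfrak m_x}(\sQ_x) = \sQ_x\neq 0$. Since $(\pi_*\sO_{\oX})_x$ has positive depth, the local cohomology sequence of $0\to\sO_{X,x}\to(\pi_*\sO_{\oX})_x\to\sQ_x\to 0$ yields an injection $\sQ_x\hookrightarrow H^1_{\mathfrak m_x}(\sO_{X,x})$, and the latter vanishes because $\sO_X$ is $S_2$ and $\dim\sO_{X,x}\geq 2$. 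This contradiction forces every component of $\Supp\sQ$ to have codimension $\leq 1$, and as $\Supp\sQ$ is nowhere dense it is pure of codimension one.

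Statement (ii) then follows: a codimension-one point $x\in\Supp\sQ$ has $\sO_{X,x}$ one-dimensional and not normal, hence --- by demi-normality --- a node. To pin down the conductor there, apply the characterisation of nodes recalled above \cite[1.41.1]{Kollar_Singularities_of_the_minimal_model_program}: with $A = \sO_{X,x}$, $\oX$ replaced locally by $\bar A = $ normalization of $A$, and $\bar{\mathfrak n}$ the intersection of the maximal ideals of $\bar A$, one has $\bar{\mathfrak n}\subseteq A$ and $\dim_k(\bar A/\bar{\mathfrak n}) = 2$, while $A/\bar{\mathfrak n}\cong k$. The $\bar A$-ideals strictly between $\bar{\mathfrak n}$ and $\bar A$ are not contained in $A$ (they are not the diagonal line $A/\bar{\mathfrak n}$ inside $\bar A/\bar{\mathfrak n}$), so the conductor at $x$ is exactly $\bar{\mathfrak n}$. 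Consequently $\sO_{\oX}/\sI$ at a point of $\oX$ over $x$ is the residue field of the corresponding DVR, and $\sO_X/\sI$ at $x$ is the field $A/\bar{\mathfrak n}$; in particular both conductor subschemes are $R_0$.

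It remains to upgrade $R_0$ to reducedness (iii). Because $\sI = \sHom_{\sO_X}(\pi_*\sO_{\oX},\sO_X)$ with $\sO_X$ an $S_2$ sheaf, $\sI$ is an $S_2$ $\sO_X$-module, hence (again by finiteness of $\pi$) an $S_2$ $\sO_{\oX}$-module; feeding $\sI$ and $\sO_{\oX}$, both $S_2$, into the depth lemma applied to $0\to\sI\to\sO_{\oX}\to\sO_{\oX}/\sI\to 0$ shows $\sO_{\oX}/\sI$ is $S_1$, so together with $R_0$ it is reduced, i.e. $\oD$ is reduced. Finally $D$ is reduced because $\sI$ is an ideal of both $\sO_X$ and $\pi_*\sO_{\oX}$, so the natural map $\sO_X/\sI\to\pi_*\sO_{\oX}/\sI = \pi_*(\sO_{\oX}/\sI)$ has kernel $\sO_X\cap\sI = \sI$, hence is an injection of rings realizing $\sO_D$ as a subring of a reduced ring. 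I expect the genuine work to lie in the depth-theoretic steps --- the purity argument in (i) and the $S_1$ argument in (iii) --- which is exactly where the $S_2$ clause in the definition of demi-normal is used in an essential way, beyond the codimension-one nodal condition; the main nuisance should be the bookkeeping with depth under the finite morphism $\pi$ when transporting the $S_2$ property between $\sO_X$- and $\sO_{\oX}$-module structures.
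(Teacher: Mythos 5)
The paper states this as a Fact citing \cite[5.2]{Kollar_Singularities_of_the_minimal_model_program} without supplying an in-text argument, so there is nothing internal to compare against; judged on its own merits your proof is correct, and the decomposition---compute the conductor pointwise in codimension one using the nodal condition, then invoke $S_2$ both for purity of $\Supp\sQ$ and for the $R_0+S_1\Rightarrow$ reduced upgrade---is the natural (and, I believe, Koll\'ar's own) route. Two small places where the writing is looser than the mathematics warrants. In (ii) you assert $A/\bar{\mathfrak n}\cong k$ but leave the justification implicit: since $\bar{\mathfrak n}\subseteq A$, $\dim_k(A/\bar{\mathfrak n})=2$ would give $A+\bar{\mathfrak n}=\bar A$, i.e.\ $A=\bar A$, contradicting non-normality, so $\dim_k(A/\bar{\mathfrak n})=1$. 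Also the parenthetical ``not the diagonal line'' slightly misidentifies the reason the conductor is exactly $\bar{\mathfrak n}$: what one needs is that no nonzero \emph{ideal} of $\bar A/\bar{\mathfrak n}$ lies inside the line $A/\bar{\mathfrak n}$, and this holds because the only ideals of the two-dimensional \'etale $k$-algebra $\bar A/\bar{\mathfrak n}$ are $0$, the whole algebra, and (in the split case) the two coordinate lines, none of which is the image of $A$. In (iii) both the $S_2$-ness of $\sI=\sHom_{\sO_X}(\pi_*\sO_{\oX},\sO_X)$ (it is the kernel of a map $\sO_X^{a}\to\sO_X^{b}$ with $\sO_X$ satisfying $S_2$) and the transport of $S_2$ from the $\sO_X$- to the $\sO_{\oX}$-module structure (depth and dimension of a module are unchanged under restriction of scalars along the finite map $\pi$) are correct, and this is indeed where the $S_2$ half of demi-normality does its real work, as you note at the end.
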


\begin{definition} \cite[5.10]{Kollar_Singularities_of_the_minimal_model_program}
Let $X$ be a   demi-normal scheme with normalization $\pi : \oX \to X$ with conductor divisors $D \subseteq X$ and $\oD \subseteq \oX$, respectively. Let $\Delta$ be a $\bQ$ (Weil-)divisor on $X$ not containing common components with $D$, and let $\overline{\Delta}:= \pi^* \Delta$ (defined as the divisorial part of $\pi^{-1}(\Delta)$). Then the pair $(X, \Delta)$ is \emph{semi-log canonical}, or shortly just \emph{slc}, if $K_X + \Delta$ is $\bQ$-Cartier, and $(\oX, \oD + \overline{\Delta})$ is log canonical.
\end{definition}

Now, we would like to see the $F$-singularities aspects of slc singularities. Though we defined $F$-purity in the normal setting, it works without modification in the $G_1$ and $S_2$ setting (c.f., \cite{Patakfalvi_Schwede_Depth_of_F_singularities}).

\begin{exercise}
Show that if $p=2$, then the 
pinch-point ($\{x_1^2 + x_2^2x_3=0 \} \subseteq \bA^n_{x_1,\dots,x_n}$, $n \geq 3$) is not $F$-pure along the entire double line (i.e., along $x_1=x_2=0$).
\vskip 3pt
\Hint{Use \autoref{prop:Fedder_s_criterion}.}
\end{exercise}

A more general wording of the previous exercise is as follows.

\begin{exercise}
Let  $X$ be an affine variety (where we do not assume that $X$ is irreducible) and let $x$ be a codimension one point of the conductor given by the conductor ideal $\sI \subseteq \sO_X$.
\begin{enumerate}
\item \label{itm:conductor} Show that for every $\phi \in \Hom_{\sO_X}( F^e_* \sO_X , \sO_X)$, $\phi(F^e_* \sI)\subseteq \sI$.
\item \label{itm:extension} Show that  every $\phi \in \Hom_{\sO_X}( F^e_* \sO_X , \sO_X)$ extends to $\phi^N \in \Hom_{\sO_Y}( F^e_* \sO_Y , \sO_Y)$, where $Y$ is the normalization of $X$.
\item \label{itm:conductor_inseparable} Show that the conductor of the normalization is inseparable at $x$ over the conductor of $X$, then $X$ is not $F$-pure at $x$.
\end{enumerate}

\Hint{For point \autoref{itm:conductor} extend $\phi$ first to the total ring of fractions $K$ of $R:= \Gamma(X,\sO_X)$, then using this extension show  that $\phi(s) t \in I:=\Gamma(X,\sI)$ for every $s \in I$ and $t \in R^N:=\Gamma(Y, \sO_Y)$.

For point \autoref{itm:extension} show that if $\phi'$ is the above extension  of $\phi$ to $K$, then $ \phi'(F^e_* R^N) \subseteq R^n$. For that first show that we may assume that $R$ is integral. Then show that $(\phi'(x))^m I  \subseteq I$ for every $x \in F^e_* R^N$ and integer $m>0$. Then deduce that $\phi'(x)$ is integral over $R$.

For point \autoref{itm:conductor_inseparable} assume that $X$ is $F$-pure.  Take then a splitting $\phi$ of $\sO_X \to F_* \sO_X$ guaranteed by the $F$-purity. Let $\phi^N$ be the splitting given by point \autoref{itm:extension}. Let $B$ be the conductor on $X$ and $C$ the conductor on $Y$. Show then using point \autoref{itm:conductor} that this yields a splitting of $\sO_B \to F_* \sO_B$ that extends to a splitting of $\sO_C \to F_* \sO_C$. The latter yields then a contradiction with the inseparability. }
\end{exercise}

The following are the questions posed about slc singularities.

\begin{question}
Classify slc surface singularities in characteristic $p>0$. There are more approaches to the  characteristic zero classification. Historically, one first passes to the canonical covers and then classifies only the Gorenstein slc surface singularities \cite[4.21, 4.22, 4.23, 4.24]{Kollar_Shepher_Barron_Threefolds_and_deformations}.  The second goes by classifying the normalization (c.f., \cite[Section 3.3]{Kollar_Singularities_of_the_minimal_model_program}) and then using Koll\'ar's gluing theory \cite[5.12, 5.13]{Kollar_Singularities_of_the_minimal_model_program} to see when one can glue  the normalization into a demi-normal singularity. Unfortunately, there are issues with both  in positive characteristic. For the first one, the canonical cover can be inseparable in characteristic $p$, and for the second one, aspects of the gluing theory work only in characteristic zero \cite[5.12, 5.13]{Kollar_Singularities_of_the_minimal_model_program}.
\end{question}

\begin{question}
Find the $F$-pure ones out of the list of slc singularities found as an answer to the previous question. For the method see \cite{Hara_Classification_of_two_dimensional_F_regular_and_F_pure_singularities,Mehta_Srinivas_Normal_F_pure_surface_singularities} and \cite{Miller_Schwede_Semi_log_canonical}. Note that the two articles together solve the question if the conductor of the normalization maps separably to the conductor of the singularity (which is always true if $p \neq 2$) and further the index is not divisible by $p$. So, the interesting cases are the ones left out.
\end{question}

%

\section{Global Applications}
\label{sec:global}

In this section we present one application to global geometry of the theory of $F$-singularities and then we list a few problems. The known applications are somewhat different in nature with little connection between some of them (a non-complete list is: \cite{Schwede_A_canonical_linear_system,Hacon_Singularities_of_pluri_theta_divisors_in_Char_p,Mustata_The_non_nef_locus_in_positive_characteristic,Mustata_Schwede_A_Frobenius_variant_of_Seshadri_constants,Cascini_Hacon_Mustata_Schwede_On_the_numerical_dimension_of_pseudo_effective_divisors_in_positive_characteristic,Hacon_Xu_On_the_three_dimensional_minimal_model_program_in_positive_characteristic,CasciniTanakaXuOnBPF,ZhangPluriCanonicalMapOfVarietiesOfMaximalAlbaneseDimension,Patakfalvi_Semi_positivity_in_positive_characteristics,Patakfalvi_On_subadditivity_of_Kodaira_dimension_in_positive_characteristic,Hacon_Patakfalvi_A_generic_vanishing_in_positive_characteristic}). In particular, the chosen application is admittedly somewhat random.

Throughout the section the base field $k$ is algebraically closed, and of characteristic $p>0$. The algebraically closed assumption is not necessary in \autoref{subsec:semi_positivity} but it is important in \autoref{subsec:log_Fanos} among other places.

\subsection{Semi-positivity of pushforwards}
\label{subsec:semi_positivity}

The application we discuss in more details here is the semi-positivity of sheaves of the form $f_* \left(\omega_{X/Y}^m \right)$. We use the following notations. This is very far from the most general situation where the results work (see \cite{Patakfalvi_Semi_positivity_in_positive_characteristics} for a more general situation).

\begin{notation}
\label{notation:semi_positivity}
 $X$ is a Gorenstein, projective variety, $f : X \to Y$ is a surjective, morphism  to a smooth projective curve with normal  (and connected) geometric generic fiber, and hence the fibers are normal (and connected) over an open set of $Y$. Fix also a closed point $y_0 \in Y$ such that  $X_0:= X_{y_0}$ is normal.
\end{notation}

We prove the following theorem. Recall that a vector bundle $\sE$ on a projective scheme $Z$ is nef, if for all finite maps $\tau : C \to Z$ from smooth curves, and all line bundle quotients $\tau^* \sE \twoheadrightarrow \sL$, $\deg \sL \geq 0$.

\begin{theorem} \cite{Patakfalvi_Semi_positivity_in_positive_characteristics}
\label{thm:semi_positivity}
In the situation of \autoref{notation:semi_positivity}, if $\omega_{X/Y}$ is $f$-ample and $X_0$ is sharply $F$-pure, then $f_* (\omega_{X/Y}^m)$ is a nef vector bundle for every $m \gg 0$.

\end{theorem}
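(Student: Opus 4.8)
The strategy is to reduce nefness of $f_* (\omega_{X/Y}^m)$ to a positivity statement about a single line bundle on a curve, and to use the sharp $F$-purity of $X_0$ to produce, after a Frobenius twist, enough global sections. First I would recall the standard reduction: a vector bundle $\sE$ on a smooth curve $Y$ fails to be nef precisely when there is a finite map $\tau : C \to Y$ from a smooth curve and a quotient line bundle $\tau^* \sE \twoheadrightarrow \sL$ of negative degree. After base change (and normalizing the total space, using that the geometric generic fiber is normal so the general fiber stays normal, and noting $\omega$ commutes with such base change up to the relevant twists since $X$ is Gorenstein), one reduces to showing: if $f : X \to Y$ is as in \autoref{notation:semi_positivity} and $N$ is a line bundle on $Y$ with $\deg N < 0$, then $f_* (\omega_{X/Y}^m \otimes f^* N)$ has no global sections for $m \gg 0$ — equivalently $H^0(X, \omega_{X/Y}^m \otimes f^* N) = 0$. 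So the crux is a vanishing/non-existence of sections statement.

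\textbf{Using $F$-purity to extend sections.} The key input is that $X_0$ is sharply $F$-pure. The plan is to run an argument in the spirit of \autoref{ex.ExtendingSections}: a section $s$ of $\omega_{X/Y}^m \otimes f^* N$ restricts to a section of $\omega_{X_0}^m$ on the fiber $X_0$ (using adjunction $\omega_{X/Y}|_{X_0} \cong \omega_{X_0}$, valid since $f$ is Gorenstein and $X_0$ is a Cartier divisor). Now suppose for contradiction $s \neq 0$; then $s|_{X_0} \neq 0$ for $y_0$ chosen generically, and more importantly I want to \emph{propagate} the positivity it would force. The mechanism: because $X_0$ is sharply $F$-pure, the map $F^e_* \omega_{X_0}^{p^e} \to \omega_{X_0}$ (trace of Frobenius, as in the exercise on $\Tr_{F^e}$) is surjective, and hence $S^0(X_0, \omega_{X_0} \otimes (\text{ample})) $ is large; one feeds this into a lifting diagram over $Y$ where the vanishing needed to split a short exact sequence
\[
0 \to \omega_{X/Y}^m \otimes f^* N \otimes \sO_X(-X_0) \to \omega_{X/Y}^m \otimes f^* N \to \omega_{X_0}^m \to 0
\]
comes from Serre vanishing after a sufficiently large Frobenius twist, exactly as $\delta$ is shown surjective in \eqref{eq.LiftingDiagram}. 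Iterating the restriction-and-lift over the points of $Y$, the existence of a nonzero $s$ forces sections of $\omega_{X/Y}^m \otimes f^* N$ after twisting down by more and more copies of fibers, i.e. by $f^* \sO_Y(-\text{(large)})$, contradicting that $\omega_{X/Y}$ is $f$-ample but $N$ has negative degree (so no such sections can survive).

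\textbf{The main obstacle.} The hard part will be the base-change step and controlling $\omega_{X/Y}$ under it: after pulling back along $\tau : C \to Y$, the total space $X \times_Y C$ need not be normal or even $S_2$, so one must either normalize and track how $\omega_{X/Y}$ and the sharp $F$-purity of the special fiber behave (sharp $F$-purity is a closed-ish condition and one needs it to persist for the fiber over a chosen point of $C$), or work with a semistable-type reduction. Equally delicate is making the ``$S^0$ of the fiber is everything'' input interact correctly with the relative setting: one genuinely needs the Cartier-module structure on $\sigma(X_0, 0) = \omega_{X_0}$ to be compatible with restriction from $X$, which is where sharp $F$-purity (rather than mere $F$-purity of $X$) is essential and where the argument is most technical. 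I expect the section-lifting and Serre-vanishing parts to be routine once the geometry is set up correctly; the setup is the real work.
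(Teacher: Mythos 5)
Your reduction of nefness is incorrect, and this is where the argument breaks. You correctly recall that $\sE := f_*(\omega_{X/Y}^m)$ fails to be nef precisely when, after some finite base change $\tau : C \to Y$, there is a \emph{quotient} line bundle $\tau^*\sE \twoheadrightarrow \sL$ with $\deg \sL < 0$. But you then convert this into the assertion that $H^0(C, \tau^*\sE \otimes N) \neq 0$ for some $N$ of negative degree, which is a statement about \emph{sub}-line bundles: $H^0(C, \tau^*\sE \otimes N) = \Hom(N^{-1}, \tau^*\sE)$, and ruling these out (after all finite base changes) says $\mu_{\max}(\sE) \leq 0$, i.e.\ that $\sE^{\vee}$ is nef, not $\sE$. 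Nefness of $\sE$ is the dual condition $\mu_{\min}(\sE) \geq 0$. Concretely, $\sO \oplus \sO(5)$ on $\bP^1$ is nef yet has plenty of sections after twisting by $\sO(-1)$, so the vanishing you are aiming for is simply not implied by (and does not imply) the theorem. A quotient $\tau^*\sE \twoheadrightarrow \sL$ of negative degree produces a nonzero element of $H^0(C, \tau^*\sE^{\vee} \otimes \sL)$, not of $H^0(C, \tau^*\sE \otimes \sL)$. The subsequent ``iterate restriction-and-lift to twist down by more fibers'' step also does not produce a contradiction from a fixed section $s$; restricting $s$ to $X_0$ and then lifting only gets you back the section you started with.

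The paper's actual route is genuinely different, and it is worth contrasting. The engine is a Viehweg-style fiber-product argument, not a vanishing statement. One first proves (\autoref{prop:generically_globally_generated}) that, under the hypotheses $N - K_{X/Y}$ nef and $f$-ample together with $H^0(X_0,\sN|_{X_0}) = S^0(X_0,\sN|_{X_0})$, the sheaf $f_*\sN \otimes \omega_Y(2y_0)$ is \emph{generically globally generated}; this is exactly where the $S^0$-lifting along the fiber divisor $X_0$ (as in \autoref{ex.ExtendingSections}) enters, with Serre vanishing killing the obstruction. One then passes to the $n$-fold fiber product $X^{(n)} = X \times_Y \cdots \times_Y X$ with $\sN^{(n)} = \bigotimes_i p_i^*\sN$, using $f^{(n)}_*\sN^{(n)} \cong \bigotimes^n f_*\sN$ (flat base change and projection formula, \autoref{exc:pushforward}) and the multiplicativity $S^0\bigl(X_0^{(n)}, \sN^{(n)}|\bigr) \cong S^0(X_0,\sN|)^{\otimes n}$ (K\"unneth and \autoref{exc:product}). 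This yields generic global generation of $\bigl(\bigotimes^n f_*\sN\bigr) \otimes \omega_Y(2y_0)$ for \emph{every} $n$ with a \emph{fixed} twist, which by \autoref{exc:generic_global_generation_nef} forces $f_*\sN$ to be nef (\autoref{prop:semi_positive}). Finally, taking $\sN = \omega_{X/Y}^m$ one needs $N - K_{X/Y} = (m-1)K_{X/Y}$ to be nef and $f$-ample; nefness of $K_{X/Y}$ is proved separately (\autoref{thm:relative_canonical_nef2}) by an inductive variant of the same construction via \autoref{prop:nef}, and the equality $H^0(X_0,\omega_{X_0}^m) = S^0(X_0,\omega_{X_0}^m)$ for $m \gg 0$ comes from sharp $F$-purity via \autoref{exc:S_0_equals_H_0}. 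Your instinct that the $S^0$-restriction mechanism along $X_0$ and the $S^0 = H^0$ consequence of sharp $F$-purity are the key inputs is correct; what is missing is that they must be fed into the fiber-product machine, which is what actually converts local-on-$Y$ generation statements into global nefness.
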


Note that the corresponding theorem in characteristic zero was proven using the consequence of  Hodge theory that $f_* \omega_{X/Y}$ is semi-positive. The corresponding statement in characteristic $p$ is false, so the proof necessarily eludes such considerations.

Recall that a coherent sheaf $\sF$ on a scheme $X$ is \emph{generically globally generated}, if there is homomorphism $\sO_X^{\oplus m} \to \sF$ which is surjective over a dense open set. Further, if $Z$ is a normal variety and $\sL$ a line bundle on $Z$, then  we use the notation $S^0(Z, \sL)$ for $S^0(Z, \sigma(X, 0) \otimes \sL)$.

\begin{proposition}
\label{prop:generically_globally_generated}
In the situation of  \autoref{notation:semi_positivity}, choose a Cartier divisor $N$ and set $\sN:=\sO_X(N)$. Assume that $N - K_{X/Y} $ is $f$-ample and nef, and  $H^0(X_0,\sN|_{X_0})= S^0(X_0, \sN|_{X_0})$.
Then $f_* \sN \otimes \omega_Y(2y_0)$ is generically globally generated.
\end{proposition}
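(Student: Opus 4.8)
The strategy is to produce enough global sections of $f_*\sN$ after twisting by a degree-$2$ line bundle on the curve $Y$, using the theory of $S^0$ developed earlier. The key mechanism is the section-lifting statement from \autoref{ex.ExtendingSections}: sections of a sheaf of the form $\sigma(X,\Delta+D)\otimes\O_X(M)$ restrict surjectively onto $S^0$ of the divisor $D$, provided $M-K_X-\Delta-D$ is ample. Here the role of the divisor $D$ will be played by the fiber $X_0$, and the hypothesis $H^0(X_0,\sN|_{X_0})=S^0(X_0,\sN|_{X_0})$ is exactly what makes restriction to $X_0$ capture \emph{all} of $H^0(X_0, \sN|_{X_0})$.

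\textbf{Step 1: Reduce to a statement on $X$.} Pushing forward along $f$, a surjection $\O_Y^{\oplus r}\to (f_*\sN)\otimes\omega_Y(2y_0)$ over a dense open of $Y$ is the same, after untwisting, as producing sections of $f_*\sN$ over the generic point whose values at the special fiber $X_0$ span $H^0(X_0,\sN|_{X_0})$ — by Nakayama/cohomology-and-base-change along the curve it is enough to hit the fiber at one good point $y_0$, at the cost of the twist by $\omega_Y(2y_0)$ (the $\omega_Y$ coming from Grothendieck duality for $f$, the $(2y_0)$ giving the slack needed to kill an $H^1$ on $Y$). So the target becomes: the restriction map $H^0(X, \sN\otimes f^*\omega_Y(2y_0)) \to H^0(X_0, \sN|_{X_0})$ is surjective.

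\textbf{Step 2: Apply the $S^0$-lifting.} Take $\Delta=0$, $D=X_0$, and $M$ the Cartier divisor with $\O_X(M)=\sN\otimes f^*\omega_Y(2y_0)$ in the setup of \autoref{ex.ExtendingSections}. One needs $M-K_X-X_0$ ample (or at least that the relevant $H^1$ on $X$ vanishes for $e\gg 0$, which is Serre vanishing after the $(p^e-1)$-twist as in that example); since $X_0=f^*(y_0)$ is $f$-vertical and $N-K_{X/Y}$ is $f$-ample and nef, $M-K_X-X_0 = N-K_{X/Y} + f^*(\text{deg-}2\text{-part of }\omega_Y(2y_0)) - (\text{one fiber})$, and after absorbing fiber classes this is $f$-ample plus a pullback from $Y$ — enough to run Serre vanishing relatively and then on $Y$. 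Then \autoref{ex.ExtendingSections} gives that $S^0(X,\sigma(X,X_0)\otimes\O_X(M))$ surjects onto $S^0(X_0,\sigma(X_0,0)\otimes\O_{X_0}(M|_{X_0})) = S^0(X_0,\sN|_{X_0})$, using that $M|_{X_0}$ differs from $N|_{X_0}$ only by the trivial pullback $f^*\omega_Y(2y_0)|_{X_0}$. By hypothesis this last group equals $H^0(X_0,\sN|_{X_0})$. Since $S^0(X,-)\subseteq H^0(X,-)$, a fortiori $H^0(X,\sN\otimes f^*\omega_Y(2y_0))\to H^0(X_0,\sN|_{X_0})$ is surjective, which is what Step 1 reduced us to.

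\textbf{The main obstacle.} The delicate point is Step 1: converting "the restriction-to-$X_0$ map on global sections of a twist of $\sN$ is surjective" into "$f_*\sN\otimes\omega_Y(2y_0)$ is generically globally generated." This requires care with Grothendieck duality for the non-smooth morphism $f$ (legitimate since $X$ is Gorenstein and $f$ has a relative dualizing sheaf $\omega_{X/Y}$), with cohomology and base change along $Y$ to identify the fiber of $f_*\sN$ at $y_0$ with $H^0(X_0,\sN|_{X_0})$ — one wants $X_0$ normal and, ideally, $R^1f_*\sN$ to behave well near $y_0$, which is where $f$-ampleness of $N-K_{X/Y}$ and relative Serre vanishing get used — and finally with a Mumford-regularity-style argument on the curve $Y$ to pass from "globally generated at $y_0$" to "generically globally generated," the twist $\omega_Y(2y_0)$ being exactly the numerical room needed (a line bundle of degree $\ge 2g$ on a genus-$g$ curve is globally generated). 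The $F$-singularity input itself is entirely encapsulated in the hypothesis $H^0(X_0,\sN|_{X_0})=S^0(X_0,\sN|_{X_0})$ together with the already-proven \autoref{ex.ExtendingSections}; the remaining work is the commutative-algebra/base-change bookkeeping along the base curve.
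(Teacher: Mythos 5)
Your proposal follows essentially the same route as the paper's proof: you set $M := N + f^*K_Y + 2X_0$ so that $\sM = \sN\otimes f^*\omega_Y(2y_0)$, invoke the lifting argument of \autoref{ex.ExtendingSections} with $\Delta = 0$ and $D = X_0$, and use the hypothesis $H^0(X_0,\sN|_{X_0}) = S^0(X_0,\sN|_{X_0})$ together with $\sM|_{X_0}\cong\sN|_{X_0}$ (the twist is trivial along a fiber) to conclude that the restriction to the fiber over $y_0$ is onto, whence $f_*\sM = f_*\sN\otimes\omega_Y(2y_0)$ is generated by global sections near $y_0$ and thus generically globally generated.

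Where your write-up wobbles is in the narrative around the twist. You say $\omega_Y(2y_0)$ "comes from Grothendieck duality for $f$" and from "the slack needed to kill an $H^1$ on $Y$," and later that it is "the numerical room needed" for degree-$\geq 2g$ global generation on the base curve. None of that is what is going on. The twist is chosen so that
\[
M - K_X - X_0 \;=\; N + f^*K_Y + 2X_0 - K_X - X_0 \;=\; N - K_{X/Y} + X_0
\]
and this divisor is ample on $X$ (nef, plus $f$-ample, plus the pullback of an ample divisor from $Y$), which is exactly what \autoref{ex.ExtendingSections} needs in order to kill the relevant $H^1$ on $X$ by Serre vanishing after Frobenius twisting. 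Consequently your Step 2 hedge (``absorb fiber classes,'' ``run Serre vanishing relatively and then on $Y$'') is unnecessary: once you see that $N-K_{X/Y}+X_0$ is ample outright, a single application of \autoref{ex.ExtendingSections} finishes the job, with no two-stage vanishing and no Grothendieck duality for $f$. The skeleton of the argument is correct; tighten the explanation of why the twist is what it is and you will reproduce the paper's proof.
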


\begin{proof}
Set $M:=N + f^* K_Y + 2 X_0$ and $\sM:=\sO_X(M)$. Consider the commutative diagram below.
\begin{equation}
\label{eq:generically_globally_generated:commutative_diagram}
\xymatrix{
f_* \sM \ar[r] & (f_* \sM) \otimes k(y_0) \ar@{^(->}[r] &  H^0(X_0, \sM|_{X_0})  \\
S^0( X, \sigma(X,  X_0) \otimes \sM)  \otimes \sO_Y \ar[u] \ar[rr] & & S^0(X_0, \sM|_{X_0}) \ar@{=}[u]
} ,
\end{equation}
where $H^0(X_0, \sM|_{X_0}) = S^0(X_0,  \sM|_{X_0})$, because
\begin{equation*}
 H^0(X_0, \sM|_{X_0})  \cong  H^0(X_0, \sN|_{X_0}) = S^0(X_0,  \sN|_{X_0}) \cong S^0(X_0,  \sM|_{X_0}).
\end{equation*}
Note that
\begin{equation}
\label{eq:generically_globally_generated:divisors}
M - K_X  - X_0
=
N + f^* K_Y + 2 X_0 - K_{X/Y} - f^* K_Y  - X_0
=
N - K_{X/Y}  + X_0 .
\end{equation}
Note also that $N - K_{X/Y} $ is nef and $f$-ample by assumption. Furthermore, $X_0$ is the pullback of an ample divisor from $Y$. Hence, $N - K_{X/Y} + X_0$  is ample and then by \autoref{eq:generically_globally_generated:divisors} so is $M - K_X  - X_0$.
Hence, \autoref{eq.LiftingDiagram}  implies that the bottom horizontal arrow in \autoref{eq:generically_globally_generated:commutative_diagram} is surjective. This finishes our proof.
\end{proof}

\begin{exercise}
\label{exc:generic_global_generation_nef}
If $\sF$ is a vector bundle on a smooth curve $Y$ and $\sL$ is a line bundle such that for every $m>0$, $\left( \bigotimes_{i=1}^m\sF \right) \otimes \sL$ is generically globally generated, then $\sF$ is nef.
\end{exercise}

\begin{notation}
\label{notation:product}
For a morphism $f : X \to Y$ of schemes, define
\begin{equation*}
X^{(m)}_Y :=\underbrace{X \times_Y X \times_Y \dots \times_Y X}_{\textrm{$m$
times}} ,
\end{equation*}
and let $f^{(m)}_Y : X^{(m)}_Y \to Y$ be the natural induced map. If $\sF$ is a sheaf of $\sO_X$-modules, then
\begin{equation*}
\sF^{(m)}_Y := \bigotimes_{i=1}^m p_i^* \sF,
\end{equation*}
where $p_i$ is the $i$-th projection $X^{(m)}_Y \to X$.
In most cases, we omit $Y$ from our notation. I.e., we use $X^{(m)}$, $f^{(m)}$ and $\sF^{(m)}$ instead of $X^{(m)}_Y$,  $f^{(m)}_Y$ and $\sF^{(m)}_Y$, respectively.
\end{notation}

\begin{exercise}
\label{exc:product}
If $\sN$ is a line bundle on a normal Gorenstein variety $X$, then
\begin{equation*}
S^0 \left( X^{(m)},  \sN^{(m)} \right) \cong S^0(X, \sN)^{\otimes m}.
\end{equation*}
(Here $X^{(m)}$ and $\sN^{(m)}$ are taken over $\Spec k$.)
\vskip 3pt
\Hint{the main issue is showing that the trace map is the box product of the trace maps, it is easier to show it on the regular locus and then just extend globally since the complement has large codimension.}
\end{exercise}

\begin{exercise}
\label{exc:pushforward}
Show that in the situation of \autoref{notation:product}, $f_*^{(m)}  \sN^{(m)} \cong \bigotimes_{i=1}^m f_* \sN$ for any line bundle $\sN$ on $X$ and integer $m >0$
\Hint{do induction, for one induction step use the projection formula and flat base-change.}
\end{exercise}

\begin{proposition}
\label{prop:semi_positive}
In the situation of Notation \ref{notation:semi_positivity}, choose a Cartier divisor $N$ and set $\sN:=\sO_X(N)$.
Assume that $N - K_{X/Y} $ is nef and $f$-ample, and $H^0(X_0,\sN|_{X_0})= S^0(X_0,  \sN|_{X_0})$.
Then $f_* \sN $ is a nef vector bundle.
\end{proposition}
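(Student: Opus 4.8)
The plan is to deduce this from \autoref{prop:generically_globally_generated}, applied not to $f$ itself but to all of its relative fibre powers $f^{(m)}\colon X^{(m)}\to Y$ (in the sense of \autoref{notation:product}, taken over $Y$), and then to feed the output into the tensor-power criterion for nefness, \autoref{exc:generic_global_generation_nef}. First I would note that $f_*\sN$ is a vector bundle: it is a subsheaf of its restriction to the generic point of $Y$, hence a torsion-free coherent sheaf on the smooth curve $Y$, hence locally free. So it suffices to prove nefness, and by \autoref{exc:generic_global_generation_nef} with the fixed line bundle $\sL=\omega_Y(2y_0)$ it is enough to show that $\big(\bigotimes_{i=1}^m f_*\sN\big)\otimes\omega_Y(2y_0)$ is generically globally generated for every $m>0$.

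The next step is to check that $(X^{(m)},f^{(m)})$ again satisfies \autoref{notation:semi_positivity}. Here $f$ is flat, being a dominant morphism from an integral scheme to a smooth curve, and its fibres are Gorenstein since $X$ and $Y$ are; hence each projection $X^{(m)}\to X^{(m-1)}$ is flat with Gorenstein fibres, and by induction $X^{(m)}$ is Gorenstein, projective and flat over $Y$. The geometric generic fibre of $f$ is geometrically normal and connected, hence geometrically integral, so its $m$-fold product over $\overline{K(Y)}$ is again geometrically integral and normal; this forces $X^{(m)}$ to be a variety with normal, connected geometric generic fibre. Finally $(X^{(m)})_{y_0}=(X_0)^{(m)}$, the $m$-fold product of $X_0$ over $k=k(y_0)$, which is normal because $X_0$ is normal (over the algebraically closed $k$, normal means geometrically normal).

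Then I would transport the two hypotheses of \autoref{prop:generically_globally_generated} to $f^{(m)}$ with the line bundle $\sN^{(m)}$. From $\omega_{X^{(m)}/Y}\cong\bigotimes_i p_i^*\omega_{X/Y}$ one has $N^{(m)}-K_{X^{(m)}/Y}\sim\sum_i p_i^*(N-K_{X/Y})$, which is nef as a sum of pullbacks of the nef divisor $N-K_{X/Y}$; and it is $f^{(m)}$-ample, since, choosing an ample $\mfra$ on $Y$ with $N-K_{X/Y}+f^*\mfra$ ample on $X$, the divisor $\sum_i p_i^*(N-K_{X/Y}+f^*\mfra)$ is ample on $X^{(m)}$ (it is the restriction of an exterior product of ample divisors along the closed immersion $X^{(m)}\hookrightarrow X\times_k\cdots\times_k X$) and differs from $N^{(m)}-K_{X^{(m)}/Y}$ by the pullback of an ample divisor on $Y$. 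For the $S^0$-condition, on $(X_0)^{(m)}$ the line bundle $\sN^{(m)}$ restricts to the exterior tensor power of $\sN|_{X_0}$, so K\"unneth gives $H^0\big((X_0)^{(m)},\sN^{(m)}|_{(X_0)^{(m)}}\big)=H^0(X_0,\sN|_{X_0})^{\otimes m}$, while \autoref{exc:product} gives $S^0\big((X_0)^{(m)},\sN^{(m)}|_{(X_0)^{(m)}}\big)=S^0(X_0,\sN|_{X_0})^{\otimes m}$; the assumption $H^0(X_0,\sN|_{X_0})=S^0(X_0,\sN|_{X_0})$ then yields the equality on $(X_0)^{(m)}$. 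Thus \autoref{prop:generically_globally_generated} applies and shows $f^{(m)}_*\sN^{(m)}\otimes\omega_Y(2y_0)$ is generically globally generated; by \autoref{exc:pushforward} this sheaf is $\big(\bigotimes_{i=1}^m f_*\sN\big)\otimes\omega_Y(2y_0)$, which completes the argument via the first paragraph.

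The step I expect to be the main obstacle is the second paragraph: verifying that the fibre powers $X^{(m)}$ stay integral and Gorenstein and, crucially, that their special fibre $(X_0)^{(m)}$ stays normal — which genuinely uses geometric normality of $X_0$ — together with the correct identification of $\omega_{X^{(m)}/Y}$ needed for the nef/ample bookkeeping in the third paragraph. Conceptually, the point of the whole manoeuvre is that ``generic global generation after a fixed twist'' is \emph{not} multiplicative in $\sN$, whereas the condition $S^0=H^0$ \emph{is} multiplicative under exterior powers (\autoref{exc:product}); this is exactly what allows the tensor-power criterion to upgrade generic global generation to nefness.
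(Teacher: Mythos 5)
Your proof is correct and follows essentially the same route as the paper's: apply \autoref{prop:generically_globally_generated} to the fibre powers $f^{(m)}$, propagate $S^0=H^0$ via \autoref{exc:product} and K\"unneth, identify $f^{(m)}_*\sN^{(m)}$ with $\bigotimes f_*\sN$ via \autoref{exc:pushforward}, and conclude by \autoref{exc:generic_global_generation_nef}. The only difference is that you spell out in more detail the verification that $X^{(m)}$ remains integral and Gorenstein and that $N^{(m)}-K_{X^{(m)}/Y}$ is nef and $f^{(m)}$-ample, which the paper states more tersely.
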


\begin{proof}
First we claim that $X^{(n)}$ is a variety, that is, it is integral for every integer $n>0$. Indeed, since $X$ is Gorenstein and $Y$ is smooth, $X$ is relatively Gorenstein over $Y$, and hence so is $X^{(n)}$. Therefore, $X^{(n)}$ is also absolutely (so not only relatively over $Y$) Gorenstein. In particular, $X$ is $S_2$ and then we can check reducedness only at generic points. However, $X^{(n)}$ is flat over $Y$, so all generic points lie over the generic point of $Y$. Further over the generic point of $Y$,  $X^{n}$ is reduced and irreducible since the geometric generic fiber of $f$ is assumed to be normal (and connected). This concludes our claim.

Hence the assumptions of \autoref{notation:semi_positivity} are satisfied for $f^{(n)} : X^{(n)} \to Y$, $N^{(n)} - K_{X^{(n)}/Y} = (N - K_{X/Y})^{(n)}$ is nef and $f^{(n)}$-ample, and further by \autoref{exc:product} and the K\"unneth formula,
\begin{equation*}
\qquad H^0 \left( X_0^{(n)},\sN^{(n)}|_{X_0^{(n)}} \right)
=
H^0\left(X_0,\sN|_{X_0} \right)^{\otimes n}  \cong   S^0\left(X_0,  \sN|_{X_0}\right)^{\otimes n}
\cong
S^0 \left( X_0^{(n)}, \sN^{(n)}|_{X_0^{(n)}} \right).
\end{equation*}
Hence Proposition \ref{prop:generically_globally_generated} applies to $X^{(n)}$ and $N^{(n)}$, and consequently, $f^{(n)}_*( \sN^{(n)} ) \otimes \omega_Y(2y_0)$ generically globally generated for every $n>0$.

By \autoref{exc:pushforward}, $f^{(n)}_* \left( \sN^{(n)} \right) \cong \bigotimes_{i=1}^n f_* \sN$. Therefore, $f_* \sN$ is a vector bundle, such that $\left( \bigotimes_{i=1}^n f_* \sN \right) \otimes \omega_Y(2y_0)$ is generically globally generated for every $n>0$. Hence, by  \autoref{exc:generic_global_generation_nef}, $f_* \sN$ is a nef vector bundle. This concludes our proof.

\end{proof}

\begin{proposition}
\label{prop:nef}
In the situation of Proposition \ref{prop:semi_positive}, if furthermore
  $\sN_y$ globally generated for all $y \in Y$,
 then $\sN$ is nef.

\end{proposition}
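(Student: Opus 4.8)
The plan is to deduce nefness of $\sN$ from the fact, already established in \autoref{prop:semi_positive} (whose hypotheses are among those assumed here), that $f_*\sN$ is a \emph{nef} vector bundle on the curve $Y$. Since $k$ is algebraically closed, it suffices to show $\deg(\nu^*\sN)\ge 0$ for every morphism $\nu\colon C\to X$ with $C$ a smooth projective curve. I would first record that $\sN$ is globally generated on the generic fibre $X_\eta$ of $f$: on a dense open $Y^\circ\subseteq Y$ the formation of $f_*\sN$ commutes with base change and each $\sN_y$ is globally generated, so the relative base locus of $\sN$ (the support of $\coker(f^*f_*\sN\to\sN)$) is contained in finitely many fibres, and in particular misses $X_\eta$.

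Given $\nu\colon C\to X$ there are two cases. If $f\circ\nu$ is constant, with image $y\in Y$, then $\nu$ factors through the fibre $X_y$, and $\sN|_{X_y}$ is globally generated, hence nef, so $\deg\nu^*\sN\ge 0$. Otherwise $g:=f\circ\nu\colon C\to Y$ is a finite flat morphism of smooth projective curves. Form $X_C:=X\times_Y C$, with projections $q\colon X_C\to X$ and $f_C\colon X_C\to C$; the maps $\nu$ and $\mathrm{id}_C$ agree over $Y$, hence induce a section $\sigma\colon C\to X_C$ of $f_C$ with $q\circ\sigma=\nu$. By flat base change, $\sE:=(f_C)_*\, q^*\sN\cong g^*(f_*\sN)$, which is a nef vector bundle on $C$ as the pullback of the nef bundle $f_*\sN$. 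Pulling the evaluation map $f_C^*\,(f_C)_*\, q^*\sN\to q^*\sN$ back along $\sigma$ and using $f_C\circ\sigma=\mathrm{id}_C$, $q\circ\sigma=\nu$, we obtain a map $\psi\colon\sE\to\nu^*\sN$ of sheaves on $C$. It is nonzero, since over the generic point of $C$ the evaluation map becomes the evaluation-of-sections map of a base change of $\sN_\eta$, which is surjective because $\sN_\eta$ is globally generated. Hence the image of $\psi$ is a nonzero coherent subsheaf of the line bundle $\nu^*\sN$ on the smooth curve $C$, so it equals $\nu^*\sN(-D')$ for a unique effective divisor $D'\ge 0$. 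Thus $\sE$ has $\nu^*\sN(-D')$ as a line bundle quotient, and nefness of $\sE$ gives $\deg\nu^*\sN(-D')\ge 0$; therefore $\deg\nu^*\sN=\deg\nu^*\sN(-D')+\deg D'\ge 0$. In both cases $\deg\nu^*\sN\ge 0$, so $\sN$ is nef.

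The conceptually cleanest route would be to note that global generation of $\sN$ along every fibre makes $f^*f_*\sN\to\sN$ surjective, producing a $Y$-morphism $X\to\bP_Y(f_*\sN)$ that pulls the tautological $\O(1)$ back to $\sN$; since $f_*\sN$ is nef, $\O(1)$ is nef on the projective bundle, hence so is $\sN$. The main obstacle to this shortcut — and the only genuine subtlety — is that $\sN$ need not be \emph{relatively} globally generated: cohomology and base change for $f_*\sN$ can fail over finitely many special fibres even though $\sN_y$ is globally generated for every $y$, so $f^*f_*\sN\to\sN$ need not be surjective. The curve-by-curve argument above sidesteps this precisely because restricting the evaluation map to the section $\sigma$ still produces a line bundle quotient of the nef bundle $\sE$, up to twisting down by the effective divisor $D'$, which is all one needs for the degree estimate.
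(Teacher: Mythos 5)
Your proof is correct and essentially matches the paper's argument: reduce to checking $\deg(\sN|_C)\geq 0$ on smooth projective curves $C$ mapping finitely to $X$, handle vertical curves via fibrewise global generation, and handle horizontal curves by observing that the evaluation map $f^*f_*\sN\to\sN$ is generically surjective on $C$, giving a nonzero map from a pullback of the nef bundle $f_*\sN$ to the line bundle $\sN|_C$. The detour through the fibre product $X_C$ and the section $\sigma$ is equivalent to the paper's more direct restriction of $f^*f_*\sN\to\sN$ along $\nu:C\to X$ — indeed the very flat base-change isomorphism $\sE\cong g^*(f_*\sN)$ that you invoke identifies your map $\psi$ with that restriction.
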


\begin{proof}
Consider the following  commutative diagram for every $y \in Y$.
\begin{equation}
\label{eq:nef:diagram}
\xymatrix{
f^* f_* \sN \ar[d] \ar[r] &  \sN  \ar[d] \\
H^0(X_y, \sN) \otimes \sO_{X_y} \ar[r] & \sN_y
}
\end{equation}
The left vertical arrow is an isomorphism for all but finitely many $y$ by cohomology and base-change. The bottom horizontal arrow is surjective for all $y \in Y$ by assumption. Hence $f^* f_* \sN \to \sN$ is surjective except possibly at points lying  over finitely many points of $y \in Y$. To show that $\sN$ is nef, we have to show that $\deg ( \sN|_C )\geq 0$ for every smooth projective curve $C$ mapping finitely to $X$. By assumption this follows if $C$ is vertical (globally generated implies nef). So, we may assume that $C$ maps surjectively onto $Y$. However, then $(f^* f_* \sN)|_C \to \sN|_C$ is generically surjective. Since $f_* \sN$ is nef by Proposition \ref{prop:semi_positive}, so is $f^* f_* \sN$ and hence $\deg (\sN|_C) \geq 0$ has to hold.
\end{proof}

The next exercises tell us how to satisfy the condition $H^0=S^0$ from the previous proposition.

\begin{exercise}
Let $(X,\Delta)$ be a pair such that $(p^e -1)(K_X+\Delta)$ is Cartier for some $e>0$. Show then that the restriction of the natural map $F^{ne}_* \sO_X((1-p^{ne})(K_X + \Delta)) \to \sO_X$ to $\sigma(X, \Delta) \otimes \sO_X((1-p^{ne})(K_X + \Delta)) $ yields for every $n \gg 0$ a surjective homomorphism
\begin{equation*}
 F^{ne}_* (\sigma(X, \Delta) \otimes \sO_X((1-p^{ne})(K_X + \Delta)) ) \to \sigma(X, \Delta).
\end{equation*}
\Hint{Since the image of the original homomorphism is $\sigma(X, \Delta)$ for every $ n \gg0$, it is clear that the image of the restricted homomorphism is contained in $\sigma(X, \Delta)$. To prove surjectivity, take an $n'$, such that
\begin{equation*}
 F^{n'e}_* ( \sO_X((1-p^{n'e})(K_X + \Delta)) ) \to \sigma(X, \Delta).
\end{equation*}
is surjective and show that the composition of
\begin{multline*}
 F^{ne}_* (  F^{n'e}_* ( \sO_X((1-p^{n'e})(K_X + \Delta)) )  \otimes \sO_X((1-p^{ne})(K_X + \Delta)) ) \to
\\
\to F^{ne}_* ( \sigma(X, \Delta) \otimes \sO_X((1-p^{ne})(K_X + \Delta)) )
 \to \sigma(X, \Delta)
\end{multline*}
is also surjective.}
\end{exercise}

\begin{exercise}
\label{exc:alternate_def_of_S_0}
 Show that if $L$ is a  line bundle on a projective pair $(X,\Delta)$ such that $(p^e -1)(K_X+\Delta)$ is Cartier for some $e>0$, then for each $n \gg 0$, $S^0(X, \sigma (X, \Delta) \otimes L)$ is equal to the image of the homomorphism
\begin{equation*}
 H^0(X, L \otimes F^{ne}_* (\sigma(X, \Delta) \otimes \sO_X((1-p^{ne})(K_X + \Delta)) ) \to H^0(X, L \otimes \sigma(X, \Delta)).
\end{equation*}
\Hint{Let $V_n$ be the image of the above homomorphism. It is immediate that $V_n \subseteq S^0(X, \sigma(X, \Delta) \otimes L)$. For the other containment, use a trick similar to the previous exercise: take an integer $n'>0$, such that
\begin{equation*}
 F^{n'e}_* ( \sO_X((1-p^{n'e})(K_X + \Delta)) ) \to \sigma(X, \Delta).
\end{equation*}
is surjective, and consider the composition of
\begin{multline*}
 H^0(X, L \otimes F^{ne}_* (  F^{n'e}_* ( \sO_X((1-p^{n'e})(K_X + \Delta)) )  \otimes \sO_X((1-p^{ne})(K_X + \Delta)) )) \to \\
\to H^0(X, L \otimes F^{ne}_* ( \sigma(X, \Delta)  \otimes \sO_X((1-p^{ne})(K_X + \Delta)) )) \to H^0(X, L \otimes \sigma(X, \Delta)).
\end{multline*}
Show that the image of the above homomorphism contains $V_n$, and also that this image equals $S^0(X, \sigma(X, \Delta) \otimes L)$ for $n$ big enough.}
\end{exercise}

\begin{exercise}
\label{exc:S_0_equals_H_0}
Show that if $L$ is an ample line bundle on a projective pair $(X,\Delta)$ such that $(p^e -1)(K_X+\Delta)$ is Cartier for some $e>0$, then there is an integer $n>0$, such that for every nef line bundle $N$,  $S^0(X, \sigma(X, \Delta) \otimes L^n \otimes N)= H^0(X, \sigma(X, \Delta) \otimes L^n \otimes N)$.

\Hint{Let $e>0$ be an integer such that $(p^e-1)(K_X + \Delta)$ is Cartier. According to \autoref{exc:alternate_def_of_S_0}, it is enough to show that
\begin{multline*}
H^0 \left(X, L^n \otimes N \otimes F^{(i+1)e}_* \left(\sigma(X, \Delta) \otimes \sO_X((1-p^{(i+1)e})(K_X + \Delta))\right) \right)
\\ \to H^0 \left(X, L^n \otimes N \otimes F^{ie}_* \left(\sigma(X,\Delta) \otimes \sO_X((1-p^{ie})(K_X + \Delta))\right)\right)
\end{multline*}
is surjective for every $i$. Show then that the above maps are induced from the exact sequence
\begin{equation*}
\xymatrix{
0 \ar[r] & \sB \ar[r] & F^e_* ( \sigma(X,\Delta) \otimes \sO_X ( ( 1-p^e) (K_X + \Delta))) \ar[r] & \sigma(X, \Delta) \ar[r] & 0.
}
\end{equation*}
Use then Fujita vanishing \cite{FujitaVanishingTheoremsForSemiPositive} to show surjectivity.}
\end{exercise}

\begin{theorem}
\label{thm:relative_canonical_nef2}
In the situation of Notation \ref{notation:semi_positivity}, if  $X_0$ is sharply $F$-pure  and $K_{X/Y}$ is $f$-nef, then $K_{X/Y}$ is nef.
\end{theorem}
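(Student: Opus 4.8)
The plan is to reduce to horizontal curves, perturb $\omega_{X/Y}$ by an $f$-ample divisor so as to enter the $f$-ample regime covered by the semi-positivity machinery of \autoref{thm:semi_positivity} and \autoref{prop:nef}, and then remove the perturbation by a limiting argument. First, since $\omega_{X/Y}$ is $f$-nef, $\omega_{X/Y}\cdot C\geq 0$ for every curve $C$ contracted by $f$, so it remains to prove $\omega_{X/Y}\cdot C\geq 0$ for curves $C$ mapping finitely onto $Y$ (one could also try to base change to such a $C$ to produce a section of $f$, but the perturbation route below is cleaner).

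Fix an effective $f$-ample Cartier divisor $A$ on $X$ with $X_0\nsubset\Supp A$. For each rational $\varepsilon>0$ the $\bQ$-divisor $K_{X/Y}+\varepsilon A$ is $f$-ample, being the sum of the $f$-nef divisor $K_{X/Y}$ and the $f$-ample divisor $\varepsilon A$; moreover, since $X_0$ is sharply $F$-pure and sharp $F$-purity is an open condition on the boundary coefficient, the pair $(X_0,\varepsilon A|_{X_0})$ is again sharply $F$-pure once $\varepsilon$ is small enough, so in particular $\sigma(X_0,\varepsilon A|_{X_0})=\cO_{X_0}$. I would then apply the $\bQ$-boundary version of \autoref{thm:semi_positivity}, equivalently the pair analogues of \autoref{prop:semi_positive} and \autoref{prop:nef} (as in \cite{Patakfalvi_Semi_positivity_in_positive_characteristics}), to $(X,\varepsilon A)$: because $K_{X/Y}+\varepsilon A$ is $f$-ample it is $f$-semiample, so for suitably divisible $m\gg 0$ the line bundle $\sO_X\!\left(m(K_{X/Y}+\varepsilon A)\right)$ is $f$-globally generated; because $(X_0,\varepsilon A|_{X_0})$ is sharply $F$-pure one gets $H^0=S^0$ on the fibre after a further ample twist via \autoref{exc:S_0_equals_H_0}; and, passing to the fibre powers $X^{(n)}$ exactly as in the proof of \autoref{prop:semi_positive} (using \autoref{exc:product} and \autoref{exc:pushforward}), one concludes that $\sO_X\!\left(m(K_{X/Y}+\varepsilon A)\right)$ is, off finitely many fibres, a quotient of the pullback of a nef bundle on $Y$, hence nef; on the remaining (vertical) fibres it is ample since $K_{X/Y}+\varepsilon A$ is $f$-ample, so $K_{X/Y}+\varepsilon A$ is nef. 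Letting $\varepsilon\to 0$ and using that the nef cone is closed, $\omega_{X/Y}$ is nef.

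The main obstacle is the step invoking the $\bQ$-boundary semi-positivity statement. Propositions \ref{prop:semi_positive} and \ref{prop:nef} are phrased with the hypothesis ``$N-K_{X/Y}$ nef and $f$-ample'', and one cannot substitute $\sN=\sO_X\!\left(m(K_{X/Y}+\varepsilon A)\right)$ naively, since $N-K_{X/Y}$ then contains a positive multiple of the class $K_{X/Y}$, which is precisely what is not yet known to be nef. What rescues the argument is a relative vanishing theorem --- available because the special fibre $X_0$ is sharply $F$-pure, even though Kodaira-type vanishing can fail in characteristic $p$ --- that replaces the Serre vanishing used in \autoref{prop:generically_globally_generated} in the range where only relative, not absolute, ampleness is available; setting this up, together with the rounding bookkeeping for the $\bQ$-divisor $\varepsilon A$, is the technical heart of the argument, carried out in \cite{Patakfalvi_Semi_positivity_in_positive_characteristics}.
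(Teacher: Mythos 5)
Your strategy --- perturb by a small $f$-ample $\varepsilon A$ and take a limit --- has a gap that you yourself put your finger on, but your proposed rescue does not close it. Taking $\sN=\sO_X\!\left(m(K_{X/Y}+\varepsilon A)\right)$ makes $N-K_{X/Y}=(m-1)K_{X/Y}+m\varepsilon A$ contain a positive multiple of the class whose nefness is the goal, so the ``nef'' half of the hypothesis of Propositions~\ref{prop:semi_positive} and~\ref{prop:nef} is unavailable. You suggest fixing this by a relative vanishing theorem replacing the Serre vanishing inside \autoref{prop:generically_globally_generated}, but the vanishing step is not the only place nefness of $N-K_{X/Y}$ is used, and more importantly it cannot be salvaged this way when one passes to the fibre powers $X^{(n)}$ in \autoref{prop:semi_positive}: if $N-K_{X/Y}$ is only $f$-ample, then $(N-K_{X/Y})^{(n)}$ is only $f^{(n)}$-ample, and to make it absolutely ample one must add a multiple of $(f^{(n)})^*y_0$ that grows with $n$. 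This changes the fixed twist $\omega_Y(2y_0)$ in \autoref{prop:generically_globally_generated} into something of degree growing linearly in $n$, after which \autoref{exc:generic_global_generation_nef} no longer applies (one only gets a weak positivity bound of the form $\sF\otimes\sO_Y(m_0)$ nef, which does not improve as $\varepsilon\to 0$ because $m_0$ depends on the non-nef part of $N-K_{X/Y}$, not on $\varepsilon$).

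The paper escapes this circularity differently: rather than perturbing by $\varepsilon A$ and taking $\varepsilon\to 0$, it fixes one sufficiently ample $Q$ (chosen via \autoref{exc:S_0_equals_H_0} and relative Fujita vanishing so that $H^0=S^0$ on the central fibre and fibrewise global generation hold for any $f$-nef twist of $Q$) and then proves by induction on $q$ that $qK_{X/Y}+Q$ is nef. In the inductive step one takes $N=qK_{X/Y}+Q$; then $N-K_{X/Y}=(q-1)K_{X/Y}+Q$ is $f$-ample because $K_{X/Y}$ is $f$-nef and $Q$ is ample, and it is \emph{nef by the inductive hypothesis}. Thus \autoref{prop:nef} applies verbatim without any new vanishing input, and at the end one divides by $q$ and lets $q\to\infty$. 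The key idea you are missing is that the ``nef'' input needed by the machinery should be supplied by an induction on the coefficient of $K_{X/Y}$, not assumed after a perturbation; this is what renders the problem non-circular.
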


\begin{proof}
Using relative Fujita vanishing \cite{Keeler_Fujita_s_conjecture_and_Frobenius_amplitude} and \autoref{exc:S_0_equals_H_0}, there is an ample enough line bundle $\sQ$ on $X$, such that for all $i>0$  and $f$-nef line bundle $\sK$,
\begin{equation}
\label{eq:relative_canonical_nef2:assumption_central_fiber}
  H^0(X_0, \sQ \otimes \sK|_{X_0}) = S^0(X_0,  (\sQ \otimes \sK)|_{X_0})
\end{equation}
and
\begin{equation}
\label{eq:relative_canonical_nef2:global_generation}
\sQ \otimes \sK |_{X_y} \textrm{ is globally generated for all } y \in Y .
\end{equation}

Let $Q$ be a divisor of $\sQ$. We prove by induction that $qK_{X/Y} + Q$ is nef for all $q \geq 0$. For $q=0$ the statement is true by the choice of $Q$. Hence, we may assume that we $(q-1)K_{X/Y}  + Q$ is nef. Now, we verify that the conditions of Proposition \ref{prop:nef} hold for $N:=qK_{X/Y} + Q$ and $\sN:= \sO_X(N)$. Indeed:
\begin{itemize}
\item the divisor
\begin{equation*}
 N- K_{X/Y}  = (q-1)K_{X/Y}  + Q
\end{equation*}
is not only $f$-ample, but also nef by the inductional hypothesis,
\item using the $f$-nefness of $K_{X/Y} $ and \eqref{eq:relative_canonical_nef2:assumption_central_fiber},
\begin{equation*}
H^0(X_0, \sN|_{X_0}) = S^0(X_0,  \sN|_{X_0}),
\end{equation*}
\item since all the summands of $N$ are $f$-nef, so is $N$,
\item for every $y \in Y$,  $N|_{X_y}$ is globally generated by  \eqref{eq:relative_canonical_nef2:global_generation}.
\end{itemize}
Hence Proposition \ref{prop:nef} implies that $N$ is nef. This finishes our inductional step, and hence the proof of the nefness of $q K_{X/Y}  + Q$ for every $q \geq 0$. However, then $ K_{X/Y}$ has to be nef as well. This concludes our proof.
\end{proof}

\begin{proof}[Proof of \autoref{thm:semi_positivity}]
By \autoref{thm:relative_canonical_nef2}, we know that $K_{X/Y}$ is not only $f$-ample, but also nef. Further for every $m \gg 0$,
\begin{equation*}
H^0(X_0, \omega_{X/Y}^m|_{X_0})
= \underbrace{H^0(X_0, \omega_{X_0}^m)}_{\textrm{$X$ is Gorenstein over $Y$}}
= \underbrace{S^0(X_0, \omega_{X_0}^m)}_{\textrm{by \autoref{exc:S_0_equals_H_0}}}
= \underbrace{S^0(X_0, \omega_{X/Y}^m|_{X_0})}_{\textrm{$X$ is Gorenstein over $Y$}} .
\end{equation*}
Then \autoref{prop:semi_positive} applies to $N= \omega_{X/Y}^m$ for each $m \gg 0$, which concludes our proof.
\end{proof}

\subsection{Miscellaneous exercises}

\begin{exercise}
\label{exc:Del_Pezzos}
Show that a Del-Pezzo surface $X$ over an algebraically closed field $k$ of characteristic $p>0$ is globally $F$-regular if
\begin{enumerate}
 \item $K_X^2 \geq 4$,
\item $K_X^2 =3 $, and $p > 2$,
\item $K_X^2 =2 $, and $p > 3$ and
\item $K_X^2 =1 $, and $p > 5$.
\end{enumerate}
Further show that in the missing cases  there are examples of both globally $F$-regular and not globally $F$-regular del-Pezzos.

\Hint{Use \autoref{ex.ExtendingSections}.}
\end{exercise}

\begin{exercise}
Let $X$ be a smooth projective variety over $k$. Show that Frobenius stable canonical ring $R_S(X):=\bigoplus_{n \geq 0} S^0(X, \omega_X^m)$ is an ideal of the canonical ring $R(X):=\bigoplus_{n \geq 0} H^0(X, \omega_X^m)$. Further, show that $R_S(X)$ is a birational invariant (of smooth projective varieties over $k$). Then show that it does not change during a run of the MMP, where for singular varieties $R_S(X):=\bigoplus_{n \geq 0} S^0 \left(X, \omega_X^{[m]}\right)$ ($\sF^{[m]}$ denotes the reflexive power, that is, the double dual $(\sF^{\otimes m})^{\vee \vee}$  of the tensor power, see \cite{HartshorneGeneralizedDivisorsOnGorensteinSchemes} for the theory of reflexive sheaves). Deduce then that if the index of the canonical model $X_{\can}$ is coprime to $p$, then for $m$ divisible enough, $S^0(X, \omega_X^m)= H^0 \left(X_{\can}, \sigma(X_{\can}, 0 ) \otimes  \omega_{X_{\can}}^{[m]} \right)$. Give then an example of an $X$ for which $R_S(X)$ is not finitely generated as a ring.

\Hint{To show that $R_S(X)$ is a birational invariant, for two birational varieties $Z$ and $Y$ take a normal variety $W$ that maps with a  birational morphism to both $Z$ and $Y$. For example one can take $W$ to be the normalization of the closure of the graph of the birational equivalence. Finally show that $R_S(Z) = R_S(W) = R_S(Y)$ by showing that $H^0 \left(Z, \omega_Z^m \right) \cong H^0 \left(W, \omega_W^{[m]} \right)$ and $H^0 \left(Z, \omega_Z^{1+(m-1)p^e} \right) \cong H^0 \left(W, \omega_W^{[1+(m-1)p^e]} \right)$. There is one more subtlety: the trace maps also have to be identified, however that is not hard to do because of the open sets where the maps $W \to Z$ and $W \to Y$ are isomorphisms.

Showing that MMP does not change $R_S(X)$ is similar, using that for each step the discrepancies do not decrease. For the final conclusion use \autoref{exc:S_0_equals_H_0}.}
\end{exercise}

\begin{definition}
We define the Frobenius stable Kodaira-dimension $\kappa_S(X)$ of a smooth projective variety $X$ over $k$ to be the growth rate of $\dim_k S^0(X, \omega_X^m)$. That is, it is the integer $d$, such that there are positive real numbers $a$ and $b$ for which
\begin{equation*}
 a m^d <\dim_k S^0(X, \omega_X^m)< b m^d,
\end{equation*}
for every divisible enough $m$. We say $d= - \infty$ if $ S^0(X, \omega_X^m)=0$ for every $m >0$.
\end{definition}

\begin{exercise}
Show that if $\kappa_S(X)\geq0$ or $X$ is of general type then $\kappa_S(X)=\kappa(X)$. Find examples for which $\kappa_S(X)=- \infty$ and $\kappa(X)$ is any number between $0$ and $\dim X -1$.

(Note: solutions can be found in \cite[Section 4]{Hacon_Patakfalvi_A_generic_vanishing_in_positive_characteristic}).
\end{exercise}

\begin{exercise}
\label{exc:explicit_S_0}
Let $X$ be an irreducible hypersurface of degree $d$ in $\bP^n$ ($n \geq 2$) defined by $f(x_0,\dots, x_n)=0$ such that $X \cap D(x_0) \neq \emptyset$ (i.e., $f$ is not a polynomial of $x_0$). Let $\tilde{f}(x_1,\dots,x_n):= f(1,x_1,\dots,x_n)$ and let $P_l$ be the space of polynomials of degree at most $l$ in the variables $x_1, \dots, x_n$. Further, let $\Phi_e$ be the $k$-linear map $k[x_1,\dots,x_n] \to k[x_1,\dots,x_n]$, for which
\begin{equation*}
\Phi_e\left( \prod_{i=1}^n x_i^{j_i} \right) = \left\{
\begin{matrix}
\displaystyle\prod_{i=1}^n x_i^{\frac{j_i- p^e +1}{p^e}} & \textrm{if $p^e | j_i - p^e +1$ for all $i$} \\
0 & \textrm{otherwise}
\end{matrix}
\right. .
\end{equation*}
Consider then
\begin{equation*}
V_e:= \Phi_e\left( \tilde{f}^{p^e-1} \cdot P_{ (d-n-1)( 1+ (m-1) p^e)} \right).
\end{equation*}
That is, $V_e$ is  the image via $\Phi_e$ of the space containing all the polynomials that are obtained by multiplying a degree at most $(d-n-1)( 1+ (m-1) p^e)$ polynomial $p^e-1$ times with $\tilde{f}$. Let $W_e \subseteq V_e$ be the subspace of $V_e$ containing the polynomials divisible by $\tilde{f}$.

Show that then the image of
\begin{equation*}
H^0(X, \omega_X^{m-1} \otimes F^e_* \omega_X ) \to H^0(X, \omega_X^m)
\end{equation*}
can be identified with the quotient $V_e /W_e$. In particular,  $S^0(X, \omega_X^m)$ can be identified with $V_e/W_e$ for $e \gg 0$.

\Hint{Show that there is a commutative diagram as follows.
\begin{equation*}
\xymatrix{
H^0(\bP^n, \sO_{\bP^n}((1 + (m-1)p^e)(d-n-1))) \ar[r] \ar[d]^{\cdot f^{p^e-1}}
& H^0\left(X, \omega_X^{1 + (m-1)p^e} \right)  \ar[dd]^{\tr}
\\ H^0(\bP^n, \sO_{\bP^n}((1 + (m-1)p^e)(d-n-1)+ (p^e-1)d)) \ar[d]^{\tr}
\\ H^0(\bP^n, \sO_{\bP^n}(m(d-n-1))) \ar[r]
& H^0(X, \omega_X^m)
}
\end{equation*}
Then show that the horizontal arrows are surjective and restrict everything to $D(x_0)$.}
\end{exercise}

\begin{exercise}
Let $X$ be the surface in $\bP^3$ defined by $x^5 + y^5 + z^5+v^5$. Compute $S^0(\omega_X)$ for each prime $p$. Further, for $p=2$, show that $S^0(\omega_X)=S^0(\omega_X^2)=0$, but $S^0(\omega_X^3) \neq 0$.
\Hint{use \autoref{exc:explicit_S_0}.}
\end{exercise}

\subsection{Problems}

\begin{question}
Fujita's conjecture states that if $X$ is a smooth projective variety and $L$ an ample line bundle on it, then $K_X + (\dim X +1) L$ is free and $K_X + (\dim X + 2) L $ is very ample. This is not known in positive characteristic even for surfaces. For surfaces there are results when $L$ is special \cite{Ekedahl_Canonical_models_of_surfaces_of_general_type_in_positive_characteristic, Shepherd_Barron_Unstable_vector_bundles_and_linear_systems_on_surfaces_in_characteristic_p,Terakawa_The_d_very_ampleness_on_a_projective_surface_in_positive_characteristic, DiCerboFanelliEffectiveMatsusakaSurfaces}. 
However, the full conjecture is not known even in the surfaces case.
\end{question}

\begin{question}
What is the lowest $m$ such that the semi-positivity of \autoref{thm:semi_positivity}
holds? The question is already interesting if one fixes the dimension, so for example for families of surfaces. Note that for  $m=1$ the semi-positivity is known not to hold even for families of curves by \cite[3.2]{Moret_Bailly_Familles_de_courbes_et_de_varietes_abeliennes_sur_P_1_II_exemples}.
\end{question}

\begin{question}
Compute the semi-stable rank of the Hasse-Witt matrix or equivalently the dimension of $S^0(X, \omega_X)$  of general elements in the components of the moduli space of surfaces. By a conjecture of Grothendieck this is non-zero, so finding a component where this is zero would be interesting. A few related articles: \cite{Liedtke_Algebraic_surfaces_of_general_type_with_small_c_1_square_in_positive_characteristic,Liedtke_Non_classical_Godeaux_surfaces,Liedtke_Uniruled_surfaces_of_general_type,Miranda_Nonclassical_Godeaux_surfaces_in_characteristic_five,Hirokado_Singularities_of_multiplicative_p_closed_vector_fields_and_global_one_forms_of_Zariski_surfaces}.
\end{question}

\subsubsection{Log-Fano Varieties}
\label{subsec:log_Fanos}

\begin{question}
If $I \subseteq (0,1) \cap \bQ$ is a finite subset, then is there a $p_0$ depending only on $I$, such that for all log-Del Pezzos $(X, \Delta)$ for which the coefficients of $\Delta$ are in
\begin{equation*}
D(I):= \left\{ \left. \frac{m + \sum_{j=1}^l a_j i_j}{m+1} \right| m,l \in \bN, a_j \in \bN, i_j \in I  \right\} \cap [0,1]
\end{equation*}
$(X, \Delta)$ is strongly $F$-regular? Note that this is the two dimensional version of \cite[4.1]{Cascini_Gongyo_Schwede_Uniform_bounds_for_strongly_F_regular_surfaces}, and it might be hard in full generality. However, the question is already interesting for $I=\{1\}$ or for $D(I)$ replaced by any smaller set, for example by $\{1\}$. Other fixed $I$'s are also interesting.

The main interest in the question stems from applying it to threefold singularities, as in \cite{Cascini_Gongyo_Schwede_Uniform_bounds_for_strongly_F_regular_surfaces}, or to threefold fibrations the geometric generic fibers of which are log-Del Pezzos (in this case it would yield semi-positivity statements using \autoref{thm:semi_positivity}).  The question is interesting in any higher dimension as well, so for log-Del Pezzo replaced by log-Fano.
\end{question}

\begin{question}
Classify globally $F$-regular smooth Fano threefolds of Picard number 1. According to \cite{Shepherd_Barron_Fano_threefolds_in_positive_characteristic}, the classification in any characteristic agrees with the characteristic zero classification \cite{Iskovskikh_Prokhorov_Fano_varieties}, which is a finite list of deformation equivalence classes. Hence it should be possible to determine a list of globally $F$-regular ones as in \autoref{exc:Del_Pezzos}.
\end{question}

\begin{question}
For the cases when not all the Del-Pezzos are globally $F$-regular (see \autoref{exc:Del_Pezzos}), describe the locus of the non globally $F$-regular ones in the moduli space. That is, describe its dimension. Is it closed?
\end{question}

\begin{question}
In \cite{Mori_Saito_Fano_threefolds_with_wild_conic_bundle_structures} it is hinted to conclude the classification of smooth Fano threefolds along the line of the characteristic zero classification \cite{Iskovskikh_Prokhorov_Fano_varieties,Mori_Mukai_Classification_of_Fano_3_folds_with_Picard_number_at_least_two,Mori_Mukai_Erratum_classification_of_Fano_3_folds_with_Picard_number_at_least_two}. The Picard number one and two cases are done in \cite{Shepherd_Barron_Fano_threefolds_in_positive_characteristic} and  \cite{Saito_Fano_threefolds_with_Picard_number_2_in_positive_characteristic}. Finish the higher Picard number cases.
\end{question}

\subsubsection{MMP}

\begin{question}
Classify the singularities that the geometric general fiber of the Iitaka fibration of 3-folds can have. Recall that the Iitaka fibration is a fibration $f: X \to Y$ such that $\omega_X \cong f^* \sL$ for some big line bundle $\sL$. Here you can assume $X$ to be smooth for the first. In general, it would be interesting to have an answer for $X$ with terminal singularities, which is most likely hard. Note that in dimension two only cusps appear and only in characteristics 2 and 3 (see \cite{Mumford_Enriques_classification_of_surfaces_in_char_p,Bombieri_Mumford_Enriques_classification_of_surfaces_in_char_p_II,Bombieri_Mumford_Enriques_classification_of_surfaces_in_char_p_III,Schutt_Shioda_Elliptic_surfaces} and \cite[Chapter V]{Cossec_Dolgachev_Enriques_surfaces_I}).
\end{question}

\begin{question}
Can one run MMP for threefolds of characteristic 2, 3 or 5? The situation for characteristic $p>5$ has been recently mostly cleared out in a series of papers \cite{Hacon_Xu_On_the_three_dimensional_minimal_model_program_in_positive_characteristic,CasciniTanakaXuOnBPF,Birkar_Existence_of_flips_and_minimal_models_for_3_folds_in_char_p}. If $p \leq 5$, then though some generalized extremal contractions are known to exist \cite[0.5]{Keel_Basepoint_freeness_for_nef_and_big_line_bundle_in_positive_characteristics}, it is not known whether flips exist.
\end{question}

\begin{question}
Prove full cone theorem, and full contraction theorems for threefolds (at least in characteristic $p>5$). The existing statements, though are fantastic achievements, have a few unnatural hypotheses: line bundles are not semi-ample, only endowed with a map \cite[0.5]{Keel_Basepoint_freeness_for_nef_and_big_line_bundle_in_positive_characteristics}, there are finitely many extremal rays that are not known to contain rational curves \cite[0.6]{Keel_Basepoint_freeness_for_nef_and_big_line_bundle_in_positive_characteristics} (see also \cite[5.5.4]{Keel_Basepoint_freeness_for_nef_and_big_line_bundle_in_positive_characteristics}), if $K_X+B$ is not pseudo-effective then only a weak cone theorem is known \cite[1.7]{CasciniTanakaXuOnBPF}. The main question is whether these hypotheses can be removed.
\end{question}

\begin{question}
 Abundance for threefolds?
\end{question}

\begin{question}
MMP for 4-folds?
\end{question}

\subsubsection{Surface and threefold inequalities}

For minimal  surfaces of general type a few inequalities govern the possible values of standard numerical invariants (e.g., $K_X^2\geq 1$, $\chi(\sO_X) \geq 1$, $K_X^2 \geq 2 p_g -4$ (Noether inequality), $K_X^2 \leq 9 \chi(\sO_X)$, c.f., \cite[Section 1.2]{Bauer_Catanese_Pignatelli_Complex_surfaces_of_general_type_some_recent_progress}). Many of these are known either to hold, or there is a good understanding of when they fail in positive characteristic \cite[Sections 8.2, 8.3, 8.4]{Liedtke_Algebraic_surfaces_in_positive_characteristic}. There are a few questions concerning surfaces left and very little is known for threefolds.

\begin{question}
Is there a smooth surface of general type with $\chi(\sO_X) \leq 0$? Note that the possibilities are quite restricted (see \cite[Theorem 8]{Shepher_Barron_Geography_for_surfaces_of_general_type_in_positive_characteristic}).
\end{question}

\begin{question}
Minimal Gorenstein threefolds of general type are known to have $\chi(\sO_X)>0$ in characteristic zero. Is this true in positive characteristic? If not, what are the exceptions? The expectations are statements as \cite[8.4,8.5]{Liedtke_Algebraic_surfaces_in_positive_characteristic}.
\end{question}

\begin{question}
If $X$ is a minimal threefold of general type then is there a lower bound for $K_X^3$ in terms of a (linear) function of the geometric genus $\rho_g(X)$? Note that in characteristic zero this has been shown in \cite{Kobayashi_On_Noether_s_inequality_for_threefolds}, and the analogous statement is known for surfaces of positive characteristic \cite{Liedtke_Algebraic_surfaces_of_general_type_with_small_c_1_square_in_positive_characteristic}.
\end{question}

\section{Seshadri constants, $F$-pure centers and test ideals}
\label{sec:SeshadriEtc}
In \autoref{sec:TraceOfFrob} we introduced $S^0$ and showed how global sections can be extended from them via adjunction along divisors.  At the start of this section we discuss other ways to produce sections in $S^0$.  First we recall Seshadri constants and $F$-pure centers and how to use them to produce sections in $S^0$.  Then, building upon and generalizing the definition of $F$-pure centers we introduce test ideals and explore a number of open questions about them.

We begin with Seshadri constants.

\subsection{Seshadri constants}

Recall the following definition originally found in \cite{DemaillyANumericalCriterionForVeryAmple}.

\begin{definition}[Seshadri constants] \cite{LazarsfeldPositivity1}
Suppose $X$ is a projective variety and $L$ is an ample (or big and nef) line bundle.  Choose $z \in X$ a smooth closed point, let $\pi : Y \to X$ be the blowup of $z$ with exceptional divisor $E$.  We define the Seshadri constant of $L$ at $z$ as
\[
\varepsilon(L, z) = \sup\{t > 0\;|\; \pi^* L - tE \text{ is nef} \}.
\]
\end{definition}

The Seshadri constant is a local measure of positivity of $L$ at $z$.  Its usefulness comes from the following theorem (and variants).

\begin{theorem}
If $X$ is a projective variety over $\bC$ and if $\varepsilon(L, z) > \dim X$ for some smooth closed point $z \in X$, then $\omega_X \otimes L$ is globally generated at $z$.
\end{theorem}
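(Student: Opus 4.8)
The plan is to translate ``globally generated at $z$'' into a first-cohomology vanishing on the blowup $\pi : Y \to X$ of $z$, with exceptional divisor $E$, and then apply Kawamata--Viehweg vanishing. I will take $X$ smooth, which is the essential case (for $X$ normal projective with $z$ a smooth point one pulls back to a resolution $\mu : X' \to X$ that is an isomorphism near $z$ and uses $\mu_*\omega_{X'} \subseteq \omega_X$). Write $M = \omega_X \otimes L$, $n = \dim X$, and let $\mathfrak{m}_z \subseteq \O_X$ be the ideal sheaf of $z$. From the exact sequence $0 \to M \otimes \mathfrak{m}_z \to M \to M \otimes k(z) \to 0$ it suffices to prove $H^1(X, M \otimes \mathfrak{m}_z) = 0$.

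First I would pass to $Y$. Since $z$ is a smooth point, $E \cong \bP^{n-1}$, and from $0 \to \O_Y(-E) \to \O_Y \to \O_E \to 0$, the surjectivity of $\O_X \to k(z)$, and the vanishing of $H^{>0}(\bP^{n-1}, \O)$, one gets $\pi_*\O_Y(-E) = \mathfrak{m}_z$ and $R^i\pi_*\O_Y(-E) = 0$ for $i > 0$. Hence by the projection formula ($M$ invertible) and Leray, $H^1(X, M \otimes \mathfrak{m}_z) \cong H^1(Y, \pi^*M \otimes \O_Y(-E))$. Using $K_Y = \pi^*K_X + (n-1)E$, this rewrites as $H^1\big(Y, \omega_Y \otimes (\pi^*L - nE)\big)$.

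It remains to kill this group, for which I would check that $\pi^*L - nE$ is nef and big and then invoke Kawamata--Viehweg on $Y$. Nefness is immediate from $n < \varepsilon(L,z)$ and the closedness of the nef cone. For bigness, note that nefness of $\pi^*L - tE$ forces $0 \le (\pi^*L - tE)^n = L^n - t^n$, whence the standard bound $\varepsilon(L,z) \le (L^n)^{1/n}$; combined with $\varepsilon(L,z) > n$ this gives $L^n > n^n$, so $(\pi^*L - nE)^n = L^n - n^n > 0$, and a nef divisor of positive top self-intersection is big. Then $H^i(Y, \omega_Y \otimes (\pi^*L - nE)) = 0$ for all $i>0$, in particular for $i=1$, which completes the proof. (One could equally run the argument through Nadel vanishing for a suitable multiplier ideal, but the blowup route is the most direct here.)

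The main obstacle is exactly the appeal to Kawamata--Viehweg vanishing: it is the one ingredient with no unconditional characteristic-$p$ analogue, which is why the hypothesis ``over $\bC$'' is essential and which is precisely what the rest of the paper works around via the Frobenius-stable sections $S^0$. The remaining points are routine: the identification $R^i\pi_*\O_Y(-E) = 0$ for $i>0$ (so Leray collapses to the isomorphism above), the formula $K_Y = \pi^*K_X + (n-1)E$ for the blowup of a smooth point, and the numerical verification that $\pi^*L - nE$ is big and not merely nef.
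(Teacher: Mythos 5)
Your proposal is correct and follows essentially the same route as the paper: blow up $z$, convert global generation at $z$ into the vanishing of $H^1(Y, \omega_Y(\pi^*L - nE))$, and kill that group with Kawamata--Viehweg. The paper phrases the reduction as a comparison of two four-term exact sequences with isomorphisms left as an exercise, whereas you collapse directly to the $H^1$ isomorphism via $R^i\pi_*\O_Y(-E)$ and Leray; you also carefully supply the bigness of $\pi^*L - nE$ (via $\varepsilon(L,z) \le (L^n)^{1/n}$ and ``nef with $(\cdot)^n > 0$ implies big''), which the paper merely asserts. Both are the same argument in substance.
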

\begin{proof}
Let $n = \dim X$ so that $\pi^* L - nE$ is big and nef.  Consider the following diagram:
    \[
    {\scriptsize
    \xymatrix@C=10pt{
    0 \ar[r] & H^0(X, \bm_z \otimes \omega_X \otimes L) \ar[r] & H^0(X, \omega_X \otimes L) \ar[r]^{\alpha} & H^0(X,\omega_X \otimes L / \bm_z) \ar[r] &H^1(X, \bm_z \otimes \omega_X \otimes L)\\
    0 \ar[r] & H^0(Y, \omega_Y(\pi^* L - nE)) \ar[u] \ar[r] & H^0(Y, \omega_Y(\pi^* L - (n-1)E)) \ar[u]^{\sim} \ar[r] & H^0(E, \omega_E(\pi^* L - n E)) \ar[u]^{\sim} \ar[r] & H^1(Y, \omega_Y(\pi^* L - nE)) \ar[u] \\
    }
    }
    \]
It is an exercise left to the reader to verify the isomorphisms indicated.  By Kawamata-Viehweg vanishing $H^1(Y, \omega_Y(\pi^* L - nE)) = 0$ and hence $\alpha$ surjects proving that $\omega_X \tensor L$ is globally generated at $z$.
\end{proof}

\begin{exercise}
Work in characteristic $p > 0$ and assume that $L$ is ample.  Show that in fact that if $\varepsilon(L, z) > \dim X$ then $S^0(X, \omega_X \otimes L)$ has a section which does not vanish at $z$.
\vskip 3pt
\Hint{Add another row to the diagram with vertical up-pointing arrows induced by Frobenius.  Replace then Kawamata-Viehweg vanishing with Serre vanishing.}
\end{exercise}

\begin{question}[Perhaps hard]
\label{quest.LowerBoundSeshadri}
Are there lower bounds for Seshadri constants at very general points in characteristic $p > 0$ ala \cite{EinKuchleLazarsfeldLocalPositivitys}?
\end{question}

There is another version of Seshadri constants which has recently been studied in characteristic $p > 0$ \cite{Mustata_Schwede_A_Frobenius_variant_of_Seshadri_constants}.  This definition is inspired by the characterization of ordinary Seshadri constants described by separation of jets \cite[Chapter 5]{LazarsfeldPositivity1}.

Given a positive integer $e$, we say that a line bundle $L$ on $X$ \emph{separates
$p^e$-Frobenius jets at $z$} if the restriction map
\begin{equation}\label{restriction_Frobenius}
H^0(X,L)\to H^0(X,L\otimes\cO_X/\bm_z^{[p^e]})
\end{equation}
is surjective (here $\bm_z^{[p^e]}$ is the ideal generated by the $p^e$th powers of the elements of $\bm_z$).

Let $s_F(L^m, z)$ be the largest $e\geq 1$ such that $L^m$ separates
$p^e$-Frobenius jets at $z$ (if there is no such $e$, then we put
$s_F(L^m, Z)=0$).

\begin{definition}[$F$-Seshadri constants] \cite{Mustata_Schwede_A_Frobenius_variant_of_Seshadri_constants}
Let $L$ be ample, the \emph{Frobenius-Seshadri constant of $L$ at $z$} is
$$\epsilon_F(L, z):=\sup_{m\geq 1} \frac{p^{s_F(L^m; z)}-1}{m}.$$
\end{definition}

It is not difficult to show that $\frac{\epsilon(L,z)}{n}\leq \epsilon_F(L, z)\leq\epsilon(L, z)$, see \cite[Proposition 2.12]{Mustata_Schwede_A_Frobenius_variant_of_Seshadri_constants}.  Hence \autoref{quest.LowerBoundSeshadri} is equivalent to:

\begin{question}
Are there lower bounds for $F$-Seshadri constants at very general points in characteristic $p > 0$?
\end{question}

We also have

\begin{theorem}
If $\epsilon_F(L, z) > 1$ then $S^0(X, \sigma(X, 0) \otimes \O_X(L + K_X))$ globally generates $\omega_X \otimes L$ at $z$.
\end{theorem}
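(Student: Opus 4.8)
The plan is to run the proof of the classical Seshadri‑to‑global‑generation theorem stated above, making two substitutions: Kawamata--Viehweg vanishing is replaced by Serre vanishing applied after a Frobenius pushforward, and the blow‑up of $z$ is replaced by the infinitesimal thickening $\O_X/\bm_z^{[p^e]}$ that detects Frobenius jets. Unwinding the definition of $S^0$, and using $\O_X\big((1-p^e)K_X\big)\otimes(\omega_X\otimes L)^{p^e}\cong\omega_X\otimes L^{p^e}$, the object in question is
\[
S^0\big(X,\sigma(X,0)\otimes\O_X(L+K_X)\big)=\Image\Big(H^0\big(X,F^e_*(\omega_X\otimes L^{p^e})\big)\xrightarrow{\Tr_{F^e}\otimes L}H^0(X,\omega_X\otimes L)\Big)
\]
for $e\gg 0$, where $\Tr_{F^e}\colon F^e_*\omega_X\to\omega_X$ is the trace of Frobenius. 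As $\omega_X\otimes L$ is a line bundle, it suffices to produce, for some admissible $e$, a section in this image that does not vanish at $z$.

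First I would pin down the numerical data. From $\epsilon_F(L,z)>1$ I get integers $m\ge 1$ and $e:=s_F(L^m,z)\ge 1$ with $p^e-1>m$, so that $H^0(X,L^m)\to H^0(X,L^m\otimes\O_X/\bm_z^{[p^e]})$ is surjective; in particular $L^m$ is globally generated at $z$. Next I would note that $e$, and with it $p^e-m$, may be taken arbitrarily large: replacing $L^m$ by $L^{m(1+p^e+\cdots+p^{(k-1)e})}=L^{m(p^{ke}-1)/(p^e-1)}$ preserves the ratio $(p^e-1)/m>1$ while raising the jet‑separation exponent to $p^{ke}$, which is the Frobenius‑power monotonicity of $s_F$ from \cite{Mustata_Schwede_A_Frobenius_variant_of_Seshadri_constants} (the underlying manipulation is the one in the hint to \autoref{exc:explicit_gfr_general}). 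So I fix $(m,e)$ with $e$ large enough that the image displayed above already equals the stable image $S^0$, and large enough that $\omega_X\otimes L^{p^e-m}$ is globally generated at $z$ (Serre vanishing / Castelnuovo--Mumford regularity).

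Then comes the construction. Working near the smooth point $z$ in a regular system of parameters $x_1,\dots,x_n$, so that $\bm_z^{[p^e]}=(x_1^{p^e},\dots,x_n^{p^e})$ locally, and in local frames, I would pick $s\in H^0(X,L^m)$ with $s\equiv x_1^{p^e-1}\cdots x_n^{p^e-1}\pmod{\bm_z^{[p^e]}}$ (jet separation) and $r\in H^0(X,\omega_X\otimes L^{p^e-m})$ with $r(z)\ne 0$, and set $t:=rs\in H^0(X,\omega_X\otimes L^{p^e})$. Since $x_i\cdot(x_1\cdots x_n)^{p^e-1}\in(x_i^{p^e})$, we get $\bm_z\cdot(x_1\cdots x_n)^{p^e-1}\subseteq\bm_z^{[p^e]}$, and therefore $t\equiv r(z)\,(x_1\cdots x_n)^{p^e-1}\cdot(\text{frame})$ modulo $\bm_z^{[p^e]}(\omega_X\otimes L^{p^e})$. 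Viewing $t$ as a section of $F^e_*(\omega_X\otimes L^{p^e})=F^e_*\omega_X\otimes L$ and applying $\Tr_{F^e}\otimes L$: since $\Tr_{F^e}$ is $F^e_*\O_X$‑linear it carries $F^e_*(\bm_z^{[p^e]}\omega_X)$ into $\bm_z\omega_X$, and by the exercise identifying $\Tr_{F^e}$ locally (up to a unit) with the projection onto the free $F^e_*\O_X$‑summand generated by $(x_1\cdots x_n)^{p^e-1}$, it sends the class of $(x_1\cdots x_n)^{p^e-1}\cdot(\text{frame})$ to a unit times the frame of $\omega_X\otimes L$. Hence $(\Tr_{F^e}\otimes L)(t)$ is congruent to $r(z)\cdot(\text{unit})\cdot(\text{frame of }\omega_X\otimes L)$ modulo $\bm_z(\omega_X\otimes L)$; as $r(z)\ne 0$ it is a section of $S^0(X,\sigma(X,0)\otimes\O_X(L+K_X))$ not vanishing at $z$, which finishes the argument.

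The step I expect to be the real obstacle is the enlargement of $e$ in the second paragraph — equivalently, the good behaviour of Frobenius jet separation under $(p^{ke}-1)/(p^e-1)$‑th powers. The mechanism is that the $p^e$‑th power of a section controlled modulo $\bm_z^{[p^e]}$ is controlled modulo $\bm_z^{[p^{2e}]}$ (because $(\bm_z^{[p^e]})^{[p^e]}\subseteq\bm_z^{[p^{2e}]}$), so multiplying such ``high‑frequency'' jets by ordinary sections of $L^m$ yields enough global sections of $L^{m(1+p^e)}$ to separate $p^{2e}$‑jets; iterating gives the claim, and this is carried out (using that $k$ is perfect) in \cite{Mustata_Schwede_A_Frobenius_variant_of_Seshadri_constants}. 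Beyond that the argument uses only elementary local algebra at a smooth point together with Serre vanishing on $X$, with no characteristic‑zero ingredient — which is exactly why $\epsilon_F(L,z)>1$ suffices here in place of the classical hypothesis $\varepsilon(L,z)>\dim X$.
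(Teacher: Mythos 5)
The paper states this theorem without proof (it is quoted from \cite{Mustata_Schwede_A_Frobenius_variant_of_Seshadri_constants}), so there is no in-text argument to compare against; your proof reconstructs the intended one correctly. The construction --- choose $s\in H^0(X,L^m)$ with $s\equiv (x_1\cdots x_n)^{p^e-1}\pmod{\bm_z^{[p^e]}}$ by Frobenius jet separation, choose $r\in H^0(X,\omega_X\otimes L^{p^e-m})$ nonvanishing at $z$ using ampleness of $L$ and $p^e-m\gg 0$, set $t=rs$, and apply $\Tr_{F^e}\otimes L$ --- does exactly what is needed, and both auxiliary facts you invoke (that $s_F$ climbs from $e$ to $ke$ under $L^m\rightsquigarrow L^{m(p^{ke}-1)/(p^e-1)}$, and that $\Tr_{F^e}$ is locally the projection onto the $(x_1\cdots x_n)^{p^e-1}$-summand that kills $F^e_*\bm_z^{[p^e]}\cdot\omega_X=\bm_z\cdot F^e_*\omega_X$ into $\bm_z\omega_X$) are accurate and appropriately sourced. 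One small correction: $\Tr_{F^e}$ is $\O_X$-linear, hence $p^{-e}$-linear on scalars, so the output is congruent to $r(z)^{1/p^e}\cdot(\text{unit})\cdot(\text{frame})$ rather than $r(z)\cdot(\text{unit})\cdot(\text{frame})$ modulo $\bm_z(\omega_X\otimes L)$; since $k$ is perfect this is still nonzero, so the conclusion is unaffected. You might also add a sentence noting that your appeal to ``$e$ large enough that the displayed image equals $S^0$'' presupposes the stabilization of the images $\Image(\alpha_e)$ for the possibly non-ample twist $M=\omega_X\otimes L$ --- the paper itself flags this as subtle --- but since the theorem statement already presumes $S^0$ is well-defined and your section lies in $\Image(\alpha_e)$ for all $e\gg 0$ simultaneously, this is harmless.
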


\begin{exercise}
Use the above theorem to give another proof of the fact that if $\epsilon(L, z) > \dim X$ then $S^0(X, \sigma(X, 0) \otimes \O_X(L + K_X))$ globally generates $\omega_X \otimes L$ at $z$.
\end{exercise}

Perhaps a more approachable question (inspired by the behavior of the usual Seshadri constant) is:

\begin{question}
If $L_1$ and $L_2$ are ample divisors and $z \in X$ is a smooth closed point then show that:
$$\epsilon_F(L_1\otimes L_2; z)\geq\epsilon_F(L_1; z)+\epsilon_F(L_2; z).$$
\end{question}

Since the Frobenius Seshadri constant is distinct from the ordinary Seshadri constant, another interesting question is:

\begin{question}
What does the $F$-Seshadri constant correspond to in characteristic zero?  Is there a geometric characteristic zero definition that behaves in the same way philosophically?  (The toric case for torus invariant points is handled at the end of \cite{Mustata_Schwede_A_Frobenius_variant_of_Seshadri_constants}).
\end{question}

\subsection{$F$-pure centers}

Now we move on to $F$-pure centers.  If the reader is introduced to $S^0$, one exercise to keep in mind is \autoref{ex.SurjectionOfS0ForCenters} which generalizes \autoref{ex.ExtendingSections} from the case of divisors to higher codimension $F$-pure centers.

\begin{definition}
Suppose $(X, \Delta)$ is a pair with $\Delta \geq 0$ an effective $\bQ$-divisor such that $(p^e - 1)(K_X + \Delta)$ is Cartier.  Let $\phi_{e, \Delta} : F^e_* \sL \to \O_X$ be the corresponding map.  We say that a subvariety $Z \subseteq X$ is \emph{an $F$-pure center of $(X, \Delta)$} if
\begin{itemize}
\item{} $\phi_{e, \Delta}(F^e_* (I_Z \cdot \sL)) \subseteq I_Z$ and
\item{} $(X, \Delta)$ is $F$-pure (ie $\phi_{e, \Delta}$ is surjective) at the generic point of $Z$.
\end{itemize}
\end{definition}

\begin{remark}
In the case that $\phi_{e, \Delta} : F^e_* \O_X \to \O_X$ is a splitting of Frobenius\footnote{Where again $\Delta \geq 0$ is necessarily an effective $\bQ$-divisor such that $(p^e - 1)(K_X + \Delta)$ is Cartier, as above.}, the $F$-pure centers are called the \emph{compatibly split subvarieties of $\phi_{e, \Delta}$}.
\end{remark}

\begin{exercise}
With $(X, \Delta)$ as above, show that $Z$ is an $F$-pure center if and only if
\begin{itemize}
\item{} for every Cartier divisor $H$ containing $Z$ and every $\varepsilon > 0$, $(X, \Delta + \varepsilon H)$ is not $F$-pure and
\item{} $(X, \Delta)$ is $F$-pure at the generic point of $Z$.
\end{itemize}
If you know the definition of a log canonical center, prove that the analogous characterization can be used to define log canonical centers.
\end{exercise}

Let's do an example.  First we state (but don't prove) a variant of Fedder's criterion for $F$-pure centers.

\begin{lemma}
Suppose that $S = k[x_1, \ldots, x_n]$ and $R = S/I$ and $X = \Spec R$.  If $K_X$ is $\bQ$-Cartier with index not divisible by $p > 0$, then, after localizing further if necessary, $I^{[p^e]} : I = \langle g_e \rangle + I^{[p^e]}$ for some single polynomial $g_e$ (depending on $e$).  Then $V(Q) = Z \subseteq X$ is an $F$-pure center of $(X, 0)$ if and only if $g_e \in Q^{[p^e]} : Q$ and $g_e \notin Q^{[p^e]}$.
\end{lemma}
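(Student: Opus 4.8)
The plan is to reduce the statement, via the lifting mechanism already used in the proof sketch of Fedder's criterion (\autoref{prop:Fedder_s_criterion}), to a purely ideal-theoretic computation with the trace map $\Phi:=\Phi^e_S$ that generates $\Hom_S(F^e_*S,S)$ as an $F^e_*S$-module.

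\emph{Set-up and the existence of $g_e$.} Fix $e$ with $(p^e-1)K_X$ Cartier (such an $e$ exists since the index of $K_X$ is prime to $p$, by the earlier exercise). After localizing $R$ further — away from $V(Q)$, so as not to disturb a dense open of $Z$ — we may assume $\O_X((1-p^e)K_X)$ is free, so that by Grothendieck duality for the finite map $F^e$ the module $\Hom_R(F^e_*R,R)\cong F^e_*\O_X((1-p^e)K_X)$ is free of rank one over $F^e_*R$, with generator the map $\phi_{e,0}$ attached to $\Delta=0$: under \autoref{prop.MapDivisorCorrespondence} the Weil divisor $D_\phi$ of a map $\phi$ vanishes precisely when $\phi$ generates $\sHom_{\O_X}(F^e_*\O_X,\O_X)$, and $\Delta=0$ forces $D_\phi=0$. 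On the other hand, from the proof of Fedder's criterion every element of $\Hom_S(F^e_*S,S)$ has the form $\Phi(F^e_*c\cdot\blank)$ for a unique $c\in S$; this map descends to $\Hom_R(F^e_*R,R)$ iff $c\in(I^{[p^e]}:I)$, and two such $c$'s induce the same map on $R$ iff they differ by an element of $I^{[p^e]}$. Hence $\Hom_R(F^e_*R,R)\cong(I^{[p^e]}:I)/I^{[p^e]}$ as $F^e_*R$-modules, so $(I^{[p^e]}:I)/I^{[p^e]}$ is principal over $R$; take $g_e$ to be a lift of $\phi_{e,0}$ under this isomorphism, which gives simultaneously $I^{[p^e]}:I=\langle g_e\rangle+I^{[p^e]}$ and the fact that $\Phi(F^e_*g_e\cdot\blank)$ reduces mod $I$ to $\phi_{e,0}$.

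\emph{The Fedder-type identity.} The technical heart is: for any ideal $J\subseteq S$ and any $g\in S$,
\[
\Phi\bigl(F^e_*(gJ)\bigr)\subseteq J\ \Leftrightarrow\ g\in J^{[p^e]}:J,\qquad \Phi\bigl(F^e_*(gS)\bigr)\subseteq J\ \Leftrightarrow\ g\in J^{[p^e]}.
\]
One proves this using the free $S$-basis $\{F^e_*(x^\beta)\}_{0\le\beta_i<p^e}$ of $F^e_*S$, under which $\Phi$ is (up to a unit, as in the exercises above) the projection onto $F^e_*(x_1\cdots x_n)^{p^e-1}$. For ``$\subseteq$'' one writes $g=\sum_\alpha h_\alpha j_\alpha^{p^e}$ with $j_\alpha\in J$ (resp.\ $j_\alpha\in S$) and uses $F^e_*S$-linearity of $\Phi$; for the converse one writes $g=\sum_\beta x^\beta t_\beta^{p^e}$ and recovers each coordinate as $t_\beta=\Phi\bigl(F^e_*(x^\gamma g)\bigr)$ for $\gamma=(p^e-1,\dots,p^e-1)-\beta$, forcing every $t_\beta\in J$, i.e.\ $g\in J^{[p^e]}$.

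\emph{Translation of the two conditions.} Write $Q$ also for the preimage in $S$ of the prime $I_Z\subseteq R$, so $I\subseteq Q$ and $I_Z=Q/I$. The compatibility $\phi_{e,0}(F^e_*(I_Z\cdot\O_X))\subseteq I_Z$, lifted along $S\to R$ (using that $\Phi(F^e_*g_e\cdot\blank)$ reduces to $\phi_{e,0}$ and that $I\subseteq Q$), becomes $\Phi(F^e_*(g_eQ))\subseteq Q$, which by the identity is exactly $g_e\in Q^{[p^e]}:Q$. Next, by the parenthetical in the definition of an $F$-pure center, $(X,0)$ being $F$-pure at the generic point of $Z$ means $\phi_{e,0}$ is surjective there, i.e.\ $\Image(\phi_{e,0})\not\subseteq Q$ in $R$ (with $Q$ prime); lifting, $\Image\bigl(\Phi(F^e_*g_e\cdot\blank)\bigr)=\Phi(F^e_*(g_eS))\not\subseteq Q$, which by the identity is $g_e\notin Q^{[p^e]}$. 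Combining, $Z$ is an $F$-pure center of $(X,0)$ iff $g_e\in Q^{[p^e]}:Q$ and $g_e\notin Q^{[p^e]}$; both conditions are unchanged under replacing $g_e$ by a unit multiple modulo $I^{[p^e]}\subseteq Q^{[p^e]}$, which disposes of the ambiguity in the choice of $g_e$. The main obstacle is not the monomial computation behind the identity, which is routine, but the bookkeeping in this last step — keeping track of which of $R$, $S$ each ideal and each image lives in, and verifying that the lifted map is genuinely the distinguished $\phi_{e,0}$ rather than some unit twist of it.
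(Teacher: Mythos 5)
The paper explicitly states this Lemma without proof (``we state (but don't prove) a variant of Fedder's criterion for $F$-pure centers''), so there is no proof of record to compare against; your proposal is instead to be judged on its own.

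It is correct, and it is the natural proof: you reuse the lifting mechanism from the Fedder's-criterion sketch already in the paper — every $\phi\in\Hom_R(F^e_*R,R)$ lifts to $\Phi_S(F^e_*c\cdot\blank)$ for a unique $c\in S$ modulo $I^{[p^e]}$, and a lift exists precisely when $c\in I^{[p^e]}:I$ — combined with the $\bQ$-Gorenstein hypothesis, which (after trivializing $\O_X((1-p^e)K_X)$) makes $\Hom_R(F^e_*R,R)\cong (I^{[p^e]}:I)/I^{[p^e]}$ a cyclic module whose generator $g_e$ lifts $\phi_{e,0}$. The two colon-ideal identities $\Phi(F^e_*(gJ))\subseteq J\Leftrightarrow g\in J^{[p^e]}{:}J$ and $\Phi(F^e_*(gS))\subseteq J\Leftrightarrow g\in J^{[p^e]}$ are exactly what's needed, and your proof of them via the monomial basis and the ``recover $t_\beta$ as $\Phi(F^e_*(x^\gamma g))$'' trick is the standard one and is correct. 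The translation of compatibility ($g_e\in Q^{[p^e]}{:}Q$) and of $F$-purity at the generic point ($g_e\notin Q^{[p^e]}$, using that $Q$ is prime so an ideal not contained in $Q_Q$ is not contained in $Q$) is also right, as is the observation that both conditions are invariant under the ambiguity in $g_e$ (addition of $I^{[p^e]}\subseteq Q^{[p^e]}$ and unit multiples).

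One small wording issue: ``localizing $R$ further — away from $V(Q)$'' reads as if you are removing $V(Q)$, which would trivialize the statement. What you mean (and what the argument requires) is to localize to an open neighborhood of the generic point of $Z$, i.e.\ to invert some $f\notin Q$ so that $\O_X((1-p^e)K_X)$ becomes free while the generic point of $Z$ survives. Phrase it that way. Also worth noting explicitly, since you rely on it: the fact that $\phi_{e,0}$ is a generator of $\sHom_{\O_X}(F^e_*\O_X,\O_X)$ as an $F^e_*\O_X$-module is the statement that the Grothendieck trace map generates, which is precisely the content of the exercise on $\Tr_{F^e}$ earlier in the notes — it does not require unwinding \autoref{prop.MapDivisorCorrespondence}.
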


\begin{exercise}
Suppose that $X = \Spec k[x_1, \ldots, x_n] / \langle x_1 x_2 \cdots x_n\rangle$.  Identify the $F$-pure centers of $(X, 0)$.
\end{exercise}

Let's prove inversion of $F$-adjunction.

\begin{exercise}[Inversion of $F$-adjunction]
Suppose $(X, \Delta)$ is a pair and $Z \subseteq X$ is an $F$-pure center.  For simplicity assume that $Z$ is normal.  Show that the map $\phi_{e, \Delta} : F^e_* \sL \to \O_X$ induces a map $\phi_Z : F^e_* \sL|_Z \to \O_Z$ and hence an effective $\bQ$-divisor $\Delta_Z$ (this wasn't covered in the lecture but see the notes above).  Show that $(K_X + \Delta)|_Z \sim_{\bQ} K_Z + \Delta_Z$.  Further show that $(X, \Delta)$ if $F$-pure in a neighborhood of $Z$ if and only if $(Z, \Delta_Z)$ is $F$-pure.
\end{exercise}


It is not hard to show that $Z \subseteq X$ is a log canonical center of $(X, \Delta)$ then $Z$ is also an $F$-pure center as long as $(X, Z)$ is $F$-pure at the generic point of $Z$ (using the fact that $F$-pure singularities are log canonical \cite{HaraWatanabeFRegFPure}).

\begin{exercise}
Prove the above assertion.
\end{exercise}

Based on recent work \cite{MustataSrinivasOrdinary,MustataOrdinary2} consider the following conjecture.

\begin{conjecture}[Weak ordinarity]
Suppose that $X$ is a smooth projective variety over $\bC$, $A = \Spec R$ where $R$ is a finitely generated $\bZ$-algebra containing $\bZ$ and $X_A \to A$ is a spreading out of $X$ over $A$ (so that $X_A \times_A \bC \cong X$), in other words a family of characteristic $p > 0$ models of $X$.  Then there exists a Zariski dense set of closed points $U \subseteq A$ such that for every point $p \in U$ we have that if $X_p = X_A \times_A k(p)$ then the Frobenius morphism:
\[
H^i(X, \O_X) \to H^i(X, F^e_* \O_X)
\]
is bijective.
\end{conjecture}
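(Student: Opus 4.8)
\emph{Step 1: reinterpret the statement crystallically.} The plan is to reduce the conjecture to producing a single ordinary reduction for infinitely many primes and then to identify that as the real obstacle. Fix $i$ and replace $X_p$ by its base change $X_{\overline p}$ to $\overline{\bF}_p$. For $X_{\overline p}$ smooth and proper the $p$-linear endomorphism $F\colon H^i(X_{\overline p},\cO)\to H^i(X_{\overline p},F_*\cO)=H^i(X_{\overline p},\cO)$ is bijective precisely when the slope-$0$ part of the $F$-isocrystal $H^i_{\mathrm{cris}}(X_{\overline p}/W)\otimes\bQ$ has dimension $\dim_kH^i(X_{\overline p},\cO)$; for $p\gg 0$ this last number equals $h^i(\cO_X):=\dim_{\bC}H^i(X,\cO_X)$, and the condition then says the Newton polygon of $H^i_{\mathrm{cris}}$ shares its first break point $(h^i(\cO_X),0)$ with the Hodge polygon -- the degree-$i$ piece of Bloch--Kato--Illusie ordinarity, equivalently the vanishing of $H^j(X_{\overline p},B\Omega^1_{X_{\overline p}})$ in the relevant range. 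Since the conjecture asks for bijectivity in every degree simultaneously and $H^i(\cO_X)=0$ for $i>\dim X$, it suffices to treat each $i$ separately and then intersect the resulting loci.

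\emph{Step 2: reduce to nonemptiness of the ordinary locus.} Work over a single prime $p$, i.e.\ over the fibre $\Spec(A\otimes\bF_p)$, and use two standard inputs. First, by Grothendieck's specialization theorem for $F$-crystals (refined by Katz) Newton polygons only rise under specialization, so the ``ordinary in degree $i$'' locus $U^{\mathrm{ord}}_{i,p}\subseteq\Spec(A\otimes\bF_p)$ -- where the Newton polygon of $H^i_{\mathrm{cris}}$ equals its Hodge polygon -- is open, its complement being a finite union of Newton-polygon strata. Second, for $p\gg 0$ the fibre $X_p$ lifts modulo $p^2$ (it is the reduction of a smooth proper family over a $\bZ$-algebra), so Deligne--Illusie applies: Hodge--de Rham degenerates, $H^i_{\mathrm{cris}}$ is torsion-free of the expected rank, and Mazur's inequality puts the Newton polygon on or above the Hodge polygon, so ``ordinary'' indeed means equality at the first break. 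Hence $U^{\mathrm{ord}}_{i,p}$ is either empty or open dense in the fibre, and if it is nonempty for infinitely many $p$ (simultaneously for all $i$) then $\bigcap_i\bigcup_p U^{\mathrm{ord}}_{i,p}$ is a Zariski dense set of closed points of $\Spec A$, because any nonempty open of $\Spec A$ meets fibres over infinitely many primes. So the conjecture is equivalent to exhibiting, for infinitely many $p$, one member of $X_A\otimes_A\bF_p\to\Spec(A\otimes\bF_p)$ whose $H^i_{\mathrm{cris}}$ is ordinary.

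\emph{Step 3: the obstacle and a line of attack.} This last step is the whole content and is genuinely open: nothing forces the generic Newton polygon of an arbitrary family to touch its Hodge polygon, and the difficulty is arithmetic -- already for $X$ an elliptic curve it is the density of ordinary primes (Serre), and even the abelian-variety case needs an argument (Serre--Tate theory, or $\ell$-adic monodromy). For a concrete attack I would start from cases with a hand-made ordinary reduction -- toric varieties (ordinary for every $p$, Frobenius being explicit on the torus-invariant cohomology), abelian varieties and quotients or torsors built from them, and varieties carrying a sufficiently cohomology-controlling dominant map from such -- and then try a Lefschetz-section strategy: realise $X$ (or a convenient birational model, since by Step 1 one may work up to birational modification of the groups $H^i(\cO)$) as a hyperplane or complete-intersection section of a nice toric or abelian ambient space, reduce the slope-$0$ computation to the primitive cohomology of the section by weak Lefschetz, and then combine Bertini with a Fedder-type description of the Hasse--Witt operator to pick a member whose reduction is ordinary, checking that the choice can be made compatibly as $p$ varies. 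The persistent obstruction is uniform-in-$p$ control of slopes; I do not expect the Lefschetz reduction alone to close the gap, and suspect one needs new arithmetic input -- an equidistribution or large-sieve statement for Hasse--Witt matrices in the family, in the spirit of the elliptic-curve case.
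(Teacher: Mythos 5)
This statement is the Musta\c{t}\u{a}--Srinivas weak ordinarity conjecture. The paper does not prove it --- it is presented explicitly as a \emph{conjecture}, cited from \cite{MustataSrinivasOrdinary,MustataOrdinary2}, and the surrounding text only draws conditional consequences from it. So there is no proof in the paper against which to compare yours, and any honest attempt necessarily terminates in an open problem, which yours does.

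Your Steps 1--2 are a correct (and fairly standard) reduction. Translating Hasse--Witt bijectivity on $H^i(\cO)$ into the slope-$0$ part of $H^i_{\mathrm{cris}}$ touching its Hodge polygon, using Grothendieck's specialization theorem to get openness of the ``ordinary in degree $i$'' stratum, invoking the mod-$p^2$ lift and Deligne--Illusie plus Mazur for $p\gg0$ to make the Newton/Hodge comparison meaningful, and then observing that openness plus non-emptiness of the fibrewise ordinary locus over infinitely many $p$ yields Zariski density in $\Spec A$ --- all of this is sound. One small caution in Step 1: the degree-$i$ Hasse--Witt condition is equivalent to the vanishing of $H^j(X_{\overline p}, B\Omega^1)$ in the relevant range, as you say, but be careful not to slip into full Bloch--Kato--Illusie ordinarity (vanishing of all $H^j(X, B\Omega^r)$), which is a genuinely stronger hypothesis than the conjecture's and would change the target. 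You do not in fact make that slip, but the phrasing ``the degree-$i$ piece of Bloch--Kato--Illusie ordinarity'' invites confusion.

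The genuine gap is exactly where you locate it: there is no known mechanism forcing the ordinary locus to be nonempty for infinitely many primes in an arbitrary family, and that is the entire content of the conjecture. You are right that already for elliptic curves this is Serre's density-of-ordinary-primes result, for abelian varieties it needs Serre--Tate or $\ell$-adic monodromy, and in general even that much is unavailable. Your proposed Lefschetz/Bertini attack starting from toric or abelian ambient spaces is a reasonable heuristic, but you correctly flag that it does not control the ambient arithmetic input (equidistribution of Hasse--Witt matrices as $p$ varies), so it is not a proof. In short: your reduction is correct, your honesty about the obstruction is appropriate, and you have not proved the statement --- but neither has the paper, which treats it as conjectural throughout.
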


In \cite{TakagiAdjointIdealsAndACorrespondence}, it was shown that this conjecture implies that if $(X, \Delta)$ is log canonical and $(X_A, \Delta_A)$ is a family of characteristic $p > 0$ models of $(X, \Delta)$ as above, then $(X_p, \Delta_p)$ is $F$-pure for a Zariski dense set of $p \in A$.  Hence we are inspired to ask:

\begin{question}
Assume the weak ordinarity conjecture and suppose that $\{Z_i\}$ is the (finite) set of log canonical centers of a log canonical pair $(X, \Delta)$.  Is it true that for any $(X_A, \Delta_A) \to A$ and $\{ (Z_i)_A \} \to A$ as above, there exists a Zariski-dense set of closed points $p \in A$ such that $\{ (Z_i)_p \}$ is exactly the set of $F$-pure centers of $(X_p, \Delta_p)$?
\vskip 3pt
\emph{Comment:} It is easy to see that the $(Z_i)_p$ are $F$-pure centers (exercise), but it may be more difficult to see that this is all of them.
\end{question}

Finally, we mention that $F$-pure centers can be used to construct global sections.

\begin{exercise}
\label{ex.SurjectionOfS0ForCenters}
Formulate and prove a generalization of \autoref{ex.ExtendingSections}.  In particular, show that $S^0(X, \sigma(X, \Delta) \tensor \O_X(M))$ surjects onto $S^0(Z, \sigma(Z, \Delta_Z) \otimes \O_X(M))$ under appropriate hypotheses (here $Z$ will be an $F$-pure center of $(X, \Delta)$).
\end{exercise}

The question is then how to construct $F$-pure centers.  The common techniques involving sections of high multiplicity (\cf for example \cite[Chapter 10]{LazarsfeldPositivity2}) seem to not work since the $F$-pure threshold can have a $p$ in the denominator (this actually gets quite subtle, simple perturbation techniques seem not to work).  For some ways around this issue see the recent work of \cite{CasciniTanakaXuOnBPF}.

\subsection{Test ideals}

In this subsection we introduce test ideals, a characteristic $p > 0$ analog of multiplier ideals.

Suppose $X$ is integral and normal and $\Delta$ is a $\bQ$-divisor.  As before, we have observed that if the index of $K_X + \Delta$ is not divisible by $p$, then we obtain a map $\phi_{\Delta} : F^e_* \sL \to \O_X$.  In this setting we define:

\begin{definition}
With notation as above, we define the \emph{test ideal} $\tau(X, \Delta)$ to be the smallest non-zero ideal sheaf $J$ such that $\phi_{\Delta}(F^e_* (J \cdot \sL)) \subseteq J$.
\end{definition}

We give a definition in the local setting $X = \Spec R$ but when $K_X + \Delta$ has no $\bQ$-Cartier requirements.

\begin{definition}
\label{def.TauDefinition}
Assume $X = \Spec R$ is integral and normal and $\Delta\geq 0$ is a $\bR$-divisor.  Then we define the \emph{test ideal} $\tau(X, \Delta)$ to be the smallest non-zero ideal sheaf $J$ such that for all $e \geq 0$ and all $\phi \in \Hom_R(F^e_* \O_X(\lceil (p^e - 1) \Delta \rceil), \O_X)$ we have $\phi(F^e_* J) \subseteq J$.
\end{definition}

It is probably worth remarking that $\Hom_R(F^e_* \O_X(\lceil (p^e - 1) \Delta \rceil), \O_X)$ consists precisely of those $\phi'$ corresponding to divisors $\Delta'$ such that $(p^e - 1)(K_X + \Delta') \sim 0$ and such that $\Delta' \geq \Delta$.

Since in either definition, $\tau(X, \Delta)$ is the \emph{smallest} such ideal, it is not obvious that the test ideal exists.  To show it exists (locally), you first find a $c \in R$ such that for every $\ba \subseteq R$ an ideal, we have $c \in \phi(F^e_* \ba)$ for some $\phi \in \Hom_R(F^e_* \O_X(\lceil (p^e - 1) \Delta \rceil), \O_X)$.  This is tricky, and such elements are called \emph{test elements}.  However, once the $c$ is found constructing $\tau(X, \Delta)$ is simply
\[
\tau(X, \Delta) = \sum_{e \geq 0} \sum_{\phi} \phi(F^e_* (c \cdot \O_X))
\]
in the second case (which subsumes the first) where $\phi$ varies over $\Hom_R(F^e_* \O_X(\lceil (p^e - 1) \Delta \rceil), \O_X)$.  This is clearly the smallest ideal satisfying the condition of \autoref{def.TauDefinition} and containing $c$.  But since any ideal satisfying \autoref{def.TauDefinition} obviously contains $J$, we have constructed $\tau(X, \Delta)$.

\begin{exercise}
Show that the formation $\tau(X, \Delta)$ of commutes with localization and so our local definition glues to a global one.
\end{exercise}

\begin{remark}
It is natural to think of $\tau$ as a robust version of $\sigma$ in the same way that the augmented base locus is a robust version of the stable base locus.  In fact, there is a global version of $S^0$ which builds in a ``test element''.   Assume that $(X, \Delta)$ is a pair and that $(p^n-1)(K_X + \Delta)$ is Cartier (where $n$ is the smallest positive integer with that property), set $\sL_e = \O_X( (1-p^e)(K_X + \Delta))$ so that we have a map $\phi_{\Delta}^e : F^e_* \sL_e \to \O_X$ for each $e$ divisible by $n$.  For any Cartier divisor $M$ define $P^0(X, \tau(X, \Delta) \otimes \O_X(M))$ to be
\[
\bigcap_{0 \leq D \subseteq X} \bigcap_{e_{0}\geq 0}  \left( \sum_{e \geq e_0} \Tr_{F^e} \Big(H^0\big(X, \Big( F^e_* \O_X(\lceil K_X - p^e (K_X + \Delta) + p^e M  -  D \rceil)  \big)\Big)\Big)\right)
\]
where the intersection runs over all effective Weil divisors $\geq 0$ on $X$.
\begin{exercise}
For simplicity, assume that $(p-1)(K_X + \Delta)$ so that there are no roundings.  Then show that for $P^0 \subseteq S^0$ and further that the image of the maps used to define $P^0$ at the sheaf level is simply $\tau(X, \Delta) \otimes M$
\end{exercise}
\end{remark}

One of the main reasons people have been interested in test ideals is that if $(X_{\bC}, \Delta_{\bC})$ is a log $\bQ$-Gorenstein pair defined over $\bC$ and $X_A \to A$ is a family of characteristic $p$ models, then $\tau(X_p, \Delta_p) = \big(\mJ(X_{\bC}, \Delta_{\bC})\big)_{p}$ for all closed points $p$ in an open dense set of $A$.  In other words, the multiplier ideal reduces to the test ideal for $p \gg 0$ (see \cite{TakagiInterpretationOfMultiplierIdeals}).  Note that if the coefficients of $\Delta$ vary, then this is not true.  It is conjectured then that for $H$ an effective Cartier divisor then $\tau(X_p, \Delta_p+tH) = \big(\mJ(X_{\bC}, \Delta_{\bC})+tH\big)_{p}$ for all $t$ and for all $p$ is a Zariski dense set of points of $A$ but this is related to some very hard conjectures, see \cite{MustataSrinivasOrdinary,MustataOrdinary2,BhattSchwedeTakagiWeakOrdinarity} for discussion.

It would be natural to try to generalize the results on the behavior of test ideals to the non-$\bQ$-Gorenstein setting.

\begin{question}
Suppose that $(X_{\bC}, \Delta_{\bC})$ is a pair but that $K_{X_{\bC}} + \Delta_{\bC}$ is not assumed to be $\bQ$-Cartier.  Is it true that $\tau(X_p, \Delta_p) = \big(\mJ(X_{\bC}, \Delta_{\bC})\big)_{p}$ for all closed points $p$ in an open and Zariski-dense set of $A$ (where $X_A \to A$ is as above)?
\end{question}

\subsection{Jumping numbers}

Suppose that $(X, \Delta)$ is as above and that $H$ is a Cartier divisor.  Consider the behavior of $\tau(X, \Delta+tH)$ as $t \in \bR_{\geq 0}$ varies.  Since we have asserted that test ideals are characteristic $p > 0$ analogs of multiplier ideals, one should expect that $\tau(X, \Delta+tH)$ jumps (as $t$ varies) at a set of rational $t \in \bQ_{\geq 0}$ without limit points, at least assuming that $K_X + \Delta$ is Cartier.  Indeed, this is the case (in the log-$\bQ$-Gorenstein setting).

First an easy exercise (you can assume the existence of test elements):
\begin{exercise}
If $t \geq t'$ show that $\tau(X, \Delta + tH) \subseteq \tau(X, \Delta + t'H)$.
\end{exercise}

\begin{definition}
An \emph{$F$-jumping number of $(X, \Delta)$ with respect to $H$} is a real number $t \geq 0$ such that $\tau(X, \Delta + tH) \neq \tau(X, \Delta+(t-\varepsilon) H)$ for any $\varepsilon > 0$.
\end{definition}

First you shall prove that this notion is sensible.

\begin{exercise}
Suppose that $t \geq 0$ is a rational number.  Prove that there exists an $\varepsilon > 0$ such that $\tau(X, \Delta+(t+\varepsilon)H) = \tau(X, \Delta + tH)$.
\vskip 3pt
\Hint{Absorb the bigger $t$ into the test element somehow and use the fact that the test ideal can be written as a finite sum.}
\end{exercise}

\begin{question}
Assume now that $(X, \Delta)$ is a pair and that $K_X + \Delta$ is not necessarily $\bQ$-Cartier.
Does the set of $F$-jumping numbers of $(X, \Delta)$ with respect to $H$ have any limit points?  Are the $F$-jumping numbers rational?
\end{question}

One would assume that the $F$-jumping numbers are not necessarily rational based upon the situation for the multiplier ideal \cite{UrbinatiDiscrepanciesOfNonQGorensteinVars}.  However, one might hope that the $F$-jumping numbers do not have limit points (as is also hoped in characteristic zero).  For some partial progress, see \cite{TakagiTakahashiDModulesOverRingsWithFFRT,BlickleTestIdealsViaAlgebras,KatzmanSchwedeSinghZhang}.  A first target might be to try to consider the case of an isolated singularity with $\Delta = 0$.  Some relevant methods might be found for instance in \cite{LyubeznikSmithCommutationOfTestIdealWithLocalization}.

\subsection{Test ideals of non-$\bQ$-Gorenstein rings}
\label{subsec.TestIdealsInNonQGor}

\begin{question}
Suppose that $(X, \Delta)$ is a pair and that $K_X + \Delta$ is not assumed to be $\bQ$-Gorenstein.  Then does there exist a $\Delta' \geq \Delta$ such that $K_X + \Delta'$ is $\bQ$-Cartier (with index not divisible by $p$ hopefully) such that $\tau(X, \Delta) = \tau(X, \Delta')$.
\end{question}

The answer to this is known to be yes if $\tau(X, \Delta)= \O_X$ at least in the affine setting, see \cite{Schwede_Smith_globally_F_regular_an_log_Fano_varieties,SchwedeTestIdealsInNonQGor}.  More generally, it is also known that there exist finitely many $\Delta_i \geq \Delta$ such that $\tau(X, \Delta) = \sum_i \tau(X, \Delta_i)$ and such that each $(K_X + \Delta_i)$ is $\bQ$-Cartier with index not divisible by $p > 0$, see \cite{SchwedeTestIdealsInNonQGor}.  As a general strategy on this problem, one would hope that some general choice of $\Delta'$ would work but more work is required.  In characteristic zero the analog is true if one defines $\mJ(X, \Delta)$ is the non-log-$\bQ$-Gorenstein setting as in \cite{DeFernexHaconSingsOnNormal}.

\begin{exercise}
Suppose that $R$ is a local ring and that $X = \Spec R$.  Suppose that we know that $\tau(X, \Delta) = \sum_i \tau(X, \Delta_i)$ with $K_X + \Delta_i$ $\bQ$-Cartier as before and also that $\tau(X, \Delta) = \O_X$.  Prove that $\tau(X, \Delta_i) = \tau(X, \Delta)$ for some $i$.
\end{exercise}

Here $\mJ(X_{\bC}, \Delta_{\bC})$ is defined as in \cite{DeFernexHaconSingsOnNormal}.  Some recent work on this question was done in \cite{DeFernexDocampoTakagiTuckerComparingMultTest}.  It is easily seen to be true in the toric case \cite{BlickleMultiplierIdealsAndModulesOnToric}.  Perhaps isolated or even isolated graded singularities would be a natural place to start (see \cite{LyubeznikSmithStrongWeakFregularityEquivalentforGraded} for some ideas using commutative algebra language).

\begin{exercise}
Show that the containment $\big(\mJ(X_{\bC}, \Delta_{\bC})\big)_{p} \subseteq \tau(X_p, \Delta_p)$ holds.  \vskip 3pt
\Hint{Use the fact that we know it holds in the $\bQ$-Gorenstein setting.}
\end{exercise}







\section{Numerical Invariants}

\newcommand{\Fpt}{\mathrm{fpt}}
\begin{definition}
  Assume $X = \Spec R$ is integral and normal and $\Delta \geq 0$ is
  an $\R$-divisor.  Suppose that $(X, \Delta)$ is sharply $F$-pure and that $H$ is a
  Cartier divisor.  The $F$-pure threshold of $(X,\Delta)$ along $H$
  is
$
\Fpt((X,\Delta),H) = \sup \{ t \in \R  \; | \; (X, \Delta + tH) \mbox{ is sharply $F$-pure}\}.
$
When $\Delta = 0$, then we write $\Fpt(X, H)$ instead of $\Fpt( (X, 0), H)$.
\end{definition}

Of course, one should compare the definition about with that of the
Log Canonical Threshold (LCT) in characteristic zero (which simply replaces
sharply $F$-pure with Log Canonical throughout).

\begin{exercise}
  When $(X, \Delta)$ is strongly $F$-regular, show that
  $\Fpt((X,\Delta),H)$ is the smallest jumping number of $(X,\Delta)$
  with respect to $H$.
\end{exercise}

It is known that if $K_X + \Delta$ is $\bQ$-Cartier, then $\Fpt((X, \Delta), H) \in \bQ$ (indeed all jumping numbers are rational).  But in general, it is an open question, compare with \cite{UrbinatiDiscrepanciesOfNonQGorensteinVars}.

\begin{problem}
  Find $(X, \Delta)$ and $H$ so that $\Fpt((X,\Delta),H) \not\in
  \Q$ (or show this can't happen).
\end{problem}

It follows easily from \autoref{exc:SFP_implies_lc} that if $K_X + \Delta$ is $\bQ$-Cartier and $(X, \Delta)$ is sharply $F$-pure, then $(X, \Delta)$ is log canonical.  Hence in a fixed characteristic $\Fpt( (X, \Delta), H) \geq \lct( (X, \Delta), H)$.  As we vary the characteristic however we do get some more subtle behavior.

\begin{exercise}
Working in characteristic $p > 0$, let $X=\bA^2$ and $H = V(y^2-x^3)$.  Show that the following formula for $\fpt(X, H)$ holds.
\[
\fpt(X, H) = \left\{ \begin{array}{cl} {1 \over 2} & \text{if $p = 2$} \\ \\{2 \over 3} & \text{if $p = 3$} \\ \\{5 \over 6} & \text{if $p \equiv 1 \mod 6$} \\ \\ {5 \over 6} - {1 \over 6p} & \text{ if $p \equiv 5 \mod 6$} \end{array} \right.
\]
\end{exercise}

\begin{exercise}
If $X = \bA^2$ and $H = V(xy(x+y))$.  Find the formula for $\fpt(X,H)$ in terms of the characteristic.
\end{exercise}

\begin{theorem}
\label{thm.FPTLimitsToLCT}
Suppose $(X_{\bC}, \Delta_{\bC})$ is KLT and $H_{\bC}$ is a Cartier divisor all defined over $\bC$. Take a model $\sX \to \Spec(A)$
  of $X$ (together with $\Delta$ and $H$) over a
  finitely generated $\Z$-algebra domain $A$ (so that $\sX_{0}
  \otimes_{\Frac(A)} \bC = X_{\bC}$).  Then for every $\varepsilon > 0$, there exists a dense open set $U \subseteq \Spec A$, so that $$\lct((X_{\bC}, \Delta_{\bC}), H_{\bC}) - \varepsilon < \Fpt( (X_{\bm}, \Delta_{\bm}), H_{\bm})$$ for all maximal ideals $\bm \in \Spec A$.  Informally this says that
  \[
  \lim_{p \to \infty} \Fpt ( (X_p, \Delta_p), H_p) = \lct((X_{\bC}, \Delta_{\bc}), H_{\bC}).
  \]
\end{theorem}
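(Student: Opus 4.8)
The plan is to deduce the theorem from \autoref{thm:klt_SFR_reduction}, applied not to $(X_{\bC}, \Delta_{\bC})$ directly but to the klt pair obtained after adding a small rational multiple of $H_{\bC}$. Write $c := \lct((X_{\bC}, \Delta_{\bC}), H_{\bC})$; I may assume $H_{\bC}$ is a nonzero effective Cartier divisor, the other cases being trivial or vacuous, so that $c$ is a finite positive (rational) number. Fix $\varepsilon > 0$. Since $\bQ$ is dense in $\bR$, first I would choose a rational number $t$ with $\max\{0, c - \varepsilon\} < t < c$. Because $t < c = \lct((X_{\bC},\Delta_{\bC}), H_{\bC})$ and $K_{X_{\bC}} + \Delta_{\bC}$ is $\bQ$-Cartier (as $(X_{\bC},\Delta_{\bC})$ is klt) while $H_{\bC}$ is Cartier, the pair $(X_{\bC}, \Delta_{\bC} + tH_{\bC})$ is klt and $K_{X_{\bC}} + \Delta_{\bC} + tH_{\bC}$ is $\bQ$-Cartier.

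Next I would observe that the data $\sX \to \Spec A$, together with the given spreadings-out of $\Delta_{\bC}$ and $H_{\bC}$, provides a model $(X_A, \Delta_A + tH_A) \to \Spec A$ of the klt pair $(X_{\bC}, \Delta_{\bC} + tH_{\bC})$ over the same ring $A$ (here $tH_A$ is a well-defined $\bQ$-Weil divisor since $H_A$ is Cartier). Applying \autoref{thm:klt_SFR_reduction} to this model, the set $\{\bm \in \Spec A \mid (X_{\bm}, \Delta_{\bm} + tH_{\bm}) \text{ is strongly } F\text{-regular}\}$ is open and dense; denote it $U = U(\varepsilon)$. For every maximal ideal $\bm \in U$ the pair $(X_{\bm}, \Delta_{\bm} + tH_{\bm})$ is strongly $F$-regular, hence sharply $F$-pure; since strong $F$-regularity is preserved under shrinking the boundary and $tH_{\bm} \geq 0$, also $(X_{\bm}, \Delta_{\bm})$ is strongly $F$-regular, so $\Fpt((X_{\bm}, \Delta_{\bm}), H_{\bm})$ is defined and $t$ lies in the set of parameters $s$ with $(X_{\bm}, \Delta_{\bm} + sH_{\bm})$ sharply $F$-pure. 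Therefore $\Fpt((X_{\bm}, \Delta_{\bm}), H_{\bm}) \geq t > c - \varepsilon = \lct((X_{\bC},\Delta_{\bC}), H_{\bC}) - \varepsilon$, which is exactly the asserted inequality with dense open set $U$.

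To obtain the informal limiting statement I would also supply the matching upper bound for $p \gg 0$. Given $\delta > 0$, pick a rational $t' \in (c, c + \delta)$; then $(X_{\bC}, \Delta_{\bC} + t'H_{\bC})$ is not log canonical, so spreading out a fixed log resolution of this pair and using that the relevant discrepancies are unchanged over a dense open subset of $\Spec A$, the pair $(X_{\bm}, \Delta_{\bm} + t'H_{\bm})$ is not log canonical for all $\bm$ in a dense open set, in particular for all $\bm$ of residue characteristic $p \gg 0$. By \autoref{exc:SFP_implies_lc} a sharply $F$-pure pair with $\bQ$-Cartier log canonical class is log canonical, so $(X_{\bm}, \Delta_{\bm} + t'H_{\bm})$ is not sharply $F$-pure, whence $\Fpt((X_{\bm}, \Delta_{\bm}), H_{\bm}) \leq t' < c + \delta$ for such $\bm$. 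Combining this with the previous paragraph and letting $\varepsilon, \delta \to 0$ gives $\lim_{p \to \infty} \Fpt((X_p, \Delta_p), H_p) = c$.

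The one genuinely substantial input is \autoref{thm:klt_SFR_reduction}; granting it, everything else is routine bookkeeping (stability of klt-ness and $\bQ$-Cartierness under adding a small rational Cartier perturbation, the implication strongly $F$-regular $\Rightarrow$ sharply $F$-pure and its anti-monotonicity in the boundary, and the fact that a single rational $t = t(\varepsilon)$ suffices so that the good open set can be chosen uniformly once $\varepsilon$ is fixed). Accordingly, I expect the main obstacle to be precisely the content of \autoref{thm:klt_SFR_reduction} — the comparison of test ideals with multiplier ideals via local cohomology and the Grothendieck trace of Frobenius — which one would have to reprove for the pair $(X_{\bC}, \Delta_{\bC} + tH_{\bC})$ if it were not available off the shelf.
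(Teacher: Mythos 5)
Your proof is correct and follows exactly the route the paper intends: the ``proof'' in the text is the exercise instructing the reader to deduce the theorem from \autoref{thm:klt_SFR_reduction}, and you do precisely that by applying it to the klt pair $(X_{\bC}, \Delta_{\bC} + tH_{\bC})$ for a rational $t$ just below the log canonical threshold. Your additional paragraph supplying the matching upper bound via \autoref{exc:SFP_implies_lc} (so that the informal $\lim_{p\to\infty}\Fpt = \lct$ is actually a two-sided statement) is a welcome completion of what the paper leaves implicit.
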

\begin{exercise}
Use \autoref{thm:klt_SFR_reduction} to prove \autoref{thm.FPTLimitsToLCT}.
\end{exercise}

While we do have that limit, in many examples it appears the two invariants frequently agree on the nose, in other words that $\Fpt( (X_{\bm}, \Delta_{\bm}), H_{\bm}) = \lct( (X_{\bC}, \Delta_{\bC}), H_{\bC})$ for a Zariski dense set of closed points $\bm \in \Spec A$.  Hence we have the following very important conjecture.

\begin{conjecture}
  Suppose that $X_{\bC}$ is a normal affine algebraic variety over $\bC$, and
  $\Delta \geq 0$ is an $\R$-divisor, and $H$ is a Cartier divisor.  Take a model $\sX \to \Spec(A)$
  of $X$ (together with $\Delta$ and $H$) over a
  finitely generated $\Z$-algebra domain $A$ (so that $\sX_{0}
  \otimes_{\Frac(A)} \bC = X_{\bC}$).  Then for a dense set of maximal
  ideals $\bm \in \Spec A$, we have that $\Fpt((\sX_{\bm},
  \Delta_{\bm}),H_{\bm})$ equals $\lct((X_{\bC},\Delta), H)$.
\end{conjecture}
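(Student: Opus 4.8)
The plan is to deduce the conjecture from the weak ordinarity conjecture stated just above, using the machinery already assembled in these notes. Write $t_0 := \lct((X_{\bC},\Delta_{\bC}),H_{\bC})$. I may assume $K_{X_{\bC}}+\Delta_{\bC}$ is $\bQ$-Cartier (otherwise the right-hand side is not literally defined in the classical sense), and since the log canonical threshold of a $\bQ$-Gorenstein pair over $\bC$ is rational, $t_0\in\bQ$; as $H$ is Cartier, $K_{X_{\bC}}+\Delta_{\bC}+t_0H_{\bC}$ is again $\bQ$-Cartier, and by the characteristic-zero theory the threshold is attained, so $(X_{\bC},\Delta_{\bC}+t_0H_{\bC})$ is log canonical. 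After shrinking $\Spec A$ I arrange that $(\sX_{\bm},\Delta_{\bm}+t_0H_{\bm})$ is a model of this pair for every closed point $\bm$, and then apply Takagi's theorem \cite{TakagiAdjointIdealsAndACorrespondence} — precisely the consequence of the weak ordinarity conjecture recalled above: there is a Zariski-dense set $S\subseteq\Spec A$ of closed points for which $(\sX_{\bm},\Delta_{\bm}+t_0H_{\bm})$ is sharply $F$-pure. For $\bm\in S$ this already gives $\Fpt((\sX_{\bm},\Delta_{\bm}),H_{\bm})\geq t_0$.

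For the opposite inequality I would spread out a single characteristic-zero log resolution rather than invoke resolution of singularities in characteristic $p$. Fix a log resolution $g:Y_{\bC}\to X_{\bC}$ of $(X_{\bC},\Delta_{\bC}+H_{\bC})$ computing discrepancies. After further shrinking $\Spec A$, the reduction $g_{\bm}:Y_{\bm}\to X_{\bm}$ remains a proper birational morphism from a smooth variety with simple normal crossing exceptional locus, and the coefficients of the $\bQ$-divisor $K_{Y}-g^{*}(K_{X}+\Delta+tH)$ are unchanged, so $\lct((\sX_{\bm},\Delta_{\bm}),H_{\bm})=t_0$ for every closed point $\bm$ in this dense open set $U$. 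Then \autoref{exc:SFP_implies_lc} shows that if $(\sX_{\bm},\Delta_{\bm}+tH_{\bm})$ is sharply $F$-pure for some rational $t$ it is log canonical, hence $t\leq t_0$; combined with monotonicity of sharp $F$-purity in the coefficient this forces $\Fpt((\sX_{\bm},\Delta_{\bm}),H_{\bm})\leq t_0$ for all $\bm\in U$. Since $A$ is a domain, $\Spec A$ is irreducible, so $S\cap U$ is still a Zariski-dense set of closed points, and on it $\Fpt((\sX_{\bm},\Delta_{\bm}),H_{\bm})=t_0=\lct((X_{\bC},\Delta),H)$, as desired.

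The main obstacle is the weak ordinarity conjecture itself, which is the only non-formal ingredient and is wide open; it is tied to delicate statements about the Frobenius action on the cohomology of reductions and to the work of Musta\c{t}\u{a}--Srinivas \cite{MustataSrinivasOrdinary,MustataOrdinary2}. Without it one must construct sharply $F$-pure reductions of $(X_{\bC},\Delta_{\bC}+t_0H_{\bC})$ directly along a positive-density set of primes, and this is where all the difficulty lies: \autoref{thm.FPTLimitsToLCT} already yields $\Fpt((\sX_{\bm},\Delta_{\bm}),H_{\bm})>t_0-\varepsilon$ for $\bm$ in a dense open set depending on $\varepsilon$, and the discrepancy-spreading argument of the previous paragraph yields $\Fpt((\sX_{\bm},\Delta_{\bm}),H_{\bm})\leq t_0$ on a dense open set, so the limit as the residue characteristic grows is already $t_0$ unconditionally; what is missing is equality for a Zariski-dense (rather than merely asymptotically full) set of primes. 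As a sanity check and a source of unconditional evidence I would first revisit the cases where $\Fpt$ is computed by explicit combinatorics — monomial ideals, binomial plane curves such as $y^{2}=x^{3}$ and $xy(x+y)$, and diagonal hypersurfaces — where the equality is visibly governed by congruence conditions on $p$ and the dense set is transparent, and I would look for a weaker, purely characteristic-$p$ substitute for weak ordinarity (for instance a stabilization statement for the relevant Cartier algebras or $\sigma$-modules under reduction) that might already force equality on a dense set.
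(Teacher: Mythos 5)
This statement is labeled a \emph{conjecture} in the paper and is given no proof there; it is genuinely open, so there is no argument of the authors' to compare yours against. What you have written is therefore not a proof of the conjecture but a conditional derivation of it from the weak ordinarity conjecture, and you say so explicitly. That conditional derivation is correct and is indeed the standard route: the lower bound $\Fpt\geq t_0$ comes from applying Takagi's theorem (weak ordinarity $\Rightarrow$ dense sharp $F$-purity of reductions of log canonical pairs) to $(X_{\bC},\Delta_{\bC}+t_0H_{\bC})$, and the upper bound $\Fpt\leq t_0$ on a dense open set comes from spreading out a characteristic-zero log resolution and invoking \autoref{exc:SFP_implies_lc}; irreducibility of $\Spec A$ then lets you intersect the two loci. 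Two small caveats worth recording. First, \autoref{exc:SFP_implies_lc} requires $K_X+\Delta+tH$ to be $\bQ$-Cartier, which is fine under your simplifying assumption that $K_{X_{\bC}}+\Delta_{\bC}$ is $\bQ$-Cartier, but the conjecture as stated allows an arbitrary $\bR$-divisor $\Delta$ with no $\bQ$- or $\bR$-Cartier hypothesis on $K_X+\Delta$, and in that generality both the meaning of $\lct$ and the ``sharply $F$-pure $\Rightarrow$ log canonical'' implication need the de Fernex--Hacon style of definitions; you should flag that your argument only handles the $\bQ$-Gorenstein case. Second, the weak ordinarity conjecture is recorded in the paper for smooth projective $X$, whereas here $X$ is normal and affine; passing from the smooth projective statement to the version needed for Takagi's implication involves an additional (known but nontrivial) reduction, and you are implicitly using the strengthened form. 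Your closing remarks — that \autoref{thm.FPTLimitsToLCT} already gives the limit statement unconditionally, and that the gap is precisely exact equality on a dense set of primes — are an accurate summary of the state of the problem.
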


\begin{exercise}
  If $(X,\Delta)$ is strongly $F$-regular and $H$ is a Cartier divisor
  so that $c = \Fpt((X,\Delta),H) \in \Q$ so that $K_{X}+ \Delta +cH$
  has index relatively prime to $p$, show that $(X, \Delta + cH)$ is
  sharply $F$-pure and $\tau(X, \Delta + cH)$
  is a radical ideal.
\end{exercise}

With notation as above, if $K_X + \Delta + cH$ has index divisible by $p$, then it can happen that $\tau(X, \Delta + cH)$ is not radical \cite{MustataYoshidaTestIdealVsMultiplierIdeals}.

\begin{question}
  Does the FPT satisfy analogous ACC properties to the LCT?
\end{question}

\begin{exercise}
  Suppose $X$ is normal and $K_{X}$ is Cartier.   Let $H$ be a Cartier
  divisor.  If $\xi$ is a jumping number of the test ideals $\tau(X, t
  H)$ for $t \in \R_{\geq 0}$, show also that $p \xi$ is a jumping
  number.
  \Hint{consider the image of $\tau(X, ptH)$ under the trace
  map.  Also, using the ideas of the following section, do this again and show that
  $p^{e}\xi$ is a jumping number of the ideals $\tau(X, \Delta + tH)$
  whenever $(p^{e}-1)(K_{X}+\Delta)$ is Cartier.}
\end{exercise}

\begin{definition}
  Suppose that $X = \Spec R$ where $R = S / I$ is a quotient of a
  polynomial ring $S = k[x_{1}, \ldots, x_{n}]$.  Let $R_{d}$ be the
  image the polynomials of degree less than or equal to $d$ in $S$.
  Define a function $\delta : R \to \Z_{\geq 0} \cup \{ - \infty \}$
  by $\delta(f) = d$ whenever $f \in R_{d} \setminus R_{d-1}$.  We say
  that $\delta$ is a gauge for $R$.  It is easy to see that the
  following properties hold.
  \begin{enumerate}
  \item $\delta(f) = - \infty$ if and only if $f = 0$.
  \item Each $R_{d}$ is a finite dimensional $k$ vector space.
    \item $\bigcup_{d}R_{d} = R$.
      \item $\delta(f + g) \leq \max\{ \delta(f), \delta(g)\}$.
\item $\delta(fg) \leq \delta(f) + \delta(g)$.
  \end{enumerate}
  \end{definition}

In the exercises below, we will make use of the notation from the definition.

  \begin{exercise}
    Suppose that $\phi : F^{e}_{*}R \to R$ is an $R$-linear map.  Show
    that there exists a constant $K$ so that
\[
\delta(\phi(F^{e}_{*}f)) \leq \delta(f) / p^{e} + K / p^{e}
\]
for all $f \in R$.  If $\phi$ corresponds to a divisor $\Delta$ on $X
= \Spec R$, show how to use this to get a bound on the degrees of
generators for the test ideals $\tau(X, \Delta + t\Div(g))$ when $g
\in R$ in terms of $\delta(g)$.  See the survey \cite{BlickleSchwedeSurveyPMinusE}
for more details.
  \end{exercise}

  \begin{exercise}
  Use the previous exercise to conclude that the set of $F$-jumping numbers of $\tau(X, \Delta + t \Div(g))$ are a discrete set of rational numbers.
  \end{exercise}

\section{More on test ideals and $F$-Singularities in Families}

\subsection{Bertini theorems for test ideals}

In \autoref{subsec.TestIdealsInNonQGor} we thought about choosing general boundaries $\Delta_i$.  The choice of a general $\Delta_i$ also seems related to the following other problem.

\begin{question}[Bertini's theorem for test ideals]
Suppose that $X$ is a quasi-projective variety and that $(X, \Delta)$ is a pair with $K_X + \Delta$ even $\bQ$-Cartier with index not divisible by $p > 0$.  Suppose that $H$ is a general hyperplane section of $X$.  Is it true that $\tau(X, \Delta) \cdot \O_H = \tau(H, \Delta|_H)$?
\end{question}

Note that the containment $\supseteq$ was shown in \cite[Proposition 2.12(1)]{TakagiAdjointIdealsAndACorrespondence} under some assumptions.
The other direction seems to be hard.  The main approach seems to be to show the following.

 \begin{question}
 \label{quest.A1ForTau}
 If $f : U \to V$ is a flat family with regular (but not necessarily smooth) fibers then is $\tau(V, \Delta) \cdot \O_U \subseteq \tau(U, f^* \Delta)$?  This is known in the case that $\tau(V, \Delta) = \O_V$, in other words where $(V, \Delta)$ is strongly $F$-regular, see \cite{SchwedeZhangBertiniTheoremsForFSings} and \cite[Theorem 7.3]{HochsterHunekeFRegularityTestElementsBaseChange}.  It would be an interesting problem to study on its own.
 \end{question}

We need one other property.

\begin{question}
\label{quest.A2ForTau}
suppose $\phi : Y \to S$ is a morphism of finite type between schemes essentially of finite type over a field $k = \overline{k}$ and that $\Delta \geq 0$ is a $\bQ$-divisor on $Y$ such that $K_Y + \Delta$ is $\bQ$-Cartier with index not divisible by $p$.  If $\overline{g} \in \O_{Y_s}$ is such that $\overline{g} \in \tau(Y_s, \Delta_s)$ for some geometric point $s \in S$, is it true that $\Image(g) \in \tau(Y_{t}, \Delta_t)$ for some $g \in \O_{Y \times_S T}$ (some base change) restricting to $\overline{g}$ and all (geometric) $t$ in a neighborhood of $s$?
\end{question}

Perhaps something like this result can be obtained using the method of \cite{Patakfalvi_Schwede_Zhang_F_singularities_in_families}.  We sketch the main idea for how to use this to get Bertini's theorem.
Suppose that solutions to both these interim problems were positive.  Now consider the following general strategy taken from \cite{CuminoGrecoManaresiAxiomatic}.

Let $Z$ be the reduced closed subscheme of $\bP^n_k \times_k (\bP^n_k)^{\vee}$ obtained by taking the closure of the set
\[
\big\{(x,H) \in \,\big|\, x \in H \big\}.
\]
We claim that the projection map $\beta : Z \to \bP^n_k$ is flat.  Indeed, it is clearly generically flat, and since it is finite type it is flat at some closed point.  But for any $z \in Z$, $\O_{Z, z}$ is certainly isomorphic to that of any other $z$.  Furthermore, the inclusion of rings $\O_{\bP^n_k, \beta(z)} \to \O_{Z, z}$ is also independent of the choice of $z$, up to isomorphism.  Thus $\beta$ is flat in general.

We form a commutative diagram:
\[
\xymatrix{
Y := X \times_{\bP^n_k} Z \ar[dd]_{\gamma} \ar[dr]_{\rho} \ar[rr]^{\sigma} & & \ar[dl]_{\pi} Z \ar@{^{(}->}[d] \\
& (\bP^n_k)^{\vee} & \bP^n_k \times_k (\bP^n_k)^{\vee} \ar[d] \ar[l]^-{\pi'} \\
X \ar[rr]_{\phi} & & \bP^n_k
}
\]
We describe each map appearing above.
\begin{itemize}
\item{} $\sigma$ is the projection.
\item{} $\pi'$ is the projection and thus so is $\pi$.
\item{} $\rho = \pi \circ \sigma$.
\item{} $\gamma$ is the projection.  Note that $\gamma$ is flat since it is a base change of the projection $Z \to \bP^n_k$.
\end{itemize}

\begin{exercise}
Assuming what you need to from above, show how the above diagram can be used to prove Bertini's theorem for test ideals.
\vskip 3pt
\Hint{The fibers of $\rho$ are hyperplane sections of $X$, and so one would like to show that the geometric fiber over the generic point of $(\bP^n_k)^{\vee}$ behaves in a way controllable by \autoref{quest.A2ForTau}.  One should try to use \autoref{quest.A1ForTau} to prove this, for more details see \cite{CuminoGrecoManaresiBertiniAndWeakNormality}.}
\end{exercise}

\subsection{Test ideals by finite covers and alterations}

Suppose that $(X, \Delta)$ is a pair such that $K_X + \Delta$ is $\bQ$-Cartier (of any index).  We have the following theorem

\begin{theorem}\cite{BlickleSchwedeTuckerTestAlterations} \label{thm.TestAlterations}
There exists a finite surjective separable map from a normal variety $f : Y \to X$ such that the trace map $\Tr_{Y/X} : F_* \O_Y \to \O_X$ sends $\Tr_{Y/X}( f_* \O_Y( \lceil K_Y - \pi^* (K_X + \Delta) \rceil)) = \tau(X, \Delta)$.
\end{theorem}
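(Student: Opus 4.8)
The plan is to reduce everything to an affine local question, use the structural description of the test ideal as a finite sum of images of Cartier-linear maps applied to a test element, realize those Cartier-linear maps as pieces of the Grothendieck trace of a finite separable cover, and finally amalgamate the covers. This mirrors the characteristic-zero situation, where a multiplier ideal is computed by pushing forward from a log resolution; the point is that test ideals, unlike multiplier ideals, transform well under finite covers, so one can trade the (generally impossible in positive characteristic) log resolution for a finite separable morphism.

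First I would pass to an affine chart $X = \Spec R$ with $R$ a normal domain: this is harmless since $\tau(X,\Delta)$ commutes with localization (an exercise above) and the conclusion --- equality of an ideal sheaf with the image of a trace map --- is local on $X$; the covers produced on the charts can be glued, or one runs the whole argument with a fixed sufficiently ample twisting line bundle in place of $\O_X$. Fix a test element $c \in R$. Then $\tau(X,\Delta) = \sum_{e\geq 0}\sum_{\phi}\phi(F^e_*(c\cdot\O_X))$, the inner sum over $\phi \in \Hom_R(F^e_*\O_X(\lceil(p^e-1)\Delta\rceil),\O_X)$; since $R$ is Noetherian and $\tau$ is finitely generated, finitely many terms suffice, so $\tau(X,\Delta) = \sum_{i=1}^{s}\phi_i(F^{e_i}_*(c\cdot\O_X))$ for certain $\phi_i$. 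When the index of $K_X+\Delta$ is prime to $p$ this streamlines: by the map--divisor dictionary (Proposition above) and the exercise that $\phi_{ne_0,\Delta}$ is the $n$-fold composition of $\phi_{e_0,\Delta}$ --- with $e_0$ the order of $p$ modulo the index --- the terms at levels divisible by $e_0$ form an increasing chain (using that $c$ may be chosen with $c \in \phi_{e_0,\Delta}(F^{e_0}_*(c\cdot\O_X))$), which stabilizes, so in fact $\tau(X,\Delta) = \phi_{E,\Delta}(F^E_*(c\cdot\O_X))$ for a single $E = Ne_0 \gg 0$: the image of one $\O_X$-linear map $\phi_{E,\Delta}\colon F^E_*\O_X(\lceil(1-p^E)(K_X+\Delta)\rceil)\to\O_X$ restricted to $F^E_*(c\cdot\O_X)$.

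The heart of the proof is to replace the purely inseparable Frobenius cover $F^E\colon X\to X$ (and the covers implicit in the $\phi_i$) by an honest finite surjective separable $f\colon Y\to X$ with $Y$ normal, for which the Grothendieck trace $\Tr_{Y/X}\colon f_*\omega_Y\to\omega_X$ --- equivalently $f_*\O_Y(K_{Y/X})\to\O_X$ after untwisting by $K_X$ --- satisfies $\Tr_{Y/X}\big(f_*\O_Y(\lceil K_Y - f^*(K_X+\Delta)\rceil)\big) = \tau(X,\Delta)$. I would build $Y$ by adjoining, in a tame way, roots along the components of $(p^E-1)(K_X+\Delta)$ and of $\divisor(c)$, ramified to orders chosen both to absorb the fractional coefficients of $\Delta$ (so that $\Delta_Y$, defined by $K_Y+\Delta_Y=f^*(K_X+\Delta)$ via adjunction, becomes anti-effective and integral) and to make $(Y,\Delta_Y)$ as $F$-regular as possible; the degrees are prime to $p$, so $f$ is generically \'etale, hence separable, and $Y$ is normalized if necessary (the normalization of an integral cover is finite birational, hence separable over it). A transitivity-of-trace computation then matches $\Tr_{Y/X}$ on the summand $f_*\O_Y(\lceil K_Y - f^*(K_X+\Delta)\rceil)$ with the maps $\phi_i$ (resp.\ with $\phi_{E,\Delta}$): each side is a canonical generator of a rank-one reflexive $\sHom$-module, identified via Grothendieck duality for finite morphisms, so they agree up to a unit, and tracking $c$ through the identification pins down the images. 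Finally $c$ is removed by passing to a further finite separable cover branched along $\divisor(c)$: this can only enlarge the trace-image, which by the $\phi$-compatibility of $\tau(X,\Delta)$ stays inside $\tau(X,\Delta)$, forcing equality.

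The main obstacle is exactly this realization of an arbitrary Cartier-linear $\phi$ --- and ultimately the whole test ideal --- as the trace of a \emph{finite separable} cover: a naive degree-$(p^E-1)$ cyclic cover branched along $(p^E-1)(K_X+\Delta)$ does not by itself produce the right image, since the ramification and the round-downs must be tuned, and on a singular $X$ one also needs the cover to improve the singularities enough that the trace surjects onto $\tau$ rather than a smaller ideal, all while keeping ramification tame; carrying this out demands careful local models (Grothendieck duality plus explicit computation of the different of a tame cover) and a global gluing/normalization argument. A second genuine difficulty is the case in which the index of $K_X+\Delta$ is divisible by $p$, where $\phi_{e,\Delta}$ need not exist and the reduction to a single Frobenius-twisted trace breaks down; there I would argue instead that the trace-images $\Tr_{Y/X}\big(f_*\O_Y(\lceil K_Y-f^*(K_X+\Delta)\rceil)\big)$, as $f$ ranges over finite separable covers, are contained in $\tau(X,\Delta)$ and directed with union $\tau(X,\Delta)$, so that --- $R$ being Noetherian --- a single cover attains the maximum; both the containment and the cofinality then have to be established by hand, the latter amounting to producing, for each generator of $\tau$, a cover ramified enough to see it.
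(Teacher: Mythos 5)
Your proposal correctly identifies the scaffolding---reducing to the affine case, the formula $\tau(X,\Delta) = \sum_{e,\phi}\phi(F^e_*(c\cdot\O_X))$, the transformation rule for test ideals under finite covers, and the reduction to the case $\omega_R \cong R$, $\Delta = 0$ via a cover making $(p^E-1)(K_X+\Delta)$ Cartier---but the central step is missing and what you substitute for it is not a proof.

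The engine of the actual argument is the Huneke--Lyubeznik result (generalized by Sannai--Singh): given $z \in H^i_{\bm}(R)$ whose Frobenius closure $\{z, z^p, z^{p^2}, \ldots\}$ spans a finitely generated submodule, there is a \emph{finite separable} extension $R \subseteq S$ killing $z$ in $H^i_{\bm S}(S)$. One then shows, by Noetherian induction on $\Supp\bigl(\Image(\omega_S \to \omega_R)/\tau(\omega_R)\bigr)$, that a \emph{single} finite separable extension kills the largest proper $F$-stable submodule $N \subseteq H^{\dim R}_{\bm}(R)$; local duality ($N^\vee = \omega_R/\tau(\omega_R)$) then gives the Gorenstein case, and the Schwede--Tucker transformation theorem $\Tr\bigl(f_*\tau(Y, f^*\Delta - \Ram_{Y/X})\bigr) = \tau(X,\Delta)$ bootstraps the general one. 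Your proposal replaces this with ``build $Y$ by adjoining, in a tame way, roots along $(p^E-1)(K_X+\Delta)$ and $\divisor(c)$ \ldots tuned so the trace surjects onto $\tau$,'' but tame cyclic covers along $\Delta$ and $\divisor(c)$ only absorb fractional coefficients and normalize the index---they are the \emph{reduction} step, not the existence step. In particular, in the already-Gorenstein case with $\Delta = 0$ your prescription produces no cover at all beyond one branched along $\divisor(c)$, and there is no reason such a cover makes the trace image equal $\tau(R)$ rather than something smaller. You flag this yourself (``a second genuine difficulty \ldots the cofinality then has to be established by hand, the latter amounting to producing, for each generator of $\tau$, a cover ramified enough to see it'')---but that cofinality \emph{is} the theorem, and your sketch contains no mechanism for establishing it.

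Concretely, the gap is: you never explain why any finite separable cover should enlarge the trace image past its initial value, let alone up to all of $\tau$. Purely inseparable (Frobenius) and tame cyclic covers do not do this in general. The passage from ``$\tau$ is a finite sum of images of Cartier-linear maps'' to ``those images are captured by the trace of a finite separable cover'' requires the local-cohomological argument; without it, the claim that trace images are directed with union $\tau(X,\Delta)$ is unsupported. I would recommend rebuilding the argument around the Huneke--Lyubeznik lemma and the Noetherian induction on the support of $J_S/\tau(\omega_R)$, then layering the $\bQ$-Cartier reduction and the transformation-under-finite-maps theorem on top of it, as the paper does.
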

Indeed, one can also replace $f : Y \to X$ to be a (sufficiently large) regular alteration if you'd like $Y$ to be smooth.

Let us do a couple exercises in order to prove this result.  First we need a lemma of \cite{HunekeLyubeznikAbsoluteIntegralClosure} and generalized in \cite{SannaiSinghGaloisExtensions} which we will take as given.

\begin{lemma}
Suppose that $(R, \bm)$ is a local ring such that the Frobenius map is finite.  Suppose we are given an element $z \in H^i_{\bm}(R)$ such that the submodule of $H^i_{\bm}(R)$ generated by $\{ z, z^p, z^{p^2}, \ldots \}$ is finitely generated.  Then there exists a finite (separable) ring extension $R \subseteq S$ such that $H^i_{\bm}(R) \to H^i_{\bm S}(S)$ sends $z$ to zero.
\end{lemma}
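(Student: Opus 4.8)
The plan is to convert the finiteness hypothesis into a monic polynomial equation and then kill $z$ by adjoining a root of it. Write $F$ for the natural (Frobenius) $p$-linear operator on $H^i_{\bm}(R)$, so that $z^{p^j}=F^j(z)$. Since $R$ is noetherian, the ascending chain of $R$-submodules $Rz\subseteq Rz+Rz^p\subseteq Rz+Rz^p+Rz^{p^2}\subseteq\cdots$ sits inside the finitely generated $R$-module $\sum_{j\geq 0}Rz^{p^j}$ and therefore stabilizes, so there are an integer $n\ge 1$ and elements $a_0,\dots,a_{n-1}\in R$ with
\[
z^{p^n}=a_{n-1}z^{p^{n-1}}+\cdots+a_1 z^{p}+a_0 z \qquad\text{in } H^i_{\bm}(R).
\]
I would also record at the outset that for a module-finite extension $R\subseteq S$ the ideal $\bm S$ has radical equal to the Jacobson radical of $S$, so $H^i_{\bm S}(S)=H^i_{\bm}(S)$ and the map in the statement is simply the one induced by $R\to S$; hence it is enough to produce a module-finite, separable $S$ over which the class $z$ dies.

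Next I would construct $S$ from a \v{C}ech cocycle $\zeta$ representing $z$ on a generating set of $\bm$. The displayed relation says precisely that $\zeta^{[p^n]}-\sum_j a_j\zeta^{[p^j]}=\delta\eta$ for some \v{C}ech $(i-1)$-cochain $\eta$, and the device is to adjoin finitely many new variables to $R$ subject to monic additive-polynomial-type equations built from the entries of $\zeta$ and $\eta$, chosen so that $\zeta$ becomes a coboundary over the resulting ring; since those equations are monic, $S$ is module-finite over $R$. The mechanism is transparent in the model case $i=\dim R$ with $R$ a complete local domain: there $z$ is represented by a fraction with denominator a power of a fixed system of parameters $\underline x$, and the relation forces that fraction, call it $u\in R_{\underline x}$, to satisfy a monic equation $u^{p^n}-a_{n-1}u^{p^{n-1}}-\cdots-a_0 u-g=0$ for some $g\in R$; then $S:=R[u]$ is finite over $R$, and since $u\in S$ the representing fraction of $z$ now lies in $S$, so $z$ maps to $0$ in $H^i_{\bm}(S)$. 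The general $i$ is handled by the same idea run through the \v{C}ech complex.

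The hard part will be separability, which is exactly what the trace-map application in \autoref{thm.TestAlterations} needs. The polynomial $T^{p^n}-\sum_j a_j T^{p^j}-g$ has $T$-derivative $-a_0$, so the extension it defines is separable precisely when $a_0$ is a nonzerodivisor; if $a_0$ fails this (for instance if the relation has no linear term), the naive construction is inseparable and must be reshaped into one with separable defining equation — for example by replacing $z$ by $z-\lambda z^{p^m}$ for a generic unit $\lambda$ and a suitable $m$ before re-deriving the relation, or by passing to an Artin–Schreier-type equation $T^{p^e}-T-g$. Making this reshaping work in general codimension, and tracking it through the \v{C}ech bookkeeping, is precisely the refinement of the Huneke–Lyubeznik construction carried out in \cite{SannaiSinghGaloisExtensions}; I expect it to be the only genuinely delicate point, the module-finiteness of $S$ and the vanishing of $z$ over $S$ being formal once the right equation is in place.
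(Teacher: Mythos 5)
Note first that the paper does not prove this lemma: the sentence immediately preceding it reads ``First we need a lemma of \cite{HunekeLyubeznikAbsoluteIntegralClosure} and generalized in \cite{SannaiSinghGaloisExtensions} which we will take as given,'' so there is no in-paper argument for you to match.

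Your sketch does track the strategy of those references. The opening reduction is right: since $\sum_j Rz^{p^j}$ is finitely generated over the Noetherian ring $R$, its partial sums stabilize and you get a $p$-monic relation $z^{p^n}=\sum_{j<n}a_jz^{p^j}$. Replacing $H^i_{\bm S}(S)$ by $H^i_{\bm}(S)$ is also fine, since for module-finite $S$ the two ideals have the same radical. And you have correctly located the crux: the $T$-derivative of $T^{p^n}-\sum_j a_jT^{p^j}-g$ is $-a_0$, so the naive extension is inseparable whenever $a_0$ is a zerodivisor, and repairing this is precisely the content of \cite{SannaiSinghGaloisExtensions}.

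The one place the sketch overreaches is the ``model case.'' If $u$ is a \v{C}ech representative of $z$ in $R_{x_1\cdots x_d}$, the relation only tells you that $u^{p^n}-\sum_j a_ju^{p^j}$ lies in the image of $\sum_i R_{x_1\cdots\hat{x}_i\cdots x_d}$, not in $R$, so the constant term $g$ of your would-be monic polynomial is not an element of $R$ without further normalization of the representative. What Huneke--Lyubeznik actually do is adjoin roots of $p$-monic equations for the \emph{entries of the coboundary cochain} $\eta$ appearing in $\zeta^{[p^n]}-\sum_j a_j\zeta^{[p^j]}=\delta\eta$, manufacturing a cochain $\mu$ over a finite $S$ with $\mu^{[p^n]}-\sum_j a_j\mu^{[p^j]}=\eta$ entrywise, and then use integrality together with a normality reduction to conclude that $\zeta-\delta\mu$ is itself a coboundary over $S$. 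Your phrase ``run through the \v{C}ech complex'' gestures at this, but that bookkeeping is where the proof actually lives. As a guide to the cited literature your plan is sound; as a self-contained proof, those are the details that still need to be supplied.
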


\begin{exercise}[Hard]
Suppose that $R$ is a domain, let $N \subseteq H^{\dim R}(R)$ be the largest proper submodule of $H^{\dim R}(R)$ such that $F(N) \subseteq N$.  Show that there exists a finite extension of rings $R \subseteq S$ such that $N$ goes to 0 in $H^{\dim R}_{\bm}(R) \to H^{\dim R}_{\bm S}(S)$.
\vskip 3pt
\Hint{First show that $\ker (H^{\dim R}_{\bm}(R) \to H^{\dim R}_{\bm S}(S))$ is a submodule of $N$.  Then, using local duality, observe that $N^{\vee} = \omega_R / \tau(\omega_R)$ where $\tau(\omega_R)$ is the smallest non-zero submodule of $\omega_R$ such that $T^e(F^e_* \tau(\omega_R)) \subseteq \tau(\omega_R)$.

Suppose that $R \subseteq S$ is a finite extension of domains and $J_S = \Image(\omega_S \to \omega_R)$.  Show that there is a finite extension of domains $S \subseteq S'$ such that the support of $J_{S'} / \tau(\omega_R)$ is strictly smaller than support of $J_S / \tau(\omega_R)$.  Now proceed by Noetherian induction.}
\end{exercise}

By local duality, the above result proves the theorem in the case that $\omega_R \cong R$ and $\Delta = 0$.

To obtain the more general theorem, one has to understand the behavior of test ideals under finite maps.  We'll talk about this shortly, but first we highlight a more general question.

\begin{question}
Suppose $(X, \Delta)$ is a pair but do \emph{not} assume that $K_X + \Delta$ is $\bQ$-Cartier.  Does there exist a finite separable map (or alteration) such that $\Tr_{Y/X}( f_* \O_Y( \lceil K_Y - \pi^* (K_X + \Delta) \rceil) = \tau(X, \Delta)$?
\end{question}

This is a generalization of the biggest open question in tight closure theory (characteristic $p > 0$ commutative algebra).  Let us state this another version of this question.

\begin{exercise}
Suppose an affirmative answer to the above question.  Suppose that $X = \Spec R$ and $\tau(X, 0) = \O_X$ (in other words, that $X$ is $F$-regular).  Show that for any finite extension of $R \subseteq S$ we have that $S \cong R \oplus M$ as $R$-modules.  In other words that $R \hookrightarrow S$ splits as a map of $R$-modules.  This was proven for $\bQ$-Gorenstein rings first in \cite{SinghQGorensteinSplinters}.
\end{exercise}

Let us now introduce the other machinery needed to prove \autoref{thm.TestAlterations}.

\begin{theorem}\cite{SchwedeTuckerTestIdealFiniteMaps}
Suppose that $R \subseteq S$ is a finite separable extension of normal $F$-finite\footnote{Meaning the Frobenius map is finite.} domain.  Set $X = \Spec R$ and $y = \Spec S$.  Then for any $\bQ$-divisor $\Delta$ on $X$ we have
\[
\Tr( f_* \tau(Y, f^* \Delta - \Ram_{Y/X})) = \tau(X, \Delta).
\]
\end{theorem}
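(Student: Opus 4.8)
The plan is to compare the Frobenius--linear (Cartier) maps that compute the test ideal on $(X,\Delta)$ with those that compute it on $(Y,f^*\Delta-\Ram_{Y/X})$, the comparison being implemented by $\psi\mapsto \Tr_{Y/X}\circ f_*\psi$. Once this ``transposition'' is available, the asserted equality drops out of the characterization (\autoref{def.TauDefinition}) of the test ideal as the smallest nonzero ideal stable under all the relevant maps, together with the concrete description $\tau(X,\Delta)=\sum_{e\ge 0}\sum_{\phi}\phi(F^e_*(c\,\O_X))$ for a test element $c$.

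First I would assemble the duality package. Since $f$ is finite and $X,Y$ are normal, the relative dualizing sheaf is $\omega_{Y/X}=\omega_Y\otimes_{\O_Y}f^*\omega_X^{-1}=\O_Y(\Ram_{Y/X})$, and $f^!\O_X=\omega_{Y/X}$ with counit the trace $\Tr_{Y/X}\colon f_*\omega_{Y/X}\to\O_X$. The structural input is functoriality of the Grothendieck trace under composition of finite morphisms, applied to the two equal morphisms $f\circ F^e_Y=F^e_X\circ f\colon Y\to X$: this yields
\[
\Tr_{F^e_X}\circ F^e_*\!\left(\Tr_{Y/X}\right)\;=\;\Tr_{Y/X}\circ f_*\!\left(\Tr_{F^e_Y}\right)
\]
as maps $F^e_* f_*\omega_Y\to\omega_X$, where $\Tr_{F^e_X}\colon F^e_*\omega_X\to\omega_X$ and $\Tr_{F^e_Y}\colon F^e_*\omega_Y\to\omega_Y$ are the traces of Frobenius discussed earlier. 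This is the precise sense in which $\Tr_{Y/X}$ intertwines the two Frobenius--Cartier structures, and it is exactly the appearance of $\omega_{Y/X}=\O_Y(\Ram_{Y/X})$ here that forces the correction $-\Ram_{Y/X}$ into the statement: pulling the $\Delta$--twisted Cartier structure on $\O_X$ back along $f$ twists it by $f^!$, i.e.\ by $\O_Y(\Ram_{Y/X})$, and $K_Y+(f^*\Delta-\Ram_{Y/X})=f^*(K_X+\Delta)$.

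Next I would untwist the displayed identity by $\Delta$, using \autoref{prop.MapDivisorCorrespondence} and Grothendieck duality for $F^e$: on $X$ the space $\Hom_{\O_X}(F^e_*\O_X(\lceil(p^e-1)\Delta\rceil),\O_X)$ is sections of $F^e_*\O_X(\lfloor(1-p^e)(K_X+\Delta)\rfloor)$, while on $Y$ the space $\Hom_{\O_Y}(F^e_*\O_Y(\lceil(p^e-1)(f^*\Delta-\Ram_{Y/X})\rceil),\O_Y)$ is sections of $F^e_*\O_Y(\lfloor(1-p^e)f^*(K_X+\Delta)\rfloor)$; this latter description continues to make sense, and is what one uses, even when $f^*\Delta-\Ram_{Y/X}$ fails to be effective, which is why the left side of the theorem is well posed (there $\tau(Y,\blank)$ is interpreted as a fractional ideal in the usual way). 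There is a canonical trace--type $\O_X$--linear map $\Omega_e\colon f_*\O_Y(\lfloor(1-p^e)f^*(K_X+\Delta)\rfloor)\to\O_X(\lfloor(1-p^e)(K_X+\Delta)\rfloor)$ --- it exists because $\lfloor f^*\Xi\rfloor\ge f^*\lfloor\Xi\rfloor$ for every $\bQ$--divisor $\Xi$ together with $\Ram_{Y/X}\ge 0$ --- and the transposition sends $\psi$ to $\Omega_e(\Tr_{Y/X}(f_*\psi))$, compatibly with the identity above. The crucial claim is that this assignment is surjective onto $\Hom_{\O_X}(F^e_*\O_X(\lceil(p^e-1)\Delta\rceil),\O_X)$: here separability is indispensable, since it is equivalent to the field trace $\Tr_{S/R}$ generating $\Hom_R(S,R)$ as an $S$--module (nondegeneracy of the trace form), so that every $\phi$ on $X$ is accounted for. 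Verifying that $-\Ram_{Y/X}$ is precisely the divisor making every rounding along the way line up, uniformly in $e$, is the heart of the matter, and I expect this divisor/rounding bookkeeping to be the main obstacle.

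Granting the transposition, both inclusions are formal. For $\tau(X,\Delta)\subseteq \Tr_{Y/X}(f_*\tau(Y,f^*\Delta-\Ram_{Y/X}))$ I would expand $\tau(X,\Delta)=\sum_{e,\phi}\phi(F^e_*(c\,\O_X))$, choosing the test element $c\in R$ so that it also serves as a test element for $(Y,f^*\Delta-\Ram_{Y/X})$ --- a standard persistence property of module--finite extensions of domains, possibly after multiplying $c$ by an equation of the branch locus --- write each $\phi$ as the transposition of some $\psi$, and observe $\phi(F^e_*(c\,\O_X))=\Tr_{Y/X}\bigl(f_*\,\psi(F^e_*(c\,\O_Y))\bigr)\subseteq\Tr_{Y/X}(f_*\tau(Y,f^*\Delta-\Ram_{Y/X}))$. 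The reverse inclusion is the same computation run starting from the generator description of $\tau(Y,f^*\Delta-\Ram_{Y/X})$, applying $\Tr_{Y/X}\circ f_*$ to each generator and using the transposition (through $\Omega_e$) to recognize the result as $\phi(F^e_*(c\,\O_X))$ for a legitimate $\phi$ on $X$ and the same $c$. Nonvanishing of the left side, needed to match the minimality built into $\tau(X,\Delta)$, follows from generic surjectivity of $\Tr_{Y/X}$ (separability once more), so the two sides agree. As a corollary one recovers \autoref{thm.TestAlterations}: take $\Delta=0$ and a cover (or alteration) $Y$ large enough that $(Y,-\Ram_{Y/X})$ has the expected test ideal $\omega_{Y/X}$.
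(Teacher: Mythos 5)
Your architecture --- the commutative square coming from functoriality of the Grothendieck trace for $f\circ F^e_Y = F^e_X\circ f$, untwisted through duality, with the transposition $\psi\mapsto\phi$ and the test-element expansions of $\tau$ on both sides --- is exactly the paper's intended route; the hint attached to the paper's exercise is precisely this square, and the $-\Ram_{Y/X}$ correction plays exactly the role you assign it. Where your proposal goes beyond the paper's exercise (which only treats $(p^e-1)(K_X+\Delta)\sim 0$ with $f^*\Delta-\Ram_{Y/X}\ge 0$) is in the rounding bookkeeping, and that is the right place to focus.

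However, one assertion you offer as the justification for surjectivity of the transposition is wrong, so it is a genuine gap. Separability is \emph{not} equivalent to $\Tr_{S/R}$ generating $\Hom_R(S,R)$ as an $S$-module: under Grothendieck duality $\sHom_{\O_X}(f_*\O_Y,\O_X)\cong f_*\O_Y(\Ram_{Y/X})$, and $\Tr_{Y/X}$ corresponds to the section $1$, whose $S$-span is only $f_*\O_Y\subsetneq f_*\O_Y(\Ram_{Y/X})$ whenever $f$ ramifies. Separability yields only the generic (fraction-field) nondegeneracy. The surjectivity of the transposition $\psi\mapsto\Omega_e(\Tr_{Y/X}(f_*\psi))$ is in fact true, but the reason is precisely the extra room created by twisting the Cartier structure on $Y$ by $-\Ram_{Y/X}$ --- the inequality $\Ram_{Y/X}+f^*\lfloor\Xi\rfloor\ge\lfloor f^*\Xi\rfloor$ you mention is where the slack comes from, not from any module-generation property of $\Tr$. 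You flag the rounding/divisor comparison as the ``heart of the matter,'' which is the right instinct, but the heuristic you give to cover it is the very part that needs replacing by that comparison. The remaining points (the common test element, the reverse inclusion, and recovering \autoref{thm.TestAlterations}, which additionally needs the Huneke--Lyubeznik/Sannai--Singh input and an auxiliary cover killing the index of $K_X$) are sketched at about the same level of detail as the paper's own exercises.
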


We prove the theorem in a special case.

\begin{exercise}
Prove the above theorem in the case that $\Delta \geq 0$, $(p^e - 1)(K_X + \Delta) \sim 0$ and that $f^* \Delta - \Ram_{Y/X}$ is effective.
\vskip 3pt
\Hint{There is a diagram
\[
\xymatrix{
f_* F^e_* \omega_Y \ar[r] \ar[d] & f_* \omega_Y \ar[d] \\
F^e_* \omega_X \ar[r] & \omega_X
}
\]
twist it appropriately and recall that $\Ram_{Y/X} = K_Y - f^* K_X$.}
\end{exercise}

\begin{exercise}
Prove \autoref{thm.TestAlterations} using the above methods.
\vskip 3pt
\Hint{You can use the fact that given an $\bQ$-Cartier divisor $\Gamma$ on $X$, there exists a finite separable cover $f : Y \to X$ such that $f^* \Gamma$ is Cartier.  You can also use the fact that if $D$ is Cartier, then $\tau(X, D) = \tau(X) \otimes \O_X(-D)$.}
\end{exercise}




\subsection{Other types of singularities and deformation thereof}

We introduce other types of singularities.

Note that for any variety $X$, with dualizing complex $\omega_X^{\mydot}$ we have $T : F_* \omega_X^{\mydot} \to \omega_X^{\mydot}$ dual to $\O_X \to F_* \O_X$.

\begin{definition}[$F$-injective and $F$-rational singularities]
Suppose that $X$ is a variety with dualizing complex $\omega_X^{\mydot}$.  We say that $X$ is \emph{$F$-injective} if the canonical maps $\myH^i(F_* \omega_X^{\mydot}) \to \myH^i \omega_X^{\mydot}$ surject\footnote{By local duality, this is the same as requiring that $H^i_x(\O_{X,x}) \to H^i_x(F_* \O_{X,x})$ inject for all $i$.} for all $i$.

We say that $X$ is \emph{$F$-rational} if $X$ is Cohen-Macaulay and if, locally, for every effective Cartier divisor $D$ we have that $F^e_* (\omega_X \otimes \O_X(-D)) \to \omega_X$ surjects for some $e > 0$.
\end{definition}

\begin{exercise}
Show that $X$ is $F$-rational if and only if $X$ is Cohen-Macaulay and if the only nonzero submodule $J \subseteq \omega_X$ such that $T(F_* J) \subseteq J$ is $\omega_X$. \end{exercise}

\begin{definition}
Recall that $X$ is \emph{pseudo-rational} if and only if it is Cohen-Macaulay and for any proper birational map $\pi : Y \to X$ we have that $\pi_* \omega_Y = \omega_X$.
\end{definition}

\begin{exercise}
Prove that $F$-rational singularities are pseudo-rational.
\vskip 3pt
\Hint{Use the previous exercise and show that $T(F_* \pi_* \omega_Y) \subseteq \pi_* \omega_Y$.}
\end{exercise}

\begin{theorem}
\label{thm.FInjectiveDeforms}
Suppose $(R, \bm)$ is a local ring and $f \in R$ is a non-zero divisor.  Then if $R/f$ is $F$-rational, so is $R$.  Likewise if $R/f$ is $F$-injective and Cohen-Macaulay, so is $R$.
\end{theorem}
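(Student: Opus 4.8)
The plan is to reduce both statements to a surjectivity/minimality property of the Frobenius trace on the canonical module, and then to propagate that property across the hypersurface section $V(f)$.

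First I would note that in either case $R$ itself is Cohen--Macaulay: $R/f$ is Cohen--Macaulay (by hypothesis, respectively because $F$-rational rings are Cohen--Macaulay by definition) and $f$ is a nonzerodivisor, so $\depth R = \depth(R/f) + 1 = \dim(R/f) + 1 = \dim R$. Hence the dualizing complex $\omega_X^{\mydot}$ of $X = \Spec R$ is concentrated in one degree, where it is the canonical module $\omega_R$ (a maximal Cohen--Macaulay module), and $T \colon F_*\omega_X^{\mydot} \to \omega_X^{\mydot}$ is the Frobenius trace $T_R \colon F_*\omega_R \to \omega_R$. By the definition of $F$-injectivity, $R$ is $F$-injective if and only if $T_R$ is surjective; by the exercise characterizing $F$-rationality, $R$ is $F$-rational if and only if the only nonzero submodule $J \subseteq \omega_R$ with $T_R(F_* J) \subseteq J$ is $\omega_R$ itself (equivalently, the parameter test module $\tau(\omega_R)$, the smallest such nonzero $J$, equals $\omega_R$). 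The same reformulations apply to $R/f$, whose canonical module is $\omega_{R/f} \cong \omega_R/f\omega_R$ since $f$ is a nonzerodivisor on $\omega_R$ (the associated primes of $\omega_R$ lying among the minimal primes of $R$ of maximal dimension).

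The engine of the argument is the adjunction for the Frobenius trace along the Cartier divisor $\Div(f)$ --- the $F$-different computation of \autoref{sec:TraceOfFrob}. Concretely, the composite of $T_R$ with the quotient map $\pi\colon\omega_R\to\omega_R/f\omega_R$ annihilates $f\cdot F_*\omega_R = F_*(f^p\omega_R)$, hence descends to $\overline{T}_R \colon F_*(\omega_R/f^p\omega_R) \to \omega_R/f\omega_R$, and Grothendieck duality for the finite morphism $F$ identifies $T_{R/f}$ with $\overline{T}_R$ precomposed with multiplication by $f^{p-1}$, namely $F_*(\omega_R/f\omega_R)\xrightarrow{\,f^{p-1}\,} F_*(\omega_R/f^p\omega_R)$. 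I expect the careful bookkeeping of this identification (and of the $f^{p^e-1}$-twists that appear upon iterating) to be the main technical obstacle; the two deductions below are then essentially formal. One consequence I would record at once: if $J \subseteq \omega_R$ is $T_R$-compatible then $f^{p-1}J \subseteq J$ automatically, so $\overline{J} := (J + f\omega_R)/f\omega_R$ is a $T_{R/f}$-compatible submodule of $\omega_{R/f}$.

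For the $F$-injective case, suppose $R/f$ is $F$-injective and Cohen--Macaulay and put $I = \im T_R$. From the factorization, $\im T_{R/f} \subseteq \im \overline{T}_R = \pi(I) = (I + f\omega_R)/f\omega_R$; since $R/f$ is $F$-injective, $\im T_{R/f} = \omega_{R/f}$, so $I + f\omega_R = \omega_R$, and Nakayama's lemma gives $I = \omega_R$, i.e.\ $R$ is $F$-injective. For the $F$-rational case, suppose $R/f$ is $F$-rational; I claim $\tau(\omega_R) = \omega_R$. Here $R$ is reduced: any nilpotent lies in $(f)$ (as $R/f$ is reduced), hence in $\bigcap_n f^n R = 0$ by Krull's intersection theorem. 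Now $\tau(\omega_R)$ is nonzero and $T_R$-compatible, so $\overline{\tau(\omega_R)}$ is a $T_{R/f}$-compatible submodule of $\omega_{R/f}$. If $\overline{\tau(\omega_R)} \neq 0$ it contains $\tau(\omega_{R/f}) = \omega_{R/f}$ (the equality by $F$-rationality of $R/f$), so $\tau(\omega_R) + f\omega_R = \omega_R$ and Nakayama gives $\tau(\omega_R) = \omega_R$. If instead $\overline{\tau(\omega_R)} = 0$, then $\tau(\omega_R) \subseteq f\omega_R$; choosing a parameter test element $c \notin (f)$ --- which exists because $(f)$ is a height-one prime (as $R/(f)$ is an $F$-rational, hence normal, local domain) so that $R_{(f)}$ is a DVR, in particular regular --- one gets $c\omega_R \subseteq \tau(\omega_R) \subseteq f\omega_R$, and localizing at $(f)$, where $\omega_R$ becomes free of rank one, forces $c \in fR_{(f)} \cap R = (f)$, a contradiction. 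Hence $\tau(\omega_R) = \omega_R$, i.e.\ $R$ is $F$-rational.
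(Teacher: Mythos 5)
Your proposal is correct and, for the $F$-injective half, executes exactly what the paper's exercise hint asks for: apply the trace $T$ across the short exact sequence $0 \to \omega_R \xrightarrow{\cdot f} \omega_R \to \omega_{R/f} \to 0$, identify $T_{R/f}$ with the $f^{p-1}$-twist of the reduction of $T_R$, and conclude by Nakayama's lemma. The paper supplies no proof of the $F$-rational half; your argument there (pushing $\tau(\omega_R)$ to $\omega_{R/f}$ and either invoking minimality over the $F$-rational ring $R/f$ or localizing at the height-one prime $(f)$, where $R_{(f)}$ is a DVR, to rule out $\tau(\omega_R)\subseteq f\omega_R$) is correct and is the standard Fedder-style deformation argument within the same trace/parameter-test-module framework.
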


\begin{exercise}
Prove the $F$-injective half (the second statement) of the above theorem.
\vskip 3pt
\Hint{Consider the short exact sequence $0 \to \omega_R \xrightarrow{\cdot f} \omega_R \to \omega_{R/f} \to 0$ and determine how it is effected by Frobenius (or rather $T$).  Then use Nakayama's lemma.}
\end{exercise}

\begin{conjecture}
With notation as above, if $R/f$ is $F$-injective, then $R$ is $F$-injective.
\end{conjecture}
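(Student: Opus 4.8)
The plan is to run Fedder's deformation argument for $F$-injectivity in the full tower of local cohomology modules and to see exactly where the Cohen--Macaulay hypothesis is being used in the preceding exercise. First I would reduce to $(R,\mathfrak{m})$ complete: $R \to \widehat{R}$ is faithfully flat with $\widehat{R}/f\widehat{R} = \widehat{R/fR}$, and the natural Frobenius action on $H^i_{\mathfrak{m}}(R)$ is recovered from that on $H^i_{\mathfrak{m}}(\widehat{R})$, so both the hypothesis ``$R/fR$ is $F$-injective'' and the desired conclusion ascend and descend along completion. After this each $H^i_{\mathfrak{m}}(R)$ is an Artinian $R$-module with its natural ($p$-linear) Frobenius $F\colon H^i_{\mathfrak{m}}(R)\to H^i_{\mathfrak{m}}(R)$ satisfying $F(rx)=r^pF(x)$, and $R$ is $F$-injective precisely when $F$ is injective on $H^i_{\mathfrak{m}}(R)$ for every $i$.

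Next I would assemble the two long exact sequences. Applying $H^{\bullet}_{\mathfrak{m}}(-)$ to $0 \to R \xrightarrow{\cdot f} R \to R/fR \to 0$ gives
\[
\cdots \to H^{i-1}_{\mathfrak{m}}(R/fR) \xrightarrow{\ \delta\ } H^i_{\mathfrak{m}}(R) \xrightarrow{\cdot f} H^i_{\mathfrak{m}}(R) \xrightarrow{\ \rho\ } H^i_{\mathfrak{m}}(R/fR) \to \cdots,
\]
and, since the $p$-th power map on $R$ sends $fR$ into $f^pR\subseteq fR$, applying $F_*$ produces a second such sequence mapping to the first; all squares commute and, in particular, $\delta$ and $\rho$ are Frobenius-equivariant. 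Now assume $R$ is not $F$-injective and pick $i$ minimal with $0 \neq z \in H^i_{\mathfrak{m}}(R)$ and $F(z)=0$. As $f\in\mathfrak{m}$ and $H^i_{\mathfrak{m}}(R)$ is $\mathfrak{m}$-power torsion, some $f^a z \neq 0$ while $f^{a+1}z=0$; replacing $z$ by $f^a z$ (still killed by $F$, since $F(f^a z)=f^{ap}F(z)=0$) we may assume $fz=0$, so $z=\delta(\bar{w})$ for some $\bar{w}\in H^{i-1}_{\mathfrak{m}}(R/fR)$, and $z\neq 0$ means $\bar{w}\notin\operatorname{im}\bigl(\rho\colon H^{i-1}_{\mathfrak{m}}(R)\to H^{i-1}_{\mathfrak{m}}(R/fR)\bigr)$. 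Equivariance gives $\delta(F\bar{w}) = F(z) = 0$, so $F\bar{w}\in\ker\delta=\operatorname{im}\rho$: that is, $F\bar{w}$ lifts to $H^{i-1}_{\mathfrak{m}}(R)$. The goal is to turn this into an element of some $H^j_{\mathfrak{m}}(R/fR)$ killed by a power of $F$, contradicting $F$-injectivity of $R/fR$.

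That last step is the main obstacle, and it is essentially why the statement is still only a conjecture. When $R/fR$ is Cohen--Macaulay, $R$ is automatically Cohen--Macaulay (as $f$ is a non-zero divisor), so $H^j_{\mathfrak{m}}(R)=0$ for $j\neq d:=\dim R$; then $H^{d-1}_{\mathfrak{m}}(R)=0$ forces $\rho=0$, hence $\delta$ is injective, and $\delta(F\bar{w})=0$ yields $F\bar{w}=0$ with $\bar{w}\neq 0$ --- an immediate contradiction. This is exactly the preceding exercise, in Matlis-dual form via $0 \to \omega_R \xrightarrow{\cdot f} \omega_R \to \omega_{R/fR}\to 0$ and Nakayama. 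Without the Cohen--Macaulay hypothesis the relation ``$F\bar{w}$ lifts along $\rho$'' cannot be iterated: $\rho$ has kernel $f\cdot H^{i-1}_{\mathfrak{m}}(R)$, so the lift is non-unique, there is no a priori bound controlling $F^e\bar{w}$, and the $f$-torsion filtration on the whole tower $\{H^j_{\mathfrak{m}}(R)\}_j$ interacts with the Frobenius action in an uncontrolled way. I would therefore expect that one can push the argument through only under an auxiliary hypothesis that collapses the tower --- e.g.\ $R/fR$ generically Gorenstein, or $R$ Cohen--Macaulay on the punctured spectrum so that only the top two local cohomologies have infinite length --- and that the general case requires genuinely new input on the behavior of $F$-nilpotence under extension by a non-zero divisor, perhaps through the Matlis-dual picture in which $F$-injectivity becomes surjectivity of the Cartier/trace maps on the Ext-modules $\operatorname{Ext}^{\mydot}_R(R/fR,\omega_R^{\mydot})$ and one must understand those trace maps for an $R$ that is neither $\bQ$-Gorenstein nor Cohen--Macaulay.
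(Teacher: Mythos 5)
The statement you are asked about is not a theorem with a proof in the paper: it is explicitly labeled a \emph{Conjecture}, with the paper only recording partial progress (citing Horiuchi--Miller--Shimomoto) and the preceding exercise handling the case where $R/f$ is additionally assumed Cohen--Macaulay. You correctly recognize this, and your analysis of why the CM case works and why the general case resists the same argument is sound and matches the paper's surrounding discussion, so there is no hidden proof for you to have missed.

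One technical imprecision worth flagging. You assert that both $\rho$ and the connecting map $\delta$ in the long exact sequence of $0\to R\xrightarrow{\cdot f} R\to R/fR\to 0$ are Frobenius-equivariant. For $\rho$ this is correct, but $\delta$ is only \emph{twisted} equivariant. The $p$-th power map on $R$ sends $f$ to $f^p$, so the map of short exact sequences one actually gets, with the Frobenius $F$ on the middle term and the induced Frobenius $\bar F$ on $R/fR$, forces the left vertical map to be $r\mapsto f^{p-1}r^p$. Passing to local cohomology this gives $\delta\circ\bar F = f^{p-1}F\circ\delta$, not $\delta\circ\bar F = F\circ\delta$. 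Your subsequent conclusion still goes through — from $F(z)=0$ you get $\delta(\bar F\bar w) = f^{p-1}F(\delta \bar w) = f^{p-1}F(z)=0$ — but the intermediate equality ``$\delta(F\bar w)=F(z)$'' as written is not correct, and this twist by $f^{p-1}$ is precisely one of the things that makes the non-CM case delicate: it interacts with the $f$-torsion filtration on the $H^j_{\mathfrak m}(R)$, so the lift of $\bar F\bar w$ along $\rho$ that you obtain cannot be iterated in a controlled way, as you yourself observe.

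Your reduction to the complete local case, the use of $\mathfrak m$-power torsion to arrange $fz=0$, the dual Matlis picture via $0\to\omega_R\xrightarrow{\cdot f}\omega_R\to\omega_{R/f}\to 0$, and the conclusion that the Cohen--Macaulay hypothesis is what collapses $\rho$ to zero are all in line with the paper's exercise and hint. In short: you have not proved the conjecture — nobody has — but you have correctly located it as open, reproduced the known CM reduction, and accurately described the obstruction; just be careful to record the $f^{p-1}$-twist on $\delta$.
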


Some recent progress on this conjecture was made by  Jun Horiuchi, Lance Edward Miller and Kazuma Shimomoto \cite{HoriuchiMillerShimomotoFInjectiveDeforms}.  $F$-injective singularities seem to correspond closely to Du~Bois singularities in characteristic zero \cite{BhattSchwedeTakagiWeakOrdinarity} and it was recently shown that Du Bois singularities satisfy the condition of \autoref{thm.FInjectiveDeforms}, see \cite{KovacsSchwedeDBDeforms}.

One can also ask the same types of questions for $F$-pure singularities.  The analog \autoref{thm.FInjectiveDeforms} for $F$-pure singularities is known to hold if $R$ is $\bQ$-Gorenstein with index of $K_R$ not divisible by $p$.  It is also known to fail without the $\bQ$-Gorenstein hypothesis \cite{FedderFPureRat,SinghDeformationOfFPurity}.  This leaves one open target

\begin{question}
Suppose that $(R, \bm)$ is a local $\bQ$-Gorenstein ring and that $f \in R$ is a non-zero divisor such that $R/f$ is $F$-pure.  Is it true that $R$ is also $F$-pure?  One can ask also for a definition of $\sigma$ for non $\bQ$-Gorenstein rings that behaves well under restriction.
\end{question}


\subsection{Bertini theorems for $F$-rational and $F$-injective singularities}

We have already explored Bertini theorems for test ideals.  There is a somewhat less ambitious target which also may be reasonable.

\begin{question}
\label{quest.BertiniFInj}
Suppose that $X \subseteq \bP^n$ is a normal and $F$-injective (respectively $F$-rational) quasi-projective variety.  Then is $X \cap H$ also $F$-injective (respectively $F$-rational) for all general hyperplanes $H \subseteq \bP^n$?
\end{question}

The problem below, analogous to \autoref{quest.A1ForTau} seems to be the missing piece needed to prove that $F$-injective and $F$-rational singularities satisfy Bertini-type theorems.

\begin{question}
 If $f : U \to V$ is a flat family with regular (but not necessarily smooth) fibers and $V$ is $F$-rational (respectively normal and $F$-injective), is it true that $U$ is also $F$-rational (respectively $F$-injective)?
\end{question}

It turns out that this is not true for $F$-injectivity without the normality hypothesis \cite[Section 4]{Enescu2009}.  Even worse, \autoref{quest.BertiniFInj} has a negative answer for $F$-injectivity without the normality hypothesis.  Let us sketch the background necessary in order to understand this failure.

\begin{definition}
Suppose that $X = \Spec R$ is a variety over a field of characteristic $p > 0$.  We say that $X$ is \emph{weakly normal} if $z \in K(R)$ and $z^p \in R$ implies that $z \in R$.  We say that $X$ is \emph{\textnormal{WN1}} if it is weakly normal and the normalization morphism $\Spec S = X^{\textnormal{N}} \to X = \Spec R$ is unramified in codimension one.\footnote{Recall that an extension of local rings $(R, \bm) \subseteq (S, \bn)$ is \emph{unramified} if $\bm \cdot S = \bn$ and $R/\bm \subseteq S/\bn$ is separable.}
\end{definition}

In \cite{CuminoGrecoManaresiBertiniAndWeakNormality} the authors showed that if weakly normal surfaces that are not WN1 have general hyperplanes that are \emph{not} weakly normal.  If you combine this with the fact that $F$-injective singularities are weakly normal \cite{SchwedeFInjectiveAreDuBois}, all we must do in order to construct an $F$-injective quasi-projective variety whose general hyperplane is not $F$-injective is to find an $F$-injective but not WN1 surface.

\begin{exercise}
Suppose $k$ is an algebraically closed field of characteristic $p > 0$.  Consider $R = k[x,y]$ and let $I = \langle y (y-1) \rangle$ so that $V(I)$ is two lines.  Let $Y = \Spec k[t]$ and let $V(I) \to Y$ be the map which is the identity on one component but Frobenius on the other.  Let $S$ to be the pullback of the diagram of rings $\{ R \to R/I \leftarrow k[t] \}$ with maps as described above.  In other words, $\Spec S$ is the pushout of $\{ X\leftarrow V(I) \to Y \}$.

Show that $\Spec S$ is $F$-injective but not WN1.
\vskip 3pt
\Hint{To show that $\Spec S$ is not WN1 note that $X$ is its normalization.  To show that it is $F$-injective, analyze the short exact sequence $0 \to S \to R \oplus k[t] \xrightarrow{-} R/I \to 0$.\footnote{Here, the map denoted $-$ is the difference of the maps on each of the two components.}}
\end{exercise}

\bibliographystyle{skalpha}
\bibliography{master}

\end{document}